\documentclass[a4paper]{amsart}
\usepackage{graphicx}
\usepackage{amssymb}
\usepackage{amsmath}
\usepackage{amsthm}
\usepackage{amscd}
\usepackage{color}
\usepackage{colordvi}
\usepackage[all,2cell]{xy}
\usepackage{picture}
\usepackage{multirow}
\usepackage{makecell}
\usepackage{dsfont}
\usepackage{extarrows}
\usepackage{float}
\usepackage{mathtools}
\usepackage{CJKutf8}

\usepackage{amsfonts,enumerate,verbatim,mathtools,tikz,bm,tikz-cd,hyperref,comment}
\hypersetup{
	colorlinks=true,
	linkcolor=blue,
	citecolor=red,
	filecolor=magenta,
	urlcolor=cyan,
}
%\addtolength{\textwidth}{1.5 cm} \oddsidemargin 0.8cm
%\evensidemargin 0.8cm

\UseAllTwocells \SilentMatrices
\newtheorem{thm}{Theorem}[section]

\newtheorem{cor}[thm]{Corollary}
\newtheorem{lem}[thm]{Lemma}
\newtheorem{exm}[thm]{Example}

\newtheorem{pro}[thm]{Proposition}
\theoremstyle{definition}
\newtheorem{dfn}[thm]{Definition}
\theoremstyle{remark}
\newtheorem{rem}[thm]{\bf Remark}
\numberwithin{equation}{section}

%------module category

\DeclareMathOperator{\projdim}{\mathrm{proj.dim}}
\DeclareMathOperator{\injdim}{\mathrm{inj.dim}}

\DeclareMathOperator{\Ker}{\mathrm{Ker}}

%------morphism and arrow

\begin{document}
	\begin{CJK}{UTF8}{gbsn}
	\title[Cotorsion pairs in extensions of abelian categories ]{Cotorsion pairs in extensions of abelian categories}

	\author{Dongdong Hu}

\makeatletter
\@namedef{subjclassname@2020}{\textup{2020} Mathematics Subject Classification}
\makeatother

\subjclass[2020]{16B50, 16E30 , 18A25, 18G25}
\date{\today}

\thanks{E-mail: hudd@mail.ustc.edu.cn}
\keywords{Extension of category, Cotorsion pair, Hovey triple, Comma category, Morita context ring.}

\maketitle

	\begin{abstract}
		Let $\mathcal{B}$ be an abelian category with enough projective objects and enough injective objects and let $\mathcal{A}=\mathcal{B}\ltimes_\eta\mathsf{F}$ be an $\eta$-extension of $\mathcal{B}$. Given a cotorsion pair $(\mathcal{X},\;\mathcal{Y})$ in $\mathcal{B}$, we construct a cotorsion pair $(\prescript{\perp}{}{\mathsf{U}^{-1}(\mathcal{Y})},\;\mathsf{U}^{-1}(\mathcal{Y}))$ in $\mathcal{A}$ and a cotorsion pair $(\Delta(\mathcal{X}),\;\Delta(\mathcal{X})^\perp)$ in $\mathcal{A}$ for $\mathsf{F}^2=0$. In addition, the heredity and completeness of these cotorsion pairs are studied. Finally, we give some applications and examples in comma categories, some Morita context rings and trivial extensions of rings.
		
	\end{abstract}
	\maketitle
	%\tableofcontents
	%%%%%%%%%%%%%%%%%%%%
	%%%%%%%%%%%%%%%%%%%%
	%%%%%%%%%%%%%%%%%%%%
	%----Section
	\section{Introduction}
	
	Cotorsion pairs, which were introduced in \cite{D}, are crucial in relative homological algebra and model structures; see \cite{EJ,GJ,H}.  A cotorsion pair consists of two classes of objects that are orthogonal with respect to the Ext functor.
	
	 Cotorsion pairs in different fields have been the focus of intense research. Cotorsion pairs in the comma categories  are studied in \cite{HW,HZ}. In \cite{CRZ}, the authors investigated cotorsion pairs and model structures on some Morita context rings, while those on the special case of formal triangular matrix rings have been studied in \cite{DPZ,Mao}. For more about the construction of cotorsion pairs, see \cite{DLLX,EPZ,HJ,Mao2,O}.
	
 An $\eta$-extension category $\mathcal{B}\ltimes_\eta\mathsf{F}$ of an abelian category $\mathcal{B}$ by a right exact functor $\mathsf{F}:\mathcal{B}\rightarrow\mathcal{B}$ was introduced in \cite{M}, where $\eta:\mathsf{F}^2\rightarrow\mathsf{F}$ is an associative natural transformation; see Subsection~\ref{subsec2.1}. In addition, there is a dual concept called $\zeta$-coextensions; see Subsection~\ref{subsec2.2} for details. Throughout this paper, we assume that the abelian category $\mathcal{B}$ has enough projective objects and enough injective objects. 	We are concerned with cotorsion pairs in some extensions of abelian categories. More precisely, we describe how to construct cotorsion pairs in an extension category of $\mathcal{B}$ from cotorsion pairs in $\mathcal{B}$. The motivation is the result in \cite{CRZ}, which studied cotorsion pairs and model structures on Morita context rings. We mention other related results in \cite{DPZ,Mao2,HW}. The first result of this paper is a vast generalization of the mentioned results; see Theorem~A.
 
 Let  $\mathcal{A}$ be an abelian category with enough projective objects and enough injective objects.
 For a class $\mathcal{X}$ of objects of $\mathcal{A}$, let
 \[\begin{aligned}&\prescript{\perp}{}{\mathcal{X}}=\{Y\in\mathcal{A}\;|\;\mathrm{Ext}_\mathcal{A}^1(Y,X)=0\mbox{ for all }X\in\mathcal{X}\},\\
 	&\mathcal{X}^\perp=\{Y\in\mathcal{A}\;|\;\mathrm{Ext}_\mathcal{A}^1(X,Y)=0\mbox{ for all }X\in\mathcal{X}\}.
 \end{aligned}\]
 
 Let $\mathcal{B}$ be an abelian category. For a class $\mathcal{X}$ of objects in $\mathcal{B}$, two classes $\mathsf{U}^{-1}(\mathcal{X})$ and $\Delta(\mathcal{X})$ are defined; see Subsection~\ref{3.1}. For an additive functor $\mathsf{F}$ between abelian categories, let $\{\mathbb{L}_n\mathsf{F}\}_{n\in\mathbb{Z}}$ be the left derived functors of $\mathsf{F}$. Constructions of (hereditary) cotorsion pairs in $\mathcal{B}\ltimes_\eta\mathsf{F}$ are given as follows. 
 
 \vskip 5pt
 
 \noindent {\bf Theorem\;A}\;(=\;Theorems~\ref{thm1} and \ref{thm2}).\;\emph{ Let $\mathcal{B}$ be an abelian category, and let $\mathcal{B}\ltimes_\eta\mathsf{F}$ be an $\eta$-extension of $\mathcal{B}$. Assume that $(\mathcal{X},\;\mathcal{Y})$ is a cotorsion pair in $\mathcal{B}$.
 	\begin{enumerate}[(1)]
 		\item If $\mathbb{L}_1\mathsf{F}(\mathcal{X})=0$,  then $(\prescript{\perp}{}{\mathsf{U}^{-1}(\mathcal{Y})},\mathsf{U}^{-1}(\mathcal{Y}))$ is a cotorsion pair in $\mathcal{B}\ltimes_\eta\mathsf{F}$; and moreover, it is hereditary if and only if so is $(\mathcal{X,Y})$. 
 		\item If $\mathsf{F}^2=0$, then $(\Delta(\mathcal{X}),\;\Delta(\mathcal{X})^\perp)$ is a cotorsion pair in $\mathcal{B}\ltimes\mathsf{F}$. Moreover, if $\mathbb{L}_1\mathsf{F}(\mathcal{X})=0$, then $(\Delta(\mathcal{X}),\Delta(\mathcal{X})^\perp)$ is hereditary if and only if so is $(\mathcal{X,Y})$.
 	\end{enumerate}
 }

It is typically difficult to describe  the classes $\prescript{\perp}{}{\mathsf{U}^{-1}(\mathcal{Y})}$  and $\Delta(\mathcal{X})^\perp$. Under certain conditions, we characterize $\prescript{\perp}{}{\mathsf{U}^{-1}(\mathcal{Y})}$  and $\Delta(\mathcal{X})^\perp$ in Theorem~\ref{thm4}. We  emphasize that  $(\prescript{\perp}{}{\mathsf{U}^{-1}(\mathcal{Y})},\mathsf{U}^{-1}(\mathcal{Y}))\ne (\Delta(\mathcal{X}),\Delta(\mathcal{X})^\perp)$ in general; see \cite[Example~4.3]{CRZ}. In some special cases, these pairs may coincide; see Theorems~\ref{thm4}, \ref{thm6} and \ref{thm7} and \cite[Proposition~3.7]{DPZ}.

When the abelian category $\mathcal{B}$ is Frobenius, its trivial extension retains good properties. Applying Theorem~A to the cotorsion pair $(\mathcal{B},\;\prescript{}{\mathcal{B}}{\mathcal{I}})$, we obtain Theorem~\ref{thm6}. Although cotorsion pairs in Theorem~\ref{thm6} are precisely the Gorenstein projective cotorsion pairs, we establish a characterization of Gorenstein projective objects in $\mathcal{B}\ltimes\mathsf{F}$.  We mention that Theorem~\ref{thm6} is a categorical version of \cite[Theorem~4.6]{CRZ}. For more results about Gorenstein projective objects, see \cite{HLXZ,K,LZ,Luo-Z,Z}.

In cotorsion theory, completeness of cotorsion pairs plays a crucial role. This is evident not only in the theory itself, but also in finding model structures; see  \cite{BR,CLZ,H}. The following result demonstrates the completeness of the cotorsion pair in Theorem~A.

\vskip 6pt

\noindent {\bf Theorem\;B}\;(=\;Theorem~\ref{thm5}).\;\emph{ Let $\mathcal{B}\ltimes\mathsf{F}$ be a right trivial extension of an abelian category $\mathcal{B}$. Assume that $(\mathcal{X,Y})$ is a hereditary complete cotorsion pair. If $\mathbb{L}_1\mathsf{F}(\mathcal{X})=0$ and $\mathsf{F}(\mathcal{X})\subseteq\mathcal{Y}$, then $(\prescript{\perp}{}{\mathsf{U}^{-1}(\mathcal{Y})},\mathsf{U}^{-1}(\mathcal{Y}))$ is a hereditary complete cotorsion pair.
}

\vskip 6pt

The key to proving Theorem~B lies in Proposition~\ref{cover} and Lemma~\ref{can-seq}. In particular, for $\mathsf{F}^2=0$, $(\Delta(\mathcal{X}),\Delta(\mathcal{X})^\perp)$ is a cotorsion pair in $\mathcal{B}\ltimes\mathsf{F}$ by Theorem~A. Combining Theorem~\ref{thm4} and Theorem~B, we obtain that $(\Delta(\mathcal{X}),\Delta(\mathcal{X})^\perp)$ is complete; see Corollary~\ref{cor1}. As an application of Theorem~B, we construct a hereditary Hovey triple and a contravariantly finite cotorsion pair in $\mathcal{B}\ltimes\mathsf{F}$; see Theorems~\ref{thm9} and~\ref{thm10}.

We mention that some examples of extensions of abelian categories are summarised in \cite{B}. For example, the module category of a Morita context ring is a right trivial extension of an abelian category. Theorem~A recovers \cite[Theorems~1.1 and~1.2]{CRZ}. Another example is the comma category. As an application, we studied cotorsion pairs and model structures in comma categories; see Section~\ref{sec4} for details.​​

The paper is structured as follows. In Section~\ref{sec2}, we recall some basic concepts. Subsections~\ref{subsec2.1} and \ref{subsec2.2} focus on extensions of abelian categories, detailing their basic notions and properties. Subsequently, we review the theory of cotorsion pairs, model structures, and Gorenstein categories. In Section~\ref{sec3}, we explicitly construct cotorsion pairs in extensions of abelian categories; see Theorems~\ref{thm1} and ~\ref{thm2}. We also study the completeness of the cotorsion pair; see Theorem~\ref{thm5}. Finally, we give some applications and examples in comma categories, some Morita context rings and trivial extensions of rings in Section~\ref{sec4}.

	\section{Preliminaries}\label{sec2}

\subsection{$\eta$-extensions of abelian categories}\label{subsec2.1} We recall from \cite{M} (see also \cite{B}) the construction of the $\eta$-extension of an abelian category. Let $\mathcal{B}$ be an abelian category with enough projective objects and enough injective objects, let $\mathsf{F}:\mathcal{B}\rightarrow\mathcal{B}$ be a covariant, additive, right exact functor and $\eta:\mathsf{F}^2\rightarrow\mathsf{F}$ a natural transformation. Assume that $\eta$ is \emph{associative}, that is, $\eta\circ\mathsf{F}\eta=\eta\circ\eta\mathsf{F}$, i.e. the following diagram commutes:
\[\begin{tikzcd}
	\mathsf{F}^3 \arrow[d, "\eta\mathsf{F}"'] \arrow[r, "\mathsf{F}\eta"] & \mathsf{F}^2 \arrow[d, "\eta"] \\
	\mathsf{F}^2 \arrow[r, "\eta"]                                        & \mathsf{F}                    
\end{tikzcd}\]

 We denote by $\mathcal{B}\ltimes_\eta\mathsf{F}$ the category (called an \emph{$\eta$-extension} of $\mathcal{B}$) defined as follows: The objects are pairs $(X,f)$, where $f\in\mathrm{Hom}_\mathcal{B}(F(X),X)$ such that $f\circ \mathsf{F}(f)=f\circ\eta_X$. A morphism $\alpha:(X,f)\rightarrow(Y,g)$ in $\mathcal{B}\ltimes_\eta\mathsf{F}$ is a morphism $\alpha\in\mathrm{Hom}_\mathcal{B}(X,Y)$ such that $\alpha\circ f=g\circ \mathsf{F}(\alpha)$. One can prove that $\mathcal{B}\ltimes_\eta\mathsf{F}$ is an abelian category; see \cite[Proposition~1.2]{M}. If $\eta=0$, then $\mathcal{B}\ltimes\mathsf{F}:=\mathcal{B}\ltimes_0\mathsf{F}$ is the \emph{right trivial extension} of an abelian category $\mathcal{B}$ by the functor $\mathsf{F}$; see \cite[Section~1]{FPR}.
 
 There are the following functors:
 \[\begin{tikzcd}
 	\mathcal{B} \arrow[r, "\mathsf{Z}"', shift right] & \mathcal{B}\ltimes_\eta\mathsf{F} \arrow[r, "\mathsf{U}"', shift right] \arrow[l, "\mathsf{C}"', shift right] & \mathcal{B} \arrow[l, "\mathsf{T}"', shift right]
 \end{tikzcd}\]
 
 \begin{itemize}
 	\item $\mathsf{U}(X,f)=X$ and $\mathsf{U}(\alpha)=\alpha$;
 	\item $\mathsf{Z}(X)=(X,0)$ and $\mathsf{Z}(\alpha)=\alpha$;
 	\item $\mathsf{T}(X)=\left(X\oplus\mathsf{F}(X),t_X:=\left(\begin{smallmatrix}
 		0&0\\
 		1&\eta_X
 	\end{smallmatrix}\right)\right)$ and $\mathsf{T}(\alpha)=\begin{pmatrix}
 	\alpha&0\\
 	0&\mathsf{F}(\alpha)
 	\end{pmatrix}$;
 	\item $\mathsf{C}(X,f)=\mathrm{Coker}(f)$ and $\mathsf{C}(\alpha)$ is the morphism induced by the cokernel.
 \end{itemize}
 
 The associativity of $\eta$ guarantees that $\mathsf{T}(X)$ is indeed an object of $\mathcal{B}\ltimes_\eta\mathsf{F}$ (the converse is also true). The following proposition describes some homological properties of  $\eta$-extensions; see \cite[Proposition~1.2, Lemma~2.1, Proposition~2.6, and Proposition~2.7]{M}.

\begin{pro}\label{pro1}
	Let $\mathcal{B}\ltimes_\eta\mathsf{F}$ be an $\eta$-extension of $\mathcal{B}$. Then the following hold.
	\begin{enumerate}[(1)]
		\item The sequence $(X,f)\xrightarrow{\alpha}(Y,g)\xrightarrow{\beta}(Z,h)$ is exact in $\mathcal{B}\ltimes_\eta\mathsf{F}$ if and only if the underlying sequence$ X\xrightarrow{\alpha}Y\xrightarrow{\beta}Z$ is exact in $\mathcal{B}$. In particular, the functor $\mathsf{U}$ is exact.
		\item The pairs $(\mathsf{T,U})$ and $(\mathsf{C,Z})$ are adjoint pairs of functors, and $\mathsf{CT}\cong\mathsf{Id}_\mathcal{B}$.
		\item The object $(X,f)\in\mathcal{B}\ltimes_\eta\mathsf{F}$ is projective if and only if $\mathrm{Coker}(f)\in\mathcal{B}$ is projective and $\mathsf{T}(\mathrm{Coker}(f))\cong(X,f)$ in $\mathcal{B}\ltimes_\eta\mathsf{F}$.
		\item If $\mathcal{B}$ has enough projective objects, then so does $\mathcal{B}\ltimes_\eta\mathsf{F}$.
	\end{enumerate}
\end{pro}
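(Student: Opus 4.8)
The plan is to make kernels and cokernels in $\mathcal{B}\ltimes_\eta\mathsf{F}$ explicit and read everything off from them. For a morphism $\alpha\colon(X,f)\to(Y,g)$, let $\iota\colon K\hookrightarrow X$ be the kernel of $\alpha$ in $\mathcal{B}$; since $\alpha\circ f\circ\mathsf{F}(\iota)=g\circ\mathsf{F}(\alpha\iota)=0$, the composite $f\circ\mathsf{F}(\iota)$ factors uniquely through $\iota$ as $\iota\circ f_K$, and $f_K\circ\mathsf{F}(f_K)=f_K\circ\eta_K$ follows from the corresponding identity for $f$ together with naturality of $\eta$; thus $(K,f_K)$ is the kernel of $\alpha$ in $\mathcal{B}\ltimes_\eta\mathsf{F}$. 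Dually, right-exactness of $\mathsf{F}$ makes $\mathsf{F}(Y)\twoheadrightarrow\mathsf{F}(\coker\alpha)$ epic, so $g$ descends to a structure map on $\coker\alpha$ and produces the cokernel. These formulas show $\mathsf{U}$ preserves kernels and cokernels, hence is exact, and that a sequence $(X,f)\to(Y,g)\to(Z,h)$ is exact at $(Y,g)$ precisely when its $\mathsf{U}$-image is exact at $Y$; this is~(1).

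For (2) I would check both adjunctions directly. A morphism $\mathsf{T}(X)=(X\oplus\mathsf{F}(X),t_X)\to(Y,g)$ is a row $[a\ \ b]\colon X\oplus\mathsf{F}(X)\to Y$, and unwinding $[a\ \ b]\circ t_X=g\circ\mathsf{F}([a\ \ b])$ with $t_X=\left(\begin{smallmatrix}0&0\\1&\eta_X\end{smallmatrix}\right)$ forces $b=g\circ\mathsf{F}(a)$ and $b\circ\eta_X=g\circ\mathsf{F}(b)$; the latter is automatic from $g\circ\mathsf{F}(g)=g\circ\eta_Y$ and naturality of $\eta$, so $a\mapsto[a\ \ g\mathsf{F}(a)]$ is a bijection $\Hom_\mathcal{B}(X,Y)\xrightarrow{\sim}\Hom_{\mathcal{B}\ltimes_\eta\mathsf{F}}(\mathsf{T}(X),(Y,g))$, natural in both variables. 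For $(\mathsf{C},\mathsf{Z})$: a morphism $(X,f)\to\mathsf{Z}(Z)=(Z,0)$ is a map $X\to Z$ annihilating $\mathrm{im}(f)$, i.e.\ a map $\coker(f)=\mathsf{C}(X,f)\to Z$, again naturally. Finally $\mathrm{im}(t_X)=0\oplus\mathsf{F}(X)$ because $[1\ \ \eta_X]$ is epic, so $\mathsf{C}\mathsf{T}(X)=\coker(t_X)\cong X$, naturally in $X$.

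For (4), the counit of $(\mathsf{T},\mathsf{U})$ at $(X,f)$ is $[\mathrm{id}_X\ \ f]\colon\mathsf{T}(X)\to(X,f)$, which is an epimorphism by~(1) since it splits on underlying objects. Composing with $\mathsf{T}(p)$ for an epimorphism $p\colon P\twoheadrightarrow X$ with $P$ projective gives an epimorphism $\mathsf{T}(P)\twoheadrightarrow(X,f)$, and $\mathsf{T}(P)$ is projective because $\mathsf{T}$ is left adjoint to the exact functor $\mathsf{U}$. The ``if'' half of~(3) is the same observation: $\mathsf{T}(\coker f)$, hence $(X,f)$, is projective. For the ``only if'' half, the epimorphism $\mathsf{T}(P)\twoheadrightarrow(X,f)$ just constructed splits once $(X,f)$ is projective, so $(X,f)$ is a direct summand of $\mathsf{T}(P)$ with $P$ projective; applying the additive functor $\mathsf{C}$ shows $\coker(f)\cong\mathsf{C}(X,f)$ is a summand of $\mathsf{C}\mathsf{T}(P)=P$, hence projective. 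It then remains to prove that every direct summand of $\mathsf{T}(P)$ is isomorphic to $\mathsf{T}(P')$ for a summand $P'$ of $P$: transporting an idempotent $e\in\End_{\mathcal{B}\ltimes_\eta\mathsf{F}}(\mathsf{T}(P))$ across the isomorphism $\End_{\mathcal{B}\ltimes_\eta\mathsf{F}}(\mathsf{T}(P))\cong\Hom_\mathcal{B}(P,P\oplus\mathsf{F}(P))$ from~(2) presents $e$ by a matrix $\left(\begin{smallmatrix}\phi&0\\\psi&\mathsf{F}(\phi)\end{smallmatrix}\right)$ with $\phi$ idempotent, and after splitting $P=P'\oplus P''$ along $\phi$ and conjugating $e$ by a unipotent matrix built from $\psi$ one identifies $\mathrm{im}(e)$, with its induced structure map, with $\mathsf{T}(P')$; applying $\mathsf{C}$ then yields $P'\cong\coker f$, so $(X,f)\cong\mathsf{T}(P')\cong\mathsf{T}(\coker f)$. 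This last step --- keeping track of the structure maps through the change of basis --- is the main obstacle, and it is exactly the content of \cite[Propositions~2.6 and~2.7]{M}, whose argument I would follow.
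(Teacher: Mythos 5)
The paper itself offers no proof of this proposition; it is stated as a recollection from the literature, with a pointer to \cite[Proposition~1.2, Lemma~2.1, Propositions~2.6 and~2.7]{M}, so the correct benchmark for your sketch is Marmaridis's argument rather than anything in this paper. Your self-contained treatment of (1), (2), (4) and the ``if'' half of (3) is correct and matches the standard route: kernels and cokernels are computed componentwise (right-exactness of $\mathsf{F}$ is exactly what is needed for $g$ to descend to $\coker\alpha$), the two adjunction bijections follow by unwinding the morphism conditions with the observation that one of the two equations is redundant by naturality of $\eta$, $\mathsf{CT}\cong\mathrm{Id}$ from $\mathrm{im}(t_X)=0\oplus\mathsf{F}(X)$, and projectivity of $\mathsf{T}(P)$ from $\mathsf{T}\dashv\mathsf{U}$ with $\mathsf{U}$ exact.

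In the ``only if'' half of (3) there is a slip that matters for general $\eta$. Under the identification of $\End_{\mathcal{B}\ltimes_\eta\mathsf{F}}(\mathsf{T}(P))$ with $\Hom_\mathcal{B}(P,P\oplus\mathsf{F}(P))$ via $a\mapsto[a\;\;t_P\circ\mathsf{F}(a)]$, the element $a=\left(\begin{smallmatrix}\phi\\\psi\end{smallmatrix}\right)$ corresponds to the matrix $\left(\begin{smallmatrix}\phi&0\\\psi&\mathsf{F}(\phi)+\eta_P\circ\mathsf{F}(\psi)\end{smallmatrix}\right)$, not $\left(\begin{smallmatrix}\phi&0\\\psi&\mathsf{F}(\phi)\end{smallmatrix}\right)$; your formula is only valid when $\eta=0$. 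This affects the conjugation step: for a general $c\colon P\to\mathsf{F}(P)$ the matrix $\left(\begin{smallmatrix}1&0\\c&1\end{smallmatrix}\right)$ is not even an endomorphism of $\mathsf{T}(P)$ in $\mathcal{B}\ltimes_\eta\mathsf{F}$ unless $\eta_P\circ\mathsf{F}(c)=0$; the automorphism corresponding to $\left(\begin{smallmatrix}1\\c\end{smallmatrix}\right)$ has $(2,2)$-entry $1+\eta_P\circ\mathsf{F}(c)$, and the idempotency constraints on $\psi$ also acquire an $\eta$-correction. You already flag this as the step you would take from~[M], which handles the general case correctly, so this is an imprecision in the sketch rather than a wrong plan; but the conjugation as you have written it would need to be repaired before it could replace the citation.
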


\subsection{$\zeta$-coextensions of abelian categories}\label{subsec2.2}

There is a dual notion to $\eta$-extensions which we recall below \cite{M}. Let $\mathsf{G}:\mathcal{B}\rightarrow\mathcal{B}$ be a covariant, additive, left exact functor and $\zeta:\mathsf{G}\rightarrow\mathsf{G}^2$ a natural transformation. Assume that $\zeta$ is \emph{coassociative}, that is, $\mathsf{G}\zeta\circ\zeta=\zeta\mathsf{G}\circ\zeta$, i.e. the following diagram commutes:
\[\begin{tikzcd}
	\mathsf{G} \arrow[d, "\zeta"'] \arrow[r, "\zeta"] & \mathsf{G}^2 \arrow[d, "\mathsf{G}\zeta"] \\
	\mathsf{G}^2 \arrow[r, "\zeta\mathsf{G}"]                                        & \mathsf{G}^3                    
\end{tikzcd}\]

 We denote by $\mathsf{G}\rtimes_\zeta\mathcal{B}$ the category (called a \emph{$\zeta$-coextension} of $\mathcal{B}$) defined as follows: The objects are pairs $[Y,g]$, where $g\in\mathrm{Hom}_\mathcal{B}(X,\mathsf{G}(X))$ such that $\mathsf{G}(g)\circ g=\zeta_X\circ g$. A morphism $\alpha:[X,f]\rightarrow[Y,g]$ in $\mathsf{G}\rtimes\mathcal{B}$ is a morphism $\alpha\in\mathrm{Hom}_\mathcal{B}(X,Y)$ such that $\mathsf{G}(\alpha)\circ f=g\circ \alpha$. One can prove that $\mathsf{G}\rtimes_\zeta\mathcal{B}$ is an abelian category; see \cite[Proposition~1.2]{M}. If $\zeta=0$, then $\mathsf{G}\rtimes\mathcal{B}:=\mathsf{G}\rtimes_0\mathcal{B}$ is the \emph{left trivial extension} of an abelian category $\mathcal{B}$ by the functor $\mathsf{G}$; see \cite[Section~1]{FPR}.

 There are the following functors:
\[\begin{tikzcd}
	\mathcal{B} \arrow[r, "\mathsf{Z}"', shift right] & \mathsf{G}\rtimes_\zeta\mathcal{B} \arrow[r, "\mathsf{U}"', shift right] \arrow[l, "\mathsf{K}"', shift right] & \mathcal{B} \arrow[l, "\mathsf{H}"', shift right]
\end{tikzcd}\]

\begin{itemize}
	\item $\mathsf{U}[X,f]=X$ and $\mathsf{U}(\alpha)=\alpha$;
	\item $\mathsf{Z}(X)=[X,0]$ and $\mathsf{Z}(\alpha)=\alpha$;
	\item $\mathsf{H}(X)=\left[\mathsf{G}(X)\oplus X,s_X:=\left(\begin{smallmatrix}
		\zeta_X&0\\
		1&0
	\end{smallmatrix}\right)\right]$ and $\mathsf{H}(\alpha)=\begin{pmatrix}
		\alpha&0\\
		0&\mathsf{G}(\alpha)
	\end{pmatrix}$;
	\item $\mathsf{K}[X,f]=\mathrm{Ker}(f)$ and $\mathsf{K}(\alpha)$ is the morphism induced by the kernel.
\end{itemize}

The coassociativity of $\zeta$ guarantees that $\mathsf{H}(X)$ is indeed an object of $\mathsf{G}\rtimes_\zeta\mathcal{B}$ (the converse is also true). The following proposition describes some homological properties of  $\zeta$-coextensions; see \cite[Proposition~1.2, Lemma~3.1, Proposition~3.2]{M}.

\begin{pro}\label{pro2}
Let $\mathsf{G}\rtimes_\zeta\mathcal{B}$ be the $\zeta$-coextension of an abelian category $\mathcal{B}$. Then the following statements hold.
	\begin{enumerate}[(1)]
		\item The sequence $[X,f]\xrightarrow{\alpha}[Y,g]\xrightarrow{\beta}[Z,h]$ is exact in $\mathsf{G}\rtimes_\zeta\mathcal{B}$ if and only if the underlying sequence $ X\xrightarrow{\alpha}Y\xrightarrow{\beta}Z$ is exact in $\mathcal{B}$. In particular, the functor $\mathsf{U}$ is exact.
		\item The pairs $(\mathsf{H,U})$ and $(\mathsf{K,Z})$ are adjoint pairs of functors, and $\mathsf{KH}\cong\mathsf{Id}_\mathcal{B}$.
		\item The object $[X,f]\in\mathsf{G}\rtimes_\zeta\mathcal{B}$ is injective if and only if $\mathrm{Ker}(f)\in\mathcal{B}$ is injective and $\mathsf{H}(\mathrm{Ker}(f))\cong[X,f]$ in $\mathsf{G}\rtimes_\zeta\mathcal{B}$.
		\item If $\mathcal{B}$ has enough injective objects, then so does $\mathsf{G}\rtimes_\zeta\mathcal{B}$.
	\end{enumerate}
\end{pro}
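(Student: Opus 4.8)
The plan is to deduce Proposition~\ref{pro2} from Proposition~\ref{pro1} by passing to opposite categories: a $\zeta$-coextension is literally an $\eta$-extension viewed in the opposite category, so no argument needs to be repeated. Concretely, given a covariant additive left exact functor $\mathsf{G}\colon\mathcal{B}\to\mathcal{B}$ and a coassociative natural transformation $\zeta\colon\mathsf{G}\to\mathsf{G}^2$, the opposite functor $\mathsf{G}^{\mathrm{op}}\colon\mathcal{B}^{\mathrm{op}}\to\mathcal{B}^{\mathrm{op}}$ is covariant, additive and right exact, and $\eta:=\zeta^{\mathrm{op}}\colon(\mathsf{G}^{\mathrm{op}})^2\to\mathsf{G}^{\mathrm{op}}$ is a natural transformation; reversing all arrows turns the coassociativity square for $\zeta$ into the associativity square for $\eta$. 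Since $\mathcal{B}$ has enough injectives and enough projectives, $\mathcal{B}^{\mathrm{op}}$ has enough projectives and enough injectives, so Proposition~\ref{pro1} applies to the $\eta$-extension $\mathcal{B}^{\mathrm{op}}\ltimes_\eta\mathsf{G}^{\mathrm{op}}$.

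The first step is then to establish an isomorphism of abelian categories $\mathsf{G}\rtimes_\zeta\mathcal{B}\cong(\mathcal{B}^{\mathrm{op}}\ltimes_\eta\mathsf{G}^{\mathrm{op}})^{\mathrm{op}}$ sending an object $[X,f]$ with $f\in\mathrm{Hom}_\mathcal{B}(X,\mathsf{G}(X))$ and $\mathsf{G}(f)\circ f=\zeta_X\circ f$ to the object $(X,f)$, where now $f$ is read as a morphism $\mathsf{G}^{\mathrm{op}}(X)\to X$ in $\mathcal{B}^{\mathrm{op}}$ and the displayed relation becomes exactly $f\circ\mathsf{G}^{\mathrm{op}}(f)=f\circ\eta_X$; morphisms correspond by the same bookkeeping. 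Under this identification one checks directly that the functors $\mathsf{U},\mathsf{Z},\mathsf{H},\mathsf{K}$ attached to the coextension correspond, respectively, to the functors $\mathsf{U},\mathsf{Z},\mathsf{T},\mathsf{C}$ attached to $\mathcal{B}^{\mathrm{op}}\ltimes_\eta\mathsf{G}^{\mathrm{op}}$: for $\mathsf{H}$ versus $\mathsf{T}$ this amounts to matching the matrix $s_X=\left(\begin{smallmatrix}\zeta_X&0\\1&0\end{smallmatrix}\right)$ with the opposite of $t_X=\left(\begin{smallmatrix}0&0\\1&\eta_X\end{smallmatrix}\right)$ after interchanging the two direct summands, and for $\mathsf{K}$ versus $\mathsf{C}$ one uses that a kernel in $\mathcal{B}$ is a cokernel in $\mathcal{B}^{\mathrm{op}}$.

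Granting this, each item follows by applying the corresponding item of Proposition~\ref{pro1} to $\mathcal{B}^{\mathrm{op}}\ltimes_\eta\mathsf{G}^{\mathrm{op}}$ and transporting along the isomorphism: for (1), exactness of a three-term complex is preserved by passage to the opposite category and ``$\mathsf{U}$ exact'' is self-dual; for (2), from $L\dashv R$ one has $R^{\mathrm{op}}\dashv L^{\mathrm{op}}$, so the adjoint pairs $(\mathsf{T},\mathsf{U})$ and $(\mathsf{C},\mathsf{Z})$ dualize to the two adjoint pairs of (2), while $\mathsf{C}\mathsf{T}\cong\mathsf{Id}_{\mathcal{B}^{\mathrm{op}}}$ dualizes to $\mathsf{K}\mathsf{H}\cong\mathsf{Id}_\mathcal{B}$; for (3), ``injective in $\mathsf{G}\rtimes_\zeta\mathcal{B}$'' means ``projective in $\mathcal{B}^{\mathrm{op}}\ltimes_\eta\mathsf{G}^{\mathrm{op}}$'' and ``$\mathrm{Coker}$ in $\mathcal{B}^{\mathrm{op}}$'' means ``$\mathrm{Ker}$ in $\mathcal{B}$'', so Proposition~\ref{pro1}(3) becomes the asserted statement; and for (4), $\mathcal{B}^{\mathrm{op}}$ has enough projectives since $\mathcal{B}$ has enough injectives, so $\mathcal{B}^{\mathrm{op}}\ltimes_\eta\mathsf{G}^{\mathrm{op}}$ has enough projectives by Proposition~\ref{pro1}(4), which dualizes to $\mathsf{G}\rtimes_\zeta\mathcal{B}$ having enough injectives. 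The only genuinely delicate point I expect is the categorical identification in the second paragraph — one must keep careful track of the order of the direct summands in $s_X$ and $t_X$ and of the direction of arrows when transporting the adjunction units and counits — but this is routine; alternatively one may simply transcribe the proofs of \cite[Proposition~1.2, Lemma~3.1, Proposition~3.2]{M}, which run in exact parallel.
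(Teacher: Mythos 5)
Your proof is correct. One small terminological remark first: the paper itself does not prove Proposition~\ref{pro2} at all; it cites \cite[Proposition~1.2, Lemma~3.1, Proposition~3.2]{M} and leaves the duality implicit, exactly as it does for Proposition~\ref{pro1}. So there is no ``paper proof'' to compare against line by line. What you have supplied is a self-contained reduction of Proposition~\ref{pro2} to Proposition~\ref{pro1} via the isomorphism $\mathsf{G}\rtimes_\zeta\mathcal{B}\cong\bigl(\mathcal{B}^{\mathrm{op}}\ltimes_\eta\mathsf{G}^{\mathrm{op}}\bigr)^{\mathrm{op}}$, and this is a legitimate and arguably preferable way to justify the proposition, since it makes the ``dual result'' label precise rather than gestural. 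I checked the details that you flag as delicate: the object condition $\mathsf{G}(f)\circ f=\zeta_X\circ f$ in $\mathcal{B}$ does become $f\circ\mathsf{G}^{\mathrm{op}}(f)=f\circ\eta_X$ in $\mathcal{B}^{\mathrm{op}}$; after swapping the two summands, $s_X=\left(\begin{smallmatrix}\zeta_X&0\\1&0\end{smallmatrix}\right)$ is indeed the transpose (i.e.\ the opposite morphism) of $t_X=\left(\begin{smallmatrix}0&0\\1&\eta_X\end{smallmatrix}\right)$; and kernels in $\mathcal{B}$ are cokernels in $\mathcal{B}^{\mathrm{op}}$, so $\mathsf{K}$ matches $\mathsf{C}$. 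Items (1)--(4) then transport exactly as you say, and the standing hypothesis that $\mathcal{B}$ has enough projectives \emph{and} enough injectives guarantees that $\mathcal{B}^{\mathrm{op}}$ satisfies the hypotheses of Subsection~\ref{subsec2.1}.

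One tiny point worth being explicit about, since it is the only place the duality could trip up a reader: in Proposition~\ref{pro1} the listed functors $\mathsf{T}$ and $\mathsf{C}$ are \emph{left} adjoints ($\mathsf{T}\dashv\mathsf{U}$, $\mathsf{C}\dashv\mathsf{Z}$), so under the duality $L\dashv R \Rightarrow R^{\mathrm{op}}\dashv L^{\mathrm{op}}$ the functors $\mathsf{H}$ and $\mathsf{K}$ come out as \emph{right} adjoints ($\mathsf{U}\dashv\mathsf{H}$, $\mathsf{Z}\dashv\mathsf{K}$). The paper's phrasing ``the pairs $(\mathsf{H},\mathsf{U})$ and $(\mathsf{K},\mathsf{Z})$ are adjoint pairs'' uses the same ordering as the extension case even though the handedness has flipped; your derivation produces the correct handedness, so if you state the adjunctions explicitly, state $\mathsf{U}\dashv\mathsf{H}$ and $\mathsf{Z}\dashv\mathsf{K}$ rather than inferring a fixed left-right convention from the paper's notation.
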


Let $\mathsf{F}:\mathcal{B}\rightarrow\mathcal{B}$ be a covariant, additive, right exact functor and $\eta:\mathsf{F}^2\rightarrow\mathsf{F}$ a natural transformation. Assume that $\mathsf{F}$ admits a right adjoint functor $\mathsf{G}$. Let $\varphi_{X,Y}$ be the adjunction isomorphism from $\mathrm{Hom}_\mathcal{B}(\mathsf{F}X,Y)$ to $\mathrm{Hom}_\mathcal{B}(X,\mathsf{G}Y)$ for $X,Y\in\mathcal{B}$ and $\psi_{X,Y}=\varphi_{X,Y}^{-1}$. For any $X\in\mathcal{B}$, let $\zeta_X$ denote the composed morphism
\[\mathsf{G}(X)\xrightarrow{\varphi^2(\eta_{\mathsf{G}X})}\mathsf{G}^2\mathsf{FG}X\xlongrightarrow{\mathsf{G}^2(\psi_{\mathsf{G}X,X}(1_{\mathsf{G}X}))}\mathsf{G}^2X.\]
Here, $\varphi^2$ means $\varphi_{\mathsf{G}X,\mathsf{GFG}X}\circ\varphi_{\mathsf{FG}X,\mathsf{FG}X}$. Then there is the following result; see \cite[Section~4]{M}.

\begin{pro}\label{pro3}  Keep the notation as above. Then the following statements hold.
	\begin{enumerate}[(1)]
		\item $\zeta:\mathsf{G}\rightarrow\mathsf{G}^2$ is a natural transformation.
		\item If $\eta=0$, then $\zeta=0.$
		\item If $\eta$ is associative, then $\zeta$ is coassociative.
		\item The functor $\mathsf{\Phi}:\mathcal{B}\ltimes_\eta\mathsf{F}\rightarrow\mathsf{G}\rtimes_\zeta\mathcal{B},\;(X,f)\mapsto[X,\varphi_{X,X}(f)]$ is an isomorphism of categories.
	\end{enumerate}
\end{pro}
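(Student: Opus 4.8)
The plan is to reformulate everything in terms of the unit $u\colon\mathrm{Id}_\mathcal{B}\to\mathsf{GF}$ and counit $\epsilon\colon\mathsf{FG}\to\mathrm{Id}_\mathcal{B}$ of the adjunction $\mathsf{F}\dashv\mathsf{G}$, so that $\varphi_{X,Y}(h)=\mathsf{G}(h)\circ u_X$ and $\psi_{X,Y}(k)=\epsilon_Y\circ\mathsf{F}(k)$. The first step is to unwind the definition of $\zeta_X$: since $\psi_{\mathsf{G}X,X}(1_{\mathsf{G}X})=\epsilon_X$, one obtains
\[\zeta_X=\mathsf{G}^2(\epsilon_X)\circ\mathsf{G}^2(\eta_{\mathsf{G}X})\circ\mathsf{G}(u_{\mathsf{F}\mathsf{G}X})\circ u_{\mathsf{G}X},\]
which exhibits $\zeta$ as the mate (conjugate) of $\eta$ under $\mathsf{F}\dashv\mathsf{G}$ and $\mathsf{F}^2\dashv\mathsf{G}^2$. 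Granting this, (1) follows at once, as every factor above is natural in $X$ (by naturality of $u$, $\epsilon$ and $\eta$); alternatively it drops out of naturality of the adjunction isomorphism $\varphi$ in both variables together with naturality of $\eta$. Part (2) is then immediate: if $\eta=0$ the factor $\mathsf{G}^2(\eta_{\mathsf{G}X})$ vanishes, so $\zeta_X=0$ for all $X$.

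I expect part (3) to be the main obstacle. The plan is to transpose the associativity square $\eta\circ\mathsf{F}\eta=\eta\circ\eta\mathsf{F}\colon\mathsf{F}^3\Rightarrow\mathsf{F}$ into the coassociativity square $\mathsf{G}\zeta\circ\zeta=\zeta\mathsf{G}\circ\zeta\colon\mathsf{G}\Rightarrow\mathsf{G}^3$ along the adjunction $\mathsf{F}^3\dashv\mathsf{G}^3$: expand both sides of the latter using the displayed formula for $\zeta$, repeatedly move the occurrences of $u$ and $\epsilon$ past functor applications using their naturality and the triangle identities, and rewrite each side as $\mathsf{G}^3$ applied to a single morphism of $\mathcal{B}$ built from instances of $\eta$; at that point the associativity square for $\eta$ equates the two sides, and the argument reverses to give the converse. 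Equivalently one can quote the general principle that the mate correspondence is compatible with whiskering, whence it sends an associative ``multiplication'' $\eta$ on $\mathsf{F}$ bijectively to a coassociative ``comultiplication'' $\zeta$ on its right adjoint $\mathsf{G}$ --- the unitless version of the classical duality between monads on $\mathsf{F}$ and comonads on $\mathsf{G}$ --- and from this angle (2) is simply the case of the zero multiplication. The only genuine difficulty is the bookkeeping --- tracking the many applications of naturality of $u$, $\epsilon$, $\eta$ and of the two triangle identities --- and no new idea is needed.

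For (4) I would reduce the defining conditions on the two sides to one another by adjunction. Write $\varphi^{(2)}_X\colon\mathrm{Hom}_\mathcal{B}(\mathsf{F}^2X,X)\isoto\mathrm{Hom}_\mathcal{B}(X,\mathsf{G}^2X)$ for the twice-iterated adjunction isomorphism. The key step is to check, for every $f\colon\mathsf{F}X\to X$ with $g:=\varphi_{X,X}(f)$, the two identities
\[\varphi^{(2)}_X\big(f\circ\mathsf{F}(f)\big)=\mathsf{G}(g)\circ g,\qquad\varphi^{(2)}_X\big(f\circ\eta_X\big)=\zeta_X\circ g,\]
each of which is a short diagram chase using naturality of $u$ and of $\eta$ together with $\psi_{X,X}(g)=\epsilon_X\circ\mathsf{F}(g)=f$. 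Since $\varphi^{(2)}_X$ is bijective, $(X,f)$ satisfies $f\circ\mathsf{F}(f)=f\circ\eta_X$ if and only if $g$ satisfies $\mathsf{G}(g)\circ g=\zeta_X\circ g$; hence $\Phi$ is well defined on objects, and bijective on them since $\varphi_{X,X}$ is a bijection. On morphisms $\Phi$ is the identity on the underlying maps of $\mathcal{B}$, and naturality of $\varphi$ shows that, for objects $(X,f),(Y,f')$ and $\alpha\in\mathrm{Hom}_\mathcal{B}(X,Y)$, one has $\alpha\circ f=f'\circ\mathsf{F}(\alpha)$ if and only if $\mathsf{G}(\alpha)\circ\varphi_{X,X}(f)=\varphi_{Y,Y}(f')\circ\alpha$, so $\Phi$ is full and faithful. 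It is plainly a functor, and the same argument run backwards identifies its inverse as $[X,g]\mapsto(X,\psi_{X,X}(g))$; thus $\Phi$ is an isomorphism of categories. (A variant deriving (3) from this object-translation, applied to the objects $\mathsf{T}(X)$ and compared with the analogous characterization of coassociativity via $\mathsf{H}(X)$, is also possible and bypasses the explicit computation above.)
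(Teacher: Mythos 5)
The paper does not give its own proof of this proposition --- it is stated with the reference ``see \cite[Section~4]{M}'' --- so there is no internal proof to compare against; I can only assess your argument on its merits, and it is correct.

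Your unwinding of $\zeta_X$ in terms of the unit $u$ and counit $\epsilon$ is right. It is worth making explicit that, by naturality of $\varphi$, your displayed formula is equivalent to
\[\zeta_X=\varphi^{(2)}_{\mathsf{G}X,\,X}\bigl(\epsilon_X\circ\eta_{\mathsf{G}X}\bigr),\]
where $\varphi^{(2)}_{A,B}\colon\mathrm{Hom}(\mathsf{F}^2A,B)\isoto\mathrm{Hom}(A,\mathsf{G}^2B)$ is the iterated adjunction; that is, $\zeta$ is the transpose of $\epsilon\circ\eta\mathsf{G}$ across $\mathsf{F}^2\dashv\mathsf{G}^2$. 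This packaging gives (1) and (2) at once, as you say, and it also makes (4) essentially a two-line check: applying $\psi^{(2)}_{X,X}$ and naturality of $\psi^{(2)}$ in the first variable to $\zeta_X\circ g$ gives $\epsilon_X\circ\eta_{\mathsf{G}X}\circ\mathsf{F}^2(g)$, which by naturality of $\eta$ equals $\epsilon_X\circ\mathsf{F}(g)\circ\eta_X=f\circ\eta_X$; the other identity $\varphi^{(2)}_{X,X}(f\circ\mathsf{F}(f))=\mathsf{G}(g)\circ g$ is the usual transposition of a composite. The morphism condition is naturality of $\varphi$, and the inverse is $[X,g]\mapsto(X,\psi_{X,X}(g))$. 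So (4) is complete.

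For (3), your primary plan (the bookkeeping transposition, or the mate-correspondence principle) is sound, and the same reformulation lets one transpose the square $\eta\circ\mathsf{F}\eta=\eta\circ\eta\mathsf{F}$ across $\mathsf{F}^3\dashv\mathsf{G}^3$ directly. However, the ``variant'' you float at the end does not work as stated: $\Phi(\mathsf{T}(X))$ has underlying object $X\oplus\mathsf{F}(X)$, which is not $\mathsf{G}(X)\oplus X$, so $\Phi(\mathsf{T}(X))$ is not $\mathsf{H}(X)$, and the criterion ``$\mathsf{H}(X)\in\mathsf{G}\rtimes_\zeta\mathcal{B}$ for all $X$'' is not deduced from ``$\mathsf{T}(X)\in\mathcal{B}\ltimes_\eta\mathsf{F}$ for all $X$'' through the object-translation of (4) without a further computation that is comparable to the one you meant to avoid. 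Keep the direct route for (3).
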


\subsection{Cortorsion pairs}\label{subsec2.3}

Let $\mathcal{A}$ be an abelian category with enough projective objects and enough injective objects. A pair $(\mathcal{C,F})$ of classes of objects of $\mathcal{A}$ is a \emph{cotorsion pair} \cite{GJ,S}, if $\mathcal{C}=\prescript{\perp}{}{\mathcal{F}}$ and $\mathcal{F}=\mathcal{C}^\perp$. We said that a cotorsion pair $(\mathcal{X,Y})$ is \emph{contravariantly finite}, if $\omega:=\mathcal{X}\cap\mathcal{Y}$ is contravariantly finite, that is, for any object $A\in\mathcal{A}$, there exists a morphism $f_A:X_A\rightarrow A$ with $X_A\in\omega$ such that every morphism $X\rightarrow A$ with $X\in\omega$ factors through $f_A$.

A cotorsion pair $(\mathcal{C,F})$ is \emph{complete}, if for any object $X\in\mathcal{A}$, there are exact sequences
\[0\longrightarrow F\longrightarrow C\longrightarrow X\longrightarrow0\;\mbox{  and  }\;\;0\longrightarrow X\longrightarrow F'\longrightarrow C'\longrightarrow0,\]
with $C,C'\in\mathcal{C}$ and $F,F'\in\mathcal{F}$.

\begin{pro}$($\cite[Proposition~7.1.7]{EJ}$)$
	Let $\mathcal{A}$ be an abelian category with enough projective objects and enough injective objects. Assume that $(\mathcal{C,F})$ is a cotorsion pair in $\mathcal{A}$. 
	Then the following conditions are equivalent:
	\begin{enumerate}
		\item $(\mathcal{C,F})$ is complete;
		\item For any object $X\in\mathcal{A}$, there is an exact sequence
		$0\rightarrow F\rightarrow C\rightarrow X\rightarrow0$
		with $C\in\mathcal{C}$ and $F\in\mathcal{F}$;
		\item For any object $X\in\mathcal{A}$, there is an exact sequence $0\rightarrow X\rightarrow F'\rightarrow C'\rightarrow0$
		with $C'\in\mathcal{C}$ and $F'\in\mathcal{F}$.
	\end{enumerate}
\end{pro}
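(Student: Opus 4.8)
The plan is to follow Salce's lemma. The implications $(1)\Rightarrow(2)$ and $(1)\Rightarrow(3)$ are immediate from the definition of completeness --- indeed condition $(1)$ is precisely the conjunction of $(2)$ and $(3)$ --- so it suffices to establish $(2)\Rightarrow(3)$ and $(3)\Rightarrow(2)$. Because $\mathcal{A}$ has enough projective and enough injective objects, these two implications are formally dual to one another, and I would carry out only $(2)\Rightarrow(3)$ in detail, obtaining $(3)\Rightarrow(2)$ by dualizing.

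First I would record the elementary facts used throughout. For the cotorsion pair $(\mathcal{C,F})$, every injective object of $\mathcal{A}$ lies in $\mathcal{F}=\mathcal{C}^\perp$ and every projective object lies in $\mathcal{C}=\prescript{\perp}{}{\mathcal{F}}$; moreover both $\mathcal{C}$ and $\mathcal{F}$ are closed under extensions. The last point follows from the long exact sequences for $\mathrm{Ext}^1(C,-)$ and $\mathrm{Ext}^1(-,F)$: given $0\to F_1\to D\to F_2\to 0$ with $F_1,F_2\in\mathcal{F}$, the group $\mathrm{Ext}^1(C,D)$ sits between $\mathrm{Ext}^1(C,F_1)=0$ and $\mathrm{Ext}^1(C,F_2)=0$ for every $C\in\mathcal{C}$, whence $D\in\mathcal{F}$; the argument for $\mathcal{C}$ is symmetric. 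This is where the hypothesis on enough projectives and injectives enters.

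For $(2)\Rightarrow(3)$, fix $X\in\mathcal{A}$ and choose a monomorphism $X\hookrightarrow I$ with $I$ injective, giving $0\to X\to I\xrightarrow{q} I/X\to 0$. Applying $(2)$ to $I/X$ yields $0\to F\to C\xrightarrow{\pi} I/X\to 0$ with $C\in\mathcal{C}$ and $F\in\mathcal{F}$. I would then form the pullback $D=I\times_{I/X}C$ of $q$ and $\pi$; its two projections give short exact sequences $0\to X\to D\to C\to 0$ (with kernel $\ker q\cong X$) and $0\to F\to D\to I\to 0$ (with kernel $\ker\pi\cong F$). Since $F,I\in\mathcal{F}$ and $\mathcal{F}$ is closed under extensions, $D\in\mathcal{F}$, so the first sequence has exactly the shape required by $(3)$. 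Dually, $(3)\Rightarrow(2)$ begins with an epimorphism $P\twoheadrightarrow X$ from a projective $P$, applies $(3)$ to the kernel, and replaces the pullback by the corresponding pushout; closure of $\mathcal{C}$ under extensions then produces the sequence $0\to F\to C\to X\to 0$ demanded by $(2)$.

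I do not expect a genuine obstacle here: the only points requiring care are the verification that the pullback of $q$ along $\pi$ (resp.\ the pushout) realizes exactly the two displayed short exact sequences with the indicated kernels, and the closure of $\mathcal{C}$ and $\mathcal{F}$ under extensions --- both routine. The proof thus amounts to assembling Salce's construction.
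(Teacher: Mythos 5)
Your proof is correct and reproduces the standard argument (Salce's lemma), which is precisely what the cited reference \cite[Proposition~7.1.7]{EJ} does; the paper itself gives no proof and simply cites that source. The pullback and pushout verifications are routine and hold in any abelian category, and the closure of $\mathcal{C}$ and $\mathcal{F}$ under extensions follows as you indicate from the long exact $\mathrm{Ext}$-sequences.
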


A cotorsion pair $(\mathcal{C,F})$ is \emph{hereditary}, provided that  it satisfies one of the equivalent conditions in the following proposition; see \cite[Theorem~1.2.10]{G}, \cite[Lemma~2.2.10]{GJ} and \cite[Proposition~2.5]{CRZ}.

\begin{pro}
	Let $\mathcal{A}$ be an abelian category with enough projective objects and enough injective objects. Assume that $(\mathcal{C,F})$ is a cotorsion pair in $\mathcal{A}$. Then the following conditions are equivalent:
	\begin{enumerate}[(1)]
		\item $\mathrm{Ext}_\mathcal{A}^2(\mathcal{C,F})=0$;
		\item $\mathrm{Ext}_\mathcal{A}^i(\mathcal{C,F})=0$ for $i\ge 1$;
		\item $\mathcal{C}$ is closed under the kernels of epimorphisms;
		\item $\mathcal{F}$ is closed under the cokernels of monomorphisms.
	\end{enumerate}
\end{pro}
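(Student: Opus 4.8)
The plan is to first record two structural facts about any cotorsion pair $(\mathcal{C},\mathcal{F})$ in $\mathcal{A}$, and then establish the chain of implications $(2)\Rightarrow(1)\Rightarrow(3)\Rightarrow(2)$ together with $(1)\Rightarrow(4)\Rightarrow(2)$. The facts I would use throughout are: (i) every projective object of $\mathcal{A}$ lies in $\mathcal{C}=\prescript{\perp}{}{\mathcal{F}}$ and, dually, every injective object lies in $\mathcal{F}=\mathcal{C}^\perp$, since $\mathrm{Ext}_\mathcal{A}^1$ out of a projective (resp.\ into an injective) always vanishes; and (ii) $\mathcal{C}$ is closed under extensions and direct summands, being a class of the form $\prescript{\perp}{}{\mathcal{F}}$, and dually for $\mathcal{F}$.

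The implication $(2)\Rightarrow(1)$ is trivial. For $(1)\Rightarrow(3)$ I would take a short exact sequence $0\to A\to B\to C\to 0$ with $B,C\in\mathcal{C}$, apply $\mathrm{Hom}_\mathcal{A}(-,F)$ for an arbitrary $F\in\mathcal{F}$, and read off from the resulting long exact sequence the segment $\mathrm{Ext}_\mathcal{A}^1(B,F)\to\mathrm{Ext}_\mathcal{A}^1(A,F)\to\mathrm{Ext}_\mathcal{A}^2(C,F)$; the outer terms vanish, the first because $(\mathcal{C},\mathcal{F})$ is a cotorsion pair and the last by hypothesis $(1)$, so $\mathrm{Ext}_\mathcal{A}^1(A,F)=0$ and hence $A\in\prescript{\perp}{}{\mathcal{F}}=\mathcal{C}$. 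The implication $(1)\Rightarrow(4)$ is the dual argument, applying $\mathrm{Hom}_\mathcal{A}(C,-)$ to $0\to F\to G\to H\to 0$ with $F,G\in\mathcal{F}$.

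For $(3)\Rightarrow(2)$ I would fix $C\in\mathcal{C}$ and $F\in\mathcal{F}$ and use that $\mathcal{A}$ has enough projectives to form the syzygies $\Omega^n C$; the key point is that $\Omega^1 C$ sits in a short exact sequence $0\to\Omega^1 C\to P\to C\to 0$ with $P$ projective, hence $P\in\mathcal{C}$ by (i), so $\Omega^1 C\in\mathcal{C}$ by $(3)$, and by induction $\Omega^n C\in\mathcal{C}$ for every $n\ge 0$. Ordinary dimension shifting then gives $\mathrm{Ext}_\mathcal{A}^i(C,F)\cong\mathrm{Ext}_\mathcal{A}^1(\Omega^{i-1}C,F)=0$ for all $i\ge 1$. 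The implication $(4)\Rightarrow(2)$ is dual, coresolving $F$ by injectives and using that the cosyzygies of $F$ remain in $\mathcal{F}$ by $(4)$ together with (i).

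I do not expect a genuine obstacle here; the proposition is standard and every step is a formal Ext computation. The one subtlety worth flagging is that $(1)$ by itself does \emph{not} yield $(2)$ by naive dimension shifting: from $\mathrm{Ext}_\mathcal{A}^2(\mathcal{C},\mathcal{F})=0$ alone one cannot conclude $\mathrm{Ext}_\mathcal{A}^2(\Omega^1 C,F)=0$ without first knowing $\Omega^1 C\in\mathcal{C}$. Hence the cycle must be routed through one of the closure conditions $(3)$ or $(4)$, which is precisely why the argument is organized as above rather than attempting $(1)\Rightarrow(2)$ directly.
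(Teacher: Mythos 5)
Your proof is correct, and it is the standard argument. The paper itself does not supply a proof of this proposition; it merely cites \cite[Theorem~1.2.10]{G}, \cite[Lemma~2.2.10]{GJ}, and \cite[Proposition~2.5]{CRZ}. The cycle $(2)\Rightarrow(1)\Rightarrow(3)\Rightarrow(2)$ together with $(1)\Rightarrow(4)\Rightarrow(2)$ that you run — using the long exact $\mathrm{Ext}$ sequence for $(1)\Rightarrow(3),(4)$, and syzygy/cosyzygy dimension shifting for $(3),(4)\Rightarrow(2)$ — is exactly what those references do, and your closing remark correctly identifies why one cannot shortcut from $(1)$ to $(2)$ without first passing through a closure condition.
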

\subsection{Approximations}

For an abelian category $\mathcal{A}$ with enough projective objects and enough injective objects, let $\mathcal{X}$ be a class of objects of $\mathcal{A}$, and let $A\in \mathcal{A}$. An epimorphism $f:X\rightarrow A$ with $X\in\mathcal{X}$ is called a \emph{special right $\mathcal{X}$-approximation} of $A$, if $\Ker(f)\in\mathcal{X}^\perp$. A monomorphism $f:A\rightarrow X$ with $X\in\mathcal{X}$ is called a \emph{special left $\mathcal{X}$-approximation} of $A$, if $\mathrm{Coker}(f)\in\prescript{\perp}{}{\mathcal{X}}$; see \cite[Definition~2.7]{BR}.

 \begin{pro}\label{cover}$($\cite[Theorem~3.1]{AA}$)$
 	Let $\mathcal{A}$ be an abelian category with enough projective objects and enough injective objects, and let $0\rightarrow A\rightarrow B\rightarrow C\rightarrow0$ be an exact sequence in $\mathcal{A}$. Assume that $(\mathcal{X,Y})$ is a hereditary cotorsion pair in $\mathcal{A}$.
 	\begin{enumerate}[(1)]
 		\item If $A$ and $C$ have special right $\mathcal{X}$-approximations, then $B$ has a special right $\mathcal{X}$-approximation.
 		\item If $A$ and $C$ have special left $\mathcal{Y}$-approximations, then $B$ has a special left $\mathcal{Y}$-approximation.
 	\end{enumerate}
 \end{pro}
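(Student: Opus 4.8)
The plan is to prove (1) by improving a given pair of special right $\mathcal{X}$-approximations of the outer terms to one of $B$, ``one end at a time'' via a pullback followed by a pushout, and to obtain (2) by dualizing; throughout I use that $\mathcal{X}={}^\perp\mathcal{Y}$ and $\mathcal{Y}=\mathcal{X}^\perp$ are closed under extensions, and all Ext-groups are taken over $\mathcal{A}$. First I would fix special right $\mathcal{X}$-approximations $0\to K_A\to X_A\xrightarrow{u}A\to 0$ and $0\to K_C\to X_C\xrightarrow{v}C\to 0$, so $X_A,X_C\in\mathcal{X}$ and $K_A,K_C\in\mathcal{Y}$, and pull the given sequence back along $v$ (i.e.\ form $E:=B\times_C X_C$). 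This produces an object $E$ fitting in two short exact sequences
\[0\to A\to E\xrightarrow{v'}X_C\to 0\qquad\text{and}\qquad 0\to K_C\to E\xrightarrow{\pi}B\to 0,\]
with $\pi,v'$ epimorphisms since $B\to C$ and $v$ are. It then suffices to find a special right $\mathcal{X}$-approximation $e\colon E'\to E$ of $E$: for then $\pi e\colon E'\to B$ is an epimorphism from $E'\in\mathcal{X}$, and $\mathrm{Ker}(\pi e)$ sits in a short exact sequence $0\to K_A\to\mathrm{Ker}(\pi e)\to K_C\to 0$ (with $K_A=\mathrm{Ker}\,e$), hence lies in $\mathcal{Y}$ by extension-closure, so $\pi e$ is the desired approximation of $B$.

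To approximate $E$, I would regard the first sequence above as a class $[\tau]\in\mathrm{Ext}^1(X_C,A)$ and apply $\mathrm{Hom}(X_C,-)$ to $0\to K_A\to X_A\xrightarrow{u}A\to 0$, obtaining exactness of $\mathrm{Ext}^1(X_C,X_A)\xrightarrow{u_*}\mathrm{Ext}^1(X_C,A)\to\mathrm{Ext}^2(X_C,K_A)$. Since $X_C\in\mathcal{X}$, $K_A\in\mathcal{Y}$ and $(\mathcal{X},\mathcal{Y})$ is hereditary, $\mathrm{Ext}^2(X_C,K_A)=0$, so $u_*$ is surjective and $[\tau]=u_*[\tau']$ for some $[\tau']\in\mathrm{Ext}^1(X_C,X_A)$. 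Choosing a representative $0\to X_A\xrightarrow{i'}E'\to X_C\to 0$ of $[\tau']$ and pushing it out along $u$ realizes $[\tau]$, so up to equivalence this pushout is $\tau$ and I get a morphism of short exact sequences which is $u$ on the left term, $\mathrm{id}_{X_C}$ on the right term, and some $e\colon E'\to E$ in the middle. A direct check—using that a pushout of an epimorphism is an epimorphism together with the sequence $0\to X_A\to A\oplus E'\to E\to 0$ associated to the pushout square (or the $3\times 3$ lemma)—shows $e$ is an epimorphism with $\mathrm{Ker}\,e\cong\mathrm{Ker}\,u=K_A\in\mathcal{Y}$, while $E'\in\mathcal{X}$ because $X_A,X_C\in\mathcal{X}$ and $\mathcal{X}$ is closed under extensions. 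Hence $e$ is a special right $\mathcal{X}$-approximation of $E$, completing (1).

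For (2) I would run the dual argument: fix special left $\mathcal{Y}$-approximations $0\to A\to Y_A\to Z_A\to 0$ and $0\to C\to Y_C\to Z_C\to 0$ with $Z_A,Z_C\in\mathcal{X}$ and $Y_A,Y_C\in\mathcal{Y}$; push the given sequence out along $A\to Y_A$ to obtain $0\to B\to E\to Z_A\to 0$ and $0\to Y_A\to E\to C\to 0$; then, viewing the latter as a class in $\mathrm{Ext}^1(C,Y_A)$ and using $\mathrm{Ext}^2(Z_C,Y_A)=0$ (heredity applied to $0\to C\to Y_C\to Z_C\to 0$), conclude that the pullback map $\mathrm{Ext}^1(Y_C,Y_A)\to\mathrm{Ext}^1(C,Y_A)$ is surjective. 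The resulting comparison map gives a monomorphism $E\to E'$ with $\mathrm{Coker}\cong Z_C\in\mathcal{X}$ and $E'\in\mathcal{Y}$ (an extension of $Y_C$ by $Y_A$), i.e.\ a special left $\mathcal{Y}$-approximation of $E$; composing $B\hookrightarrow E\to E'$ then yields one for $B$, its cokernel being an extension of $Z_C$ by $Z_A$ and hence in $\mathcal{X}$.

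The step I expect to be the main obstacle is the bookkeeping around the comparison map produced from the Ext-surjectivity: verifying that the pushout (resp.\ pullback) map is an epimorphism (resp.\ monomorphism) with kernel exactly $K_A$ (resp.\ cokernel exactly $Z_C$), so that ``improving'' one end of the sequence leaves the other end untouched. Once this is pinned down precisely, the remaining content is just the single application of the hereditary hypothesis to an $\mathrm{Ext}^2$-term and the extension-closure of $\mathcal{X}$ and $\mathcal{Y}$.
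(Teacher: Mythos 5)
Your proof is correct and is the standard argument for this result (the paper cites it from Akinci--Alizade and gives no proof of its own). The pullback of the extension along $v:X_C\to C$, followed by lifting the resulting class in $\mathrm{Ext}^1(X_C,A)$ to $\mathrm{Ext}^1(X_C,X_A)$ using $\mathrm{Ext}^2(X_C,K_A)=0$ (heredity) and then pushing out along $u$, is exactly the right mechanism; the bookkeeping you flag as the main obstacle is handled automatically by the fact that the comparison map in a morphism of short exact sequences with identity on the third term inherits the kernel and cokernel of the map on first terms (by the snake lemma, or equivalently bicartesianness of the pushout square with monic top row), giving $e$ epi with $\Ker(e)\cong K_A$, and then $\Ker(\pi e)$ is an extension of $K_C$ by $K_A$; part (2) dualizes cleanly as you describe.
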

 
 \subsection{Gorenstein categories}\label{subsec2.5} 
 For an abelian category $\mathcal{A}$ with enough projective objects and enough injective objects, let $\prescript{}{\mathcal{A}}{\mathcal{P}}$ (respectively, $\prescript{}{\mathcal{A}}{\mathcal{I}}$)  be the full subcategory of $\mathcal{A}$ consisting of projective (respectively, injective) objects. Denote by $\prescript{}{\mathcal{A}}{\mathcal{P}}^{<\infty}$ (respectively, $\prescript{}{\mathcal{A}}{\mathcal{I}}^{<\infty}$) the full subcategory of $\mathcal{A}$ consisting of objects with finite projective (respectively, injective) dimension. Let
 \[\prescript{}{\mathcal{A}}{\mathcal{P}}^{\le 1}=\{X\in\mathcal{A}\;|\;\projdim X\le 1\}\;\text{ and }\;\prescript{}{\mathcal{A}}{\mathcal{I}}^{\le 1}=\{X\in\mathcal{A}\;|\;\injdim X\le 1\}.\]
 
 We recall from \cite{BR} that the \emph{finitistic projective dimension} $\mathrm{FPD}(\mathcal{A})$ of $\mathcal{A}$ is defined by
 \[\mathrm{FPD}(\mathcal{A}):=\sup\{\projdim X\;|\;X\in\prescript{}{\mathcal{A}}{\mathcal{P}}^{<\infty}\},\]
 and that the \emph{finitistic injective dimension} $\mathrm{FID}(\mathcal{A})$ of $\mathcal{A}$ is defined by
 \[\mathrm{FID}(\mathcal{A}):=\sup\{\injdim X\;|\;X\in\prescript{}{\mathcal{A}}{\mathcal{I}}^{<\infty}\}.\]
 Consider the following dimensions of $\mathcal{A}$:
 \[\mathrm{spli}(\mathcal{A}):=\sup\{\projdim I\;|\; I\in\prescript{}{\mathcal{A}}{\mathcal{I}}\}\mbox{ and }\mathrm{silp}(\mathcal{A}):=\sup\{\injdim P\;|\;P\in\prescript{}{\mathcal{A}}{\mathcal{P}}\}.\]
 The following definition generalises the notion of an Iwanaga-Gorenstein ring.
 \begin{dfn}$($\cite[Definition~VIII~2.1]{BR}$)$
 	An abelian category $\mathcal{A}$ with enough projective objects and enough injective objects is called \emph{Gorenstein}, if $\mathrm{spli}(\mathcal{A})<\infty$ and $\mathrm{silp}(\mathcal{A})<\infty$.
 \end{dfn}
 
 A complex $P^{\bullet}$ of projective objects of $\mathcal{A}$ is called \emph{totally acyclic}, if it is acyclic and $\mathrm{Hom}_\mathcal{A}(P^\bullet,Q)$ is acyclic for any projective objective $Q$ of $\mathcal{A}$. An object $X\in\mathcal{A}$ is called \emph{Gorenstein projective}, if there is a totally acyclic complex $P^\bullet$ of projective objects of $\mathcal{A}$ with $X=\mathrm{Im}(P^{-1}\rightarrow P^0)$. We denote by $\mathrm{GP}(\mathcal{A})$ the full subcategory of $\mathcal{A}$ consisting of Gorenstein projective objects.
 \begin{thm}\label{Gor}$($\cite[Theorem~VIII~2.2]{BR}$)$
 	Let $\mathcal{A}$ be a Gorenstein category. Then the following statements hold.
 	\begin{enumerate}[(1)]
 	\item We have $\prescript{}{\mathcal{A}}{\mathcal{P}}^{<\infty}=\prescript{}{\mathcal{A}}{\mathcal{I}}^{<\infty}$.
 	\item We have the equalities $\mathrm{FPD}(\mathcal{A})=\mathrm{spli}(\mathcal{A})=\mathrm{silp}(\mathcal{A})=\mathrm{FID}(\mathcal{A})$.
 	\item $(\mathrm{GP}(\mathcal{A}),\prescript{}{\mathcal{A}}{\mathcal{P}}^{<\infty})$ is a  hereditary complete cotorsion pair.
 	\end{enumerate}
 \end{thm}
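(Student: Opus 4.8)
The plan is to handle the three assertions in turn, using dimension shifting throughout and one genuinely structural lemma about Gorenstein projective objects. Write $n:=\mathrm{spli}(\mathcal{A})$ and $m:=\mathrm{silp}(\mathcal{A})$; both are finite by hypothesis. For (2) I would first show $\mathrm{FPD}(\mathcal{A})\le\mathrm{silp}(\mathcal{A})$ by a minimality argument: if $X\in\prescript{}{\mathcal{A}}{\mathcal{P}}^{<\infty}$ has $\projdim X=k\ge1$, then $\mathrm{Ext}^k_\mathcal{A}(X,P)\ne0$ for some projective $P$ (otherwise the syzygy $\Omega^{k-1}X$ would split off its projective syzygy $\Omega^kX$, contradicting minimality of $k$); since $\injdim P\le\mathrm{silp}(\mathcal{A})$, this forces $k\le\mathrm{silp}(\mathcal{A})$. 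Dually $\mathrm{FID}(\mathcal{A})\le\mathrm{spli}(\mathcal{A})$. Conversely every injective object has finite projective dimension (as $\mathrm{spli}(\mathcal{A})<\infty$), so $\mathrm{spli}(\mathcal{A})\le\mathrm{FPD}(\mathcal{A})$, and dually $\mathrm{silp}(\mathcal{A})\le\mathrm{FID}(\mathcal{A})$. Closing the cycle
\[\mathrm{spli}(\mathcal{A})\le\mathrm{FPD}(\mathcal{A})\le\mathrm{silp}(\mathcal{A})\le\mathrm{FID}(\mathcal{A})\le\mathrm{spli}(\mathcal{A})\]
forces all four to coincide; denote the common value by $n$.

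For (1), take $X$ with $\projdim X<\infty$, hence $\projdim X\le n$ by (2), and choose a projective resolution $0\to P_n\to\cdots\to P_0\to X\to0$; each $P_i$ has $\injdim P_i\le n$ by (2). Breaking the resolution into short exact sequences and propagating the estimate $\injdim C\le\max\{\injdim B,\injdim A-1\}$ (valid for $0\to A\to B\to C\to0$) downward from $P_n$ to $X$ gives $\injdim X\le n$, so $X\in\prescript{}{\mathcal{A}}{\mathcal{I}}^{<\infty}$; the reverse inclusion is the dual argument, which yields $\prescript{}{\mathcal{A}}{\mathcal{P}}^{<\infty}=\prescript{}{\mathcal{A}}{\mathcal{I}}^{<\infty}$.

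The core of the theorem is (3), and the step I expect to be the main obstacle is the structural lemma: an object $G$ is Gorenstein projective if and only if $\mathrm{Ext}^i_\mathcal{A}(G,Q)=0$ for every projective $Q$ and all $i\ge1$. The forward implication is immediate from the definition of a totally acyclic complex. For the converse one must build a complete projective resolution of such a $G$: the left half is an ordinary projective resolution, while the right half is produced by taking an injective coresolution of $G$ and splicing in projective resolutions of its (injective) terms — here the finiteness $\mathrm{spli}(\mathcal{A})=n$ is used decisively, to guarantee that the resulting complex stays exact after applying $\mathrm{Hom}_\mathcal{A}(-,Q)$ for projective $Q$. Granting the lemma, dimension shifting and $\mathrm{silp}(\mathcal{A})=n$ give $\mathrm{Ext}^i_\mathcal{A}(\Omega^nX,Q)\cong\mathrm{Ext}^{i+n}_\mathcal{A}(X,Q)=0$ for all $X$, all projective $Q$ and all $i\ge1$, so every $n$-th syzygy $\Omega^nX$ lies in $\mathrm{GP}(\mathcal{A})$; in particular every object of $\mathcal{A}$ has Gorenstein projective dimension at most $n$, and the standard theory of that dimension (cf. \cite{BR}) then yields, for each $X$, a short exact sequence $0\to K\to G\to X\to0$ with $G\in\mathrm{GP}(\mathcal{A})$ and $\projdim K\le n-1$.

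Finally I would assemble the cotorsion pair. Dimension shifting along a finite projective resolution, together with $\mathrm{Ext}^{\ge1}_\mathcal{A}(G,Q)=0$ for $G\in\mathrm{GP}(\mathcal{A})$ and $Q$ projective, gives $\mathrm{Ext}^{\ge1}_\mathcal{A}\bigl(\mathrm{GP}(\mathcal{A}),\prescript{}{\mathcal{A}}{\mathcal{P}}^{<\infty}\bigr)=0$, so $\prescript{}{\mathcal{A}}{\mathcal{P}}^{<\infty}\subseteq\mathrm{GP}(\mathcal{A})^\perp$, $\mathrm{GP}(\mathcal{A})\subseteq{}^{\perp}\bigl(\prescript{}{\mathcal{A}}{\mathcal{P}}^{<\infty}\bigr)$, and the pair will be hereditary. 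For the reverse inclusions: if $Y\in\mathrm{GP}(\mathcal{A})^\perp$ then $0=\mathrm{Ext}^1_\mathcal{A}(\Omega^nX,Y)\cong\mathrm{Ext}^{n+1}_\mathcal{A}(X,Y)$ for every $X$, so $\injdim Y\le n$ and hence $Y\in\prescript{}{\mathcal{A}}{\mathcal{I}}^{<\infty}=\prescript{}{\mathcal{A}}{\mathcal{P}}^{<\infty}$ by (1); and if $Y\in{}^{\perp}\bigl(\prescript{}{\mathcal{A}}{\mathcal{P}}^{<\infty}\bigr)$, then applying the sequence $0\to K\to G\to Y\to0$ from the previous paragraph and $\mathrm{Ext}^1_\mathcal{A}(Y,K)=0$ (since $\projdim K<\infty$) shows it splits, so $Y$ is a direct summand of $G\in\mathrm{GP}(\mathcal{A})$ and is therefore Gorenstein projective. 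This shows $(\mathrm{GP}(\mathcal{A}),\prescript{}{\mathcal{A}}{\mathcal{P}}^{<\infty})$ is a hereditary cotorsion pair, and completeness follows from \cite[Proposition~7.1.7]{EJ}, since the sequences $0\to K\to G\to X\to0$ constructed above are precisely special right $\mathrm{GP}(\mathcal{A})$-approximations.
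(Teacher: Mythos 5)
The paper does not prove this theorem — it cites \cite[Theorem~VIII~2.2]{BR} verbatim — so there is no internal argument to compare against; you are reconstructing a classical result. Most of your reconstruction is sound: the cyclic chain $\mathrm{spli}\le\mathrm{FPD}\le\mathrm{silp}\le\mathrm{FID}\le\mathrm{spli}$ (with the syzygy minimality argument for $\mathrm{FPD}\le\mathrm{silp}$) gives~(2); propagating $\injdim C\le\max\{\injdim B,\injdim A-1\}$ along a finite projective resolution gives~(1); and the verification of the cotorsion-pair axioms and heredity in~(3), once the intermediate facts are granted, is correct, including the split-summand argument for $^\perp(\prescript{}{\mathcal{A}}{\mathcal{P}}^{<\infty})\subseteq\mathrm{GP}(\mathcal{A})$ and the identification of the sequences $0\to K\to G\to X\to0$ as special precovers.

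The genuine gap is exactly where you expected it: the converse of the structural lemma, that $\Ext^{\ge1}_\mathcal{A}(G,Q)=0$ for all projective $Q$ already forces $G\in\mathrm{GP}(\mathcal{A})$. Your one-sentence sketch (``splice projective resolutions of the terms of an injective coresolution of $G$'') names the right construction but does not carry it out, and the difficulties are real. A naive iteration fails: if $0\to G\to I^0\to C\to 0$ with $I^0$ injective, the long exact sequence gives $\Ext^i_\mathcal{A}(C,Q)\cong\Ext^i_\mathcal{A}(I^0,Q)$ for $i\ge 2$ and an epimorphism onto $\Ext^1_\mathcal{A}(I^0,Q)$ for $i=1$, and these groups need not vanish; so $C$ is not automatically back in $^\perp(\prescript{}{\mathcal{A}}{\mathcal{P}})$ and the process does not self-propagate. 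One must instead form the total complex of a double complex whose columns are projective resolutions of the injective terms $I^j$ --- of uniformly bounded length $\le\mathrm{spli}(\mathcal{A})$, which is what makes the total complex well-defined --- and then prove both its exactness and its $\Hom_\mathcal{A}(-,Q)$-exactness; this is where the real work of \cite[Theorem~VIII~2.2]{BR} sits. Relatedly, the short exact sequence $0\to K\to G\to X\to 0$ with $G\in\mathrm{GP}(\mathcal{A})$ and $\projdim K\le n-1$ that you invoke for completeness is an Auslander--Buchweitz approximation whose construction depends on that same lemma; citing it from \cite{BR} is consistent with the paper's own treatment, but it means your argument still leans on the source it is meant to reproduce.
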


\subsection{Hovey triples}	\label{subsec2.6}

Let $\mathcal{A}$ be an abelian category. We recall from \cite{H} that a triple $(\mathcal{C,F,W})$ of classes of objects of $\mathcal{A}$ is a \emph{$($hereditary$)$ Hovey triple} in $\mathcal{A}$, if it satisfies the following conditions:
\begin{enumerate}[(1)]
	\item Both $(\mathcal{C}\cap\mathcal{W},\mathcal{F})$ and $(\mathcal{C},\mathcal{F}\cap\mathcal{W})$ are (hereditary) complete cotorsion pairs;
	\item The class $\mathcal{W}$ is \emph{thick}, that is, $\mathcal{W}$ is closed under direct summands, and if two out of three terms in a short exact sequence are in $\mathcal{W}$, then so is the third.
\end{enumerate}

\begin{thm}\label{corr}$($Hovey correspondence,\;\cite[Theorem~2.2]{H}$)$
	Let $\mathcal{A}$ be an abelian category. Then there is a one-to-one correspondence Hovey triples with abelian model structures in the sense of \cite[Definition~2]{Gi} in $\mathcal{A}$, given by
	$$(\mathcal{C,\;F,\;W})\longmapsto(\mathrm{Cofib}(\mathcal{A}),\;\mathrm{Fib}(\mathcal{A}),\;\mathrm{Weq}(\mathcal{A})),$$
	where
	\begin{equation*}
		\begin{aligned}
			&\mathrm{Cofib}(\mathcal{A})=\{f\;|\;f\mbox{ is a monomorphism with }\mathrm{Coker}(f)\in\mathcal{C}\},\\ 
				&\mathrm{Fib}(\mathcal{A})=\{g\;|\;g\mbox{ is an epimorphism with }\mathrm{Ker}(g)\in\mathcal{F}\},\\
			&\mathrm{Weq}(\mathcal{A})=\left\{p\circ i\;\left|\;\begin{minipage}{0.5\textwidth}
				$i$ is monic with $\mathrm{Coker}(i)\in\mathcal{C}\cap\mathcal{W}$, and
				$p$ is epic with $\mathrm{Ker}(p)\in\mathcal{F}\cap\mathcal{W}$.
			\end{minipage}\right\}\right.
		\end{aligned}
	\end{equation*}
\end{thm}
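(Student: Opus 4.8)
This is M.~Hovey's correspondence theorem, so the plan is to establish the two directions of the bijection between Hovey triples and abelian model structures. Fix a Hovey triple $(\mathcal{C},\mathcal{F},\mathcal{W})$ and write $\mathrm{Cof}=\mathrm{Cofib}(\mathcal{A})$, $\mathrm{Fib}=\mathrm{Fib}(\mathcal{A})$ and $\mathrm{Weq}=\mathrm{Weq}(\mathcal{A})$ for the associated classes of maps. The forward direction is to show that $(\mathrm{Cof},\mathrm{Fib},\mathrm{Weq})$ satisfies the axioms of an abelian model structure in the sense of \cite{Gi}; the reverse direction is to show that an abelian model structure on $\mathcal{A}$ determines a Hovey triple (via its (co)fibrant and trivial objects) and that the two assignments are mutually inverse.

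For the forward direction I would proceed as follows. \emph{Step 1 (trivial maps).} Show that $f\in\mathrm{Cof}\cap\mathrm{Weq}$ exactly when $f$ is a monomorphism with $\mathrm{Coker}(f)\in\mathcal{C}\cap\mathcal{W}$, and dually $f\in\mathrm{Fib}\cap\mathrm{Weq}$ exactly when $f$ is an epimorphism with $\mathrm{Ker}(f)\in\mathcal{F}\cap\mathcal{W}$; one decomposes the factorization $f=p\circ i$ defining $\mathrm{Weq}$ and uses that $\mathcal{W}$ is thick together with the two cotorsion pairs $(\mathcal{C}\cap\mathcal{W},\mathcal{F})$ and $(\mathcal{C},\mathcal{F}\cap\mathcal{W})$ to recognise the pieces. \emph{Step 2 (retracts).} $\mathrm{Cof}$ and $\mathrm{Fib}$ are retract-closed since cotorsion classes are closed under direct summands, plus the usual retract argument on the associated short exact sequences; $\mathrm{Weq}$ is retract-closed since $\mathcal{W}$ is closed under summands and $\mathrm{Weq}$ is built from the retract-closed classes of Step~1. \emph{Step 3 (lifting).} A trivial cofibration $i$ (mono with $\mathrm{Coker}(i)\in\mathcal{C}\cap\mathcal{W}={}^\perp\mathcal{F}$) has the left lifting property against any fibration $p$ (epi with $\mathrm{Ker}(p)\in\mathcal{F}$): after pulling $p$ back along the bottom map of the lifting square one is reduced to extending a section over the source of $i$ to a section over its target, and the obstruction lies in $\mathrm{Ext}_\mathcal{A}^1(\mathrm{Coker}(i),\mathrm{Ker}(p))$, which vanishes because $(\mathcal{C}\cap\mathcal{W},\mathcal{F})$ is a cotorsion pair; dually a cofibration lifts against a trivial fibration using $(\mathcal{C},\mathcal{F}\cap\mathcal{W})$. \emph{Step 4 (factorization).} Every $f\colon X\to Y$ factors as a cofibration followed by a trivial fibration, and as a trivial cofibration followed by a fibration; these are built by splicing the short exact sequences furnished by the completeness of the two cotorsion pairs with suitable pullbacks/pushouts along $f$, as in \cite[\S2]{H}. \emph{Step 5 (two-out-of-three).} Using Step~1, the factorizations of Step~4, and thickness of $\mathcal{W}$, show that $\mathrm{Weq}$ satisfies the two-out-of-three property by arguing on the cokernels and kernels produced by the factorizations.

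For the reverse direction, given an abelian model structure on $\mathcal{A}$, set $\mathcal{C}=\{X\mid 0\to X\text{ is a cofibration}\}$, $\mathcal{F}=\{X\mid X\to 0\text{ is a fibration}\}$ and $\mathcal{W}=\{X\mid X\to 0\text{ is a weak equivalence}\}$; then $\mathcal{W}$ is thick by two-out-of-three and retract-closure of weak equivalences, and the lifting and factorization axioms translate exactly into the statements that $(\mathcal{C}\cap\mathcal{W},\mathcal{F})$ and $(\mathcal{C},\mathcal{F}\cap\mathcal{W})$ are complete cotorsion pairs. Finally one checks that the two assignments are mutually inverse: passing from a Hovey triple to its model structure and back to the (co)fibrant and trivial objects returns the same triple by Steps~1--4, and conversely the classes of maps distinguished by a model structure are recovered from its Hovey triple by the displayed formulas, again by the characterization in Step~1. \textbf{Main obstacle.} The crux is Step~4 together with Step~1: the factorization constructions must be arranged so that \emph{both} factors simultaneously land in the prescribed classes, which genuinely uses that the two cotorsion pairs are complete \emph{and} compatible (not merely that $\mathcal{W}$ is thick), and the identification of trivial cofibrations and fibrations in Step~1 is what makes it possible to phrase the lifting axioms via Ext-vanishing and two-out-of-three via thickness; once these are in hand, the remaining verifications are routine diagram chases.
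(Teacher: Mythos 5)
The paper does not prove this result; it is recalled verbatim from \cite[Theorem~2.2]{H} (Hovey's correspondence), and only the statement, with citation, appears in the preliminaries. There is therefore no proof in the paper for your attempt to be compared against.

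That said, your sketch is a faithful reconstruction of Hovey's original argument: forward direction via characterizing trivial (co)fibrations, retract closure, lifting from $\mathrm{Ext}^1$-vanishing in the two cotorsion pairs, factorizations from completeness, and two-out-of-three from thickness; reverse direction by reading off the (co)fibrant and trivial objects, then checking the assignments are mutually inverse. You correctly flag the genuine pressure points. Two of them deserve a bit more than the gloss you give: in Step~1 the inclusion $\mathrm{Cof}\cap\mathrm{Weq}\subseteq\{\text{monos with cokernel in }\mathcal{C}\cap\mathcal{W}\}$ is not simply a matter of "recognising the pieces" of the decomposition $f=p\circ i$ --- one has to manipulate the associated short exact sequences (comparing $\mathrm{Coker}(i)$, $\mathrm{Ker}(p)$, $\mathrm{Coker}(f)$ via a pushout/pullback) and then invoke thickness, and this is where Hovey spends real effort; and Step~5 (two-out-of-three) is the most delicate axiom in \cite{H}, relying on the interaction between the two cotorsion pairs and thickness rather than following routinely from Steps~1--4. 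As a \emph{proposal} rather than a complete proof your outline is sound and matches the cited source's route; if you were to expand it into a full argument, Steps~1 and~5 are where the details need to be supplied carefully.
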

\vskip 5pt
Let $\mathcal{M}=(\mathcal{C,F,W})$ be a hereditary Hovey triple in $\mathcal{A}$ and let $W=\mathrm{Weq}(\mathcal{A})$ be given by Theorem~\ref{corr}. The Quillen's homotopy category of $\mathcal{A}$ is the localization $\mathcal{A}[W^{-1}]$, denoted by $\mathrm{Ho}(\mathcal{M})$. For the detail, we refer to \cite[Section~2]{Gi}. We know that if  $\mathcal{M}=(\mathcal{C,F,W})$ is a hereditary Hovey triple in $\mathcal{A}$, then $\mathrm{Ho}(\mathcal{M})$ is a triangulated category; see \cite[Subsection~4.1]{Gi}.

\begin{thm}\label{te}
	Let $\mathcal{A}$ be an abelian category. Assume that $\mathcal{M}=(\mathcal{C,F,W})$ is a hereditary Hovey triple in $\mathcal{A}$. Then the following hold:
	\begin{enumerate}[(1)]
	\item $\mathcal{C}\cap\mathcal{F}$ is a Frobenius category with the canonical exact structure, with $\mathcal{C}\cap\mathcal{F}\cap\mathcal{W}$ as the class of projective-injective objects;
	\item The composition $\mathcal{C}\cap\mathcal{F}\hookrightarrow\mathcal{A}\rightarrow\mathrm{Ho}(\mathcal{M})$ induces a triangle equivalence $\mathrm{Ho}(\mathcal{M})\simeq(\mathcal{C}\cap\mathcal{F})/(\mathcal{C}\cap\mathcal{F}\cap\mathcal{W})$. Here, $(\mathcal{C}\cap\mathcal{F})/(\mathcal{C}\cap\mathcal{F}\cap\mathcal{W})$ is the stable category of $\mathcal{C}\cap\mathcal{F}$ modulo $\mathcal{C}\cap\mathcal{F}\cap\mathcal{W}.$
	\end{enumerate}
\end{thm}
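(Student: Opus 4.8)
The plan is to handle the two assertions separately; throughout write $\mathcal{E}=\mathcal{C}\cap\mathcal{F}$ and $\omega=\mathcal{C}\cap\mathcal{F}\cap\mathcal{W}$. Since $\mathcal{C}$ and $\mathcal{F}$ are halves of cotorsion pairs they are closed under extensions and direct summands, and $\mathcal{W}$ is thick; hence $\mathcal{E}$ is an extension-closed additive subcategory of $\mathcal{A}$, so it inherits the canonical exact structure (its conflations being the short exact sequences of $\mathcal{A}$ with all three terms in $\mathcal{E}$), and $\omega$ is closed under extensions and summands. For part (1) I would first check that every $W\in\omega$ is both projective and injective in $\mathcal{E}$: a conflation $0\to W\to B\to C\to 0$ in $\mathcal{E}$ has $W\in\mathcal{F}\cap\mathcal{W}$ and $C\in\mathcal{C}$, so it splits because $\mathrm{Ext}^1_{\mathcal{A}}(C,W)=0$ by the cotorsion pair $(\mathcal{C},\mathcal{F}\cap\mathcal{W})$, and dually for $0\to A\to B\to W\to 0$ using $(\mathcal{C}\cap\mathcal{W},\mathcal{F})$. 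Next, $\mathcal{E}$ has enough injectives with the injective term taken from $\omega$: for $X\in\mathcal{E}$, completeness of $(\mathcal{C},\mathcal{F}\cap\mathcal{W})$ yields $0\to X\to F'\to C'\to 0$ with $F'\in\mathcal{F}\cap\mathcal{W}$ and $C'\in\mathcal{C}$; then $F'\in\mathcal{C}$ because $X,C'\in\mathcal{C}$ and $\mathcal{C}$ is extension-closed, so $F'\in\omega$, while $C'\in\mathcal{F}$ because $X,F'\in\mathcal{F}$ and $(\mathcal{C}\cap\mathcal{W},\mathcal{F})$ is hereditary, so $C'\in\mathcal{E}$ and the sequence is a conflation in $\mathcal{E}$. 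Dually $\mathcal{E}$ has enough projectives from $\omega$, using completeness of $(\mathcal{C}\cap\mathcal{W},\mathcal{F})$ and heredity of $(\mathcal{C},\mathcal{F}\cap\mathcal{W})$. Finally, any injective (resp.\ projective) object of $\mathcal{E}$ is a direct summand of an object of $\omega$ by these conflations, hence lies in $\omega$; thus the projectives and the injectives of $\mathcal{E}$ both equal $\omega$ and $\mathcal{E}$ is Frobenius, proving (1).

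For part (2), the first step is bifibrant replacement. For $X\in\mathcal{A}$, completeness of $(\mathcal{C},\mathcal{F}\cap\mathcal{W})$ gives a trivial fibration $QX\twoheadrightarrow X$ with $QX\in\mathcal{C}$, and completeness of $(\mathcal{C}\cap\mathcal{W},\mathcal{F})$ applied to $QX$ gives a trivial cofibration $QX\hookrightarrow RQX$ with $RQX\in\mathcal{F}$; since $QX$ and the cokernel lie in $\mathcal{C}$, also $RQX\in\mathcal{C}$, so $RQX\in\mathcal{E}$ and $X\xleftarrow{\sim}QX\xrightarrow{\sim}RQX$. Hence $X$ is isomorphic in $\mathrm{Ho}(\mathcal{M})$ to an object of $\mathcal{E}$, so the functor $\mathcal{E}\hookrightarrow\mathcal{A}\to\mathrm{Ho}(\mathcal{M})$ is essentially surjective; by the fundamental theorem of model categories it identifies $\mathrm{Ho}(\mathcal{M})$ with the category of bifibrant objects modulo the model-theoretic homotopy relation.

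The second step is to match that homotopy relation with the one defining $\mathcal{E}/\omega$. A trivial cofibration (resp.\ trivial fibration) between objects of $\mathcal{E}$ has cokernel (resp.\ kernel) in $\mathcal{C}\cap\mathcal{W}$, which also lies in $\mathcal{F}$ by heredity and is therefore a projective-injective object of $\mathcal{E}$; so the associated conflation splits and the map becomes an isomorphism in $\mathcal{E}/\omega$ — thus every weak equivalence of $\mathcal{E}$ is inverted in $\mathcal{E}/\omega$, and conversely an isomorphism of $\mathcal{E}/\omega$ is a weak equivalence. Together with the fact that two parallel maps of $\mathcal{E}$ become equal in $\mathrm{Ho}(\mathcal{M})$ exactly when their difference factors through an object of $\omega$, this yields an equivalence of categories $\mathrm{Ho}(\mathcal{M})\simeq\mathcal{E}/\omega$. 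To promote it to a triangle equivalence, note that $\mathcal{E}/\omega$ is triangulated by Happel's theorem, with shift the inverse syzygy $\Omega^{-1}$ (computed from any conflation $0\to A\to W\to\Omega^{-1}A\to 0$ in $\mathcal{E}$ with $W\in\omega$) and with distinguished triangles induced by conflations in $\mathcal{E}$, while $\mathrm{Ho}(\mathcal{M})$ is triangulated as recalled before the theorem; one checks that the equivalence carries $\Omega^{-1}$ to the suspension of $\mathrm{Ho}(\mathcal{M})$ — using the enough-injectives conflations of part (1) to embed $A$ into an object of $\omega$ — and carries conflations in $\mathcal{E}$ to distinguished triangles, and since such triangles generate both structures the equivalence is triangulated.

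The main obstacle is this second step of part (2): identifying the model-theoretic homotopy relation on bifibrant objects with ``the difference factors through $\omega$'', and verifying that the two triangulated structures agree. This requires either reproducing the (co)cylinder-object computations for abelian model structures or invoking them from the literature; by contrast, the Frobenius computation in part (1) and the bifibrant replacement are routine cotorsion-pair bookkeeping.
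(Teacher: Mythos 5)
The paper supplies no proof of this theorem: it sits in Subsection~2.6 as a preliminary fact, directly after the remark (citing \cite[Subsection~4.1]{Gi}) that $\mathrm{Ho}(\mathcal{M})$ is triangulated when $\mathcal{M}$ is hereditary. It is a standard result due to Gillespie on hereditary abelian model categories, and the paper evidently expects the reader to import it, so there is no in-paper argument for you to match against.

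Your reconstruction is correct. Part~(1) is exactly the right cotorsion-pair bookkeeping: the two orthogonalities from $(\mathcal{C},\mathcal{F}\cap\mathcal{W})$ and $(\mathcal{C}\cap\mathcal{W},\mathcal{F})$ make $\omega=\mathcal{C}\cap\mathcal{F}\cap\mathcal{W}$ projective-injective in $\mathcal{E}=\mathcal{C}\cap\mathcal{F}$; completeness together with heredity (to keep the new end term inside $\mathcal{E}$) produce enough projectives and injectives with middle term in $\omega$; and closure of $\omega$ under summands pins the projectives and injectives of $\mathcal{E}$ down to exactly $\omega$. In part~(2), the bifibrant replacement and essential surjectivity are routine; the observation that trivial (co)fibrations between objects of $\mathcal{E}$ split in $\mathcal{E}/\omega$ (heredity places the kernel or cokernel in $\omega$, so the conflation is a split exact sequence in the Frobenius category) is right; and you correctly isolate the only substantive remaining point, namely identifying the model-category homotopy relation on bifibrant objects with ``the difference factors through $\omega$'' and matching the two triangulations. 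That is precisely where the cylinder/path-object computations for abelian model structures enter, which is the content of the cited subsection of Gillespie; invoking it rather than reproducing it is the appropriate choice for a result the paper itself treats as an imported preliminary.
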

\subsection{The weakly projective model structures}\label{subsec2.7}

Different from that a Hovey triple involves two complete cotorsion pairs, A. Beligiannis and I. Reiten give a construction of model structures on abelian categories, from only one hereditary complete cotorsion pair; see \cite[Chapter~VIII]{BR}. It is extended to weakly idempotent complete exact categories in \cite{CLZ}.
	
Let $\mathcal{A}$ be an abelian category, $\mathcal{X}$ and $\mathcal{Y}$ two classes of objects of $\mathcal{A}$ which are closed under direct summands and isomorphims. Let $\omega:=\mathcal{X}\cap\mathcal{Y}$. Define the following morphisms classes.
\[\begin{aligned}
	\mathrm{CoFib}_\omega&:=\{\text{monomorphisms with cokernels in }\mathcal{X}\},\\
	\mathrm{Fib}_\omega&:=\{f\;|\;\mathrm{Hom}_\mathcal{A}(W,f) \text{ is surjective for any object }W\in\omega\},\\
	\mathrm{Weq}_{\omega}&:=\left\{p\circ i\;\left|\;\begin{minipage}{0.5\textwidth}
		$i$ is split monic with $\mathrm{Coker}(i)\in\omega$, and
		$p$ is epic with $\mathrm{Ker}(p)\in\mathcal{Y}$.
	\end{minipage}\right\}\right.
\end{aligned}\]
\begin{thm}\label{corr2}$($\cite[Chapter~VIII~Theorems~~4.2 and~4.6]{BR}; $\rm{also}$ \cite[Theorems~1.1 and~1.3]{CLZ}$)$
	Let $\mathcal{A}$ be an abelian category, $S_C$ the class of contravariantly finite hereditary complete cotorsion pairs in $\mathcal{A}$, and $S_M$ the class of weakly projective model structures on $\mathcal{A}$. Then the map $S_C\rightarrow S_M,\;(\mathcal{X,Y})\mapsto\mathcal{M}_\omega:=(\mathrm{CoFib}_\omega,\mathrm{Fib}_\omega,\mathrm{Weq}_\omega)$ is a bijection, where $\omega=\mathcal{X}\cap\mathcal{Y}$. In this case, the associated Quillen homotopy category $\mathrm{Ho}(\mathcal{M}_\omega)=\mathcal{A}[\mathrm{Weq}_\omega^{-1}]$ is equivalent to the additive quotient $\mathcal{X}/\omega$.
\end{thm}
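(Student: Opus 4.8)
The plan is to reconstruct the Beligiannis–Reiten / Chen–Li–Zheng argument in three stages: first verify that $\mathcal{M}_\omega=(\mathrm{CoFib}_\omega,\mathrm{Fib}_\omega,\mathrm{Weq}_\omega)$ really is a weakly projective model structure for any contravariantly finite hereditary complete cotorsion pair $(\mathcal{X},\mathcal{Y})$; then produce an explicit inverse to the assignment $(\mathcal{X},\mathcal{Y})\mapsto\mathcal{M}_\omega$; and finally identify the homotopy category. Throughout, the two structural inputs are the \emph{completeness} of $(\mathcal{X},\mathcal{Y})$, which supplies cotorsion-type factorizations $0\to Y\to X'\to A\to 0$ with $X'\in\mathcal{X}$, $Y\in\mathcal{Y}$, and the \emph{contravariant finiteness of $\omega=\mathcal{X}\cap\mathcal{Y}$}, which supplies special right $\omega$-approximations; the heredity of the pair enters through the fact that $\mathcal{X}$ is closed under kernels of epimorphisms and $\mathcal{Y}$ under cokernels of monomorphisms.

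For the first stage I would begin by pinning down the trivial morphisms: one shows $\mathrm{CoFib}_\omega\cap\mathrm{Weq}_\omega$ equals the class of split monomorphisms with cokernel in $\omega$, and $\mathrm{Fib}_\omega\cap\mathrm{Weq}_\omega$ equals the class of epimorphisms with kernel in $\mathcal{Y}$. Retract-closure of the three classes is routine, and two-out-of-three for $\mathrm{Weq}_\omega$ follows from diagram chases after rewriting a weak equivalence in its normal form $p\circ i$. The lifting axioms then reduce to $\mathrm{Ext}^1_\mathcal{A}(\mathcal{X},\mathcal{Y})=0$ (cofibrations lift against trivial fibrations) and to the relative projectivity of objects of $\omega$ with respect to epimorphisms having kernel in $\mathcal{Y}$ (trivial cofibrations lift against fibrations). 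For the factorizations: applying completeness of $(\mathcal{X},\mathcal{Y})$ to a mapping-cylinder-type pushout built from a morphism $f\colon A\to B$ yields the (cofibration, trivial fibration) factorization; the (trivial cofibration, fibration) factorization is obtained by forming $A\to A\oplus W\to B$ where $W\to B$ is a special right $\omega$-approximation, and then correcting by a pullback so that the second map lies in $\mathrm{Fib}_\omega$.

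For the second and third stages: injectivity of the map is immediate once one observes that $\mathcal{M}_\omega$ recovers $\mathcal{X}$ as its class of cofibrant objects and $\mathcal{Y}$ as $\{Y\mid Y\to 0\text{ is a trivial fibration}\}$, whence $\omega=\mathcal{X}\cap\mathcal{Y}$ is recovered too. For surjectivity one starts from an abstract weakly projective model structure $\mathcal{M}$ on $\mathcal{A}$, defines $\mathcal{X}$ to be its cofibrant objects and $\mathcal{Y}$ to be the $Y$ with $Y\to 0$ trivially fibrant, and checks, using the model axioms and the defining shape of the cofibrations and fibrations of a weakly projective structure, that $(\mathcal{X},\mathcal{Y})$ is a contravariantly finite hereditary complete cotorsion pair with $\mathcal{M}=\mathcal{M}_\omega$. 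For the homotopy category, note that every object is $\mathrm{Fib}_\omega$-fibrant, so the fibrant–cofibrant objects are exactly $\mathcal{X}$ and general model-category theory gives $\mathrm{Ho}(\mathcal{M}_\omega)\simeq\mathcal{X}/\!\sim$; one then shows that for parallel $f,g\colon X\to X'$ in $\mathcal{X}$ one has $f\sim g$ if and only if $f-g$ factors through an object of $\omega$, by building a good cylinder object from the fold map $X\oplus X\to X$ and arranging the homotopy data to live in $\omega$ via a special right $\omega$-approximation. This identifies $\mathrm{Ho}(\mathcal{M}_\omega)$ with the additive quotient $\mathcal{X}/\omega$.

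The step I expect to be the main obstacle is the (trivial cofibration, fibration) factorization together with the companion lifting axiom. Unlike in Hovey's two-cotorsion-pair setting, $\mathrm{Fib}_\omega$ is \emph{not} defined by prescribing a class of kernels, so the completeness of $(\mathcal{X},\mathcal{Y})$ does not apply to it directly; one must interlace completeness with the contravariant finiteness of $\omega$ through careful pullback constructions. This is exactly the place where the hypothesis that $(\mathcal{X},\mathcal{Y})$ is contravariantly finite—rather than merely complete—is indispensable, and it is what distinguishes the one-cotorsion-pair model structures of Beligiannis–Reiten from the Hovey correspondence.
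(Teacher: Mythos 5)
This theorem is quoted in the paper without proof---the paper simply cites \cite[Chapter~VIII, Theorems~4.2 and 4.6]{BR} and \cite[Theorems~1.1 and~1.3]{CLZ}---so there is no ``paper's own proof'' to compare against; what you have attempted is a blind reconstruction of the argument in those sources. Your outline does capture the main architecture of that argument accurately: the identification of trivial cofibrations as split monomorphisms with cokernel in $\omega$ and of trivial fibrations as epimorphisms with kernel in $\mathcal{Y}$; the reduction of the two lifting axioms to $\mathrm{Ext}^1_{\mathcal{A}}(\mathcal{X},\mathcal{Y})=0$ and to the relative projectivity of $\omega$ against $\mathcal{Y}$-kernelled epimorphisms; the observation that every object is $\mathrm{Fib}_\omega$-fibrant so the cofibrant-fibrant objects are exactly $\mathcal{X}$ and the homotopy relation becomes ``difference factors through $\omega$''; and especially your diagnosis that the (trivial cofibration, fibration) factorization is the step where contravariant finiteness of $\omega$ enters essentially, because $\mathrm{Fib}_\omega$ is not cut out by a class of kernels and completeness alone cannot supply the factorization. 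That diagnosis is precisely what separates the Beligiannis--Reiten one-pair construction from the Hovey two-pair correspondence. A small correction: the reference \cite{CLZ} is to Cui, Lu and Zhang, not ``Chen--Li--Zheng''. Also note that your proposal presupposes a definition of ``weakly projective model structure'' (needed to even state the inverse map in your second stage), and that definition does not appear in the present paper; it must be imported from \cite{BR} or \cite{CLZ}. Finally, the sketch of the (cofibration, trivial fibration) factorization via a mapping-cylinder pushout is left at a level of detail where one cannot yet see that the resulting kernel genuinely lies in $\mathcal{Y}$; in the sources this requires combining both halves of completeness, not just the $0\to Y\to X'\to A\to 0$ direction you single out at the start. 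None of these points is a fatal gap, but they are the places where a fully written-out proof would need care.
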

\section{Cotorsion pairs in extensions of abelian categories}\label{sec3}

In this section, we will see how cotorsion pairs in the category $\mathcal{B}$ induce cotorsion pairs in the extension of $\mathcal{B}$; See Theorems~\ref{thm1} and ~\ref{thm2}. We also study the completeness in Theorem~\ref{thm5}. Finally, we state the dual versions of the main results for $\zeta$-coextensions of abelian categories without proofs.

\subsection{Special classes over extensions of abelian categories}\label{3.1}
Let $\mathcal{B}\ltimes_\eta\mathsf{F}$ be an $\eta$-extension of $\mathcal{B}$. For $(X,f)\in\mathcal{B}\ltimes_\eta\mathsf{F}$, we denote by $\mathrm{Com}(X,f)$ the sequence $\mathsf{F}^2(X)\xrightarrow{\mathsf{F}(f)-\eta_X}\mathsf{F}(X)\xrightarrow{f}X$.

 For a class $\mathcal{X}$ of objects of $\mathcal{B}$, define
\[\mathsf{U}^{-1}(\mathcal{X}):=\left\{(X,f)\in\mathcal{B}\ltimes_\eta\mathsf{F}\;|\;X\in\mathcal{X}\right\};\]
\[\Delta(\mathcal{X}):=\left\{(X,f)\in\mathcal{B}\ltimes_\eta\mathsf{F}\;|\;\mathrm{Com}(X,f)\text{ is exact and }\mathrm{Coker}(f)\in\mathcal{X}\right\};\]
%\[\mathsf{T}(\mathcal{X}):=\left\{\mathsf{T}(X)\;|\;X\in\mathcal{X}\right\}.\]
It is easy to check that $\mathsf{T}(\mathcal{X})\subseteq\Delta(\mathcal{X})$. It is clear that if $\mathsf{F}^2=0$, then 
\[\Delta(\mathcal{X})=\{(X,f)\in\mathcal{B}\ltimes\mathsf{F}\;|\;f\mbox{ is a monomorphism and }\mathrm{Coker}(f)\in\mathcal{X}\}.\]

\subsection{Some lemmas for extension groups} 

The proof of the following lemma is inspired by \cite[Corollary~4.2]{B2}; see also \cite{K}.

\begin{lem}\label{ext1}
	Let $\mathcal{B}\ltimes_\eta\mathsf{F}$ be an $\eta$-extension of $\mathcal{B}$ and $X\in\mathcal{B}$. The following hold:
	\begin{enumerate}[(1)]
		\item $\mathbb{L}_i\mathsf{F}(X)\cong\mathsf{U}\mathbb{L}_i\mathsf{T}(X)$ for all $i\ge 1$.
		\item If $\mathbb{L}_i\mathsf{F}(X)=0$ for $1\le i\le n$, then $\mathrm{Ext}_\mathcal{B\ltimes_\eta\mathsf{F}}^i(\mathsf{T}(X),(Y,f))\cong\mathrm{Ext}_\mathcal{B}^i(X,Y)$ for any $1\le i\le n$ and $(Y,f)\in\mathcal{B}\ltimes_\eta\mathsf{F}$.
	\end{enumerate}
\end{lem}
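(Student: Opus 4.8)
The plan is to build a convenient projective resolution of $\mathsf{T}(X)$ in $\mathcal{B}\ltimes_\eta\mathsf{F}$ from a projective resolution of $X$ in $\mathcal{B}$, and then compute $\mathrm{Ext}$ by applying $\mathrm{Hom}_{\mathcal{B}\ltimes_\eta\mathsf{F}}(-,(Y,f))$ and comparing with the adjunction $(\mathsf{T},\mathsf{U})$ from Proposition~\ref{pro1}(2). First, for (1): take a projective resolution $\cdots\to P_1\to P_0\to X\to 0$ in $\mathcal{B}$. Since $\mathsf{T}$ is a left adjoint (to the exact functor $\mathsf{U}$), it is right exact and sends projectives to projectives by Proposition~\ref{pro1}(3); applying $\mathsf{T}$ gives a complex $\cdots\to\mathsf{T}(P_1)\to\mathsf{T}(P_0)\to\mathsf{T}(X)\to0$ of projectives in $\mathcal{B}\ltimes_\eta\mathsf{F}$, so its homology in degree $i$ computes $\mathbb{L}_i\mathsf{T}(X)$. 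Now apply the exact functor $\mathsf{U}$ (Proposition~\ref{pro1}(1)): on underlying objects $\mathsf{U}\mathsf{T}(P_n)=P_n\oplus\mathsf{F}(P_n)$, and the complex $\mathsf{U}\mathsf{T}(P_\bullet)$ splits (up to the appropriate filtration argument) as the mapping-cone-type total complex of $P_\bullet$ and $\mathsf{F}(P_\bullet)$; tracking homology, the $P_\bullet$-part is acyclic (it resolves $X$) and the $\mathsf{F}(P_\bullet)$-part has homology $\mathbb{L}_i\mathsf{F}(X)$. More carefully, one uses that $\mathsf{U}\mathsf{T} \cong \mathrm{Id}\oplus\mathsf{F}$ as functors (from the formula $\mathsf{T}(X)=(X\oplus\mathsf{F}(X),t_X)$), so $\mathsf{U}\mathsf{T}(P_\bullet)\cong P_\bullet\oplus\mathsf{F}(P_\bullet)$ as complexes, whence $H_i(\mathsf{U}\mathsf{T}(P_\bullet))\cong H_i(P_\bullet)\oplus H_i(\mathsf{F}(P_\bullet))=0\oplus\mathbb{L}_i\mathsf{F}(X)$ for $i\ge1$. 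Since $\mathsf{U}$ is exact it commutes with homology, giving $\mathsf{U}\mathbb{L}_i\mathsf{T}(X)\cong\mathbb{L}_i\mathsf{F}(X)$.

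For (2): the hypothesis $\mathbb{L}_i\mathsf{F}(X)=0$ for $1\le i\le n$ together with part (1) tells us that $\mathsf{U}\mathbb{L}_i\mathsf{T}(X)=0$, and since $\mathsf{U}$ is faithful exact (it detects exactness by Proposition~\ref{pro1}(1)), this forces $\mathbb{L}_i\mathsf{T}(X)=0$ for $1\le i\le n$. Hence the complex $\mathsf{T}(P_\bullet)$ of projectives is exact in degrees $1,\dots,n$; that is, $\mathsf{T}(P_n)\to\cdots\to\mathsf{T}(P_0)\to\mathsf{T}(X)\to0$ is the beginning of a genuine projective resolution of $\mathsf{T}(X)$ (we may splice in a projective resolution of the $(n+1)$st syzygy past degree $n$, but only degrees $\le n$ matter). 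Now compute: $\mathrm{Ext}_{\mathcal{B}\ltimes_\eta\mathsf{F}}^i(\mathsf{T}(X),(Y,f))$ is the $i$th cohomology of $\mathrm{Hom}_{\mathcal{B}\ltimes_\eta\mathsf{F}}(\mathsf{T}(P_\bullet),(Y,f))$. By the adjunction $(\mathsf{T},\mathsf{U})$ this complex is naturally isomorphic to $\mathrm{Hom}_\mathcal{B}(P_\bullet,\mathsf{U}(Y,f))=\mathrm{Hom}_\mathcal{B}(P_\bullet,Y)$, whose $i$th cohomology is $\mathrm{Ext}_\mathcal{B}^i(X,Y)$ since $P_\bullet$ is a projective resolution of $X$ in $\mathcal{B}$. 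This gives the desired isomorphism for $1\le i\le n$.

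The main obstacle I anticipate is making the comparison in part (1) fully rigorous: one must check that $\mathsf{U}\mathsf{T}(P_\bullet)\cong P_\bullet\oplus\mathsf{F}(P_\bullet)$ holds \emph{as complexes} (not merely degreewise), i.e.\ that the differentials of $\mathsf{T}(P_\bullet)$, after applying $\mathsf{U}$, respect this direct-sum decomposition; this follows from the explicit form $\mathsf{T}(\alpha)=\left(\begin{smallmatrix}\alpha&0\\0&\mathsf{F}(\alpha)\end{smallmatrix}\right)$, which is block-diagonal, so the only genuine content is that the decomposition is functorial and thus compatible with differentials. A secondary subtlety is that $\mathsf{T}(P_\bullet)\to\mathsf{T}(X)$ need not be acyclic beyond degree $n$, so one should phrase the argument in terms of a truncation or explicitly note that only the first $n$ cohomology groups are being computed; alternatively, one observes that the higher derived functors $\mathbb{L}_i\mathsf{T}$ are exactly measuring this failure, and part (1) plus faithful exactness of $\mathsf{U}$ converts the vanishing of $\mathbb{L}_i\mathsf{F}(X)$ into what is needed. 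Everything else is a routine application of the adjunction together with Proposition~\ref{pro1}.
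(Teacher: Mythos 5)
Your proof is correct, and part (2) is essentially identical to the paper's: vanishing of $\mathbb{L}_i\mathsf{T}(X)$ gives an exact complex $\mathsf{T}(P_{n+1})\to\cdots\to\mathsf{T}(P_0)\to\mathsf{T}(X)\to0$ of projectives, and the adjunction $(\mathsf{T},\mathsf{U})$ transports $\mathrm{Hom}_{\mathcal{B}\ltimes_\eta\mathsf{F}}(\mathsf{T}(P_\bullet),(Y,f))$ to $\mathrm{Hom}_\mathcal{B}(P_\bullet,Y)$.

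For part (1) you take a slightly slicker route than the paper. You observe that $\mathsf{U}\mathsf{T}\cong\mathrm{Id}_\mathcal{B}\oplus\mathsf{F}$ as additive functors (which follows immediately from $\mathsf{U}\mathsf{T}(X)=X\oplus\mathsf{F}(X)$ and the block-diagonal form of $\mathsf{T}(\alpha)$), so $\mathsf{U}\mathsf{T}(P_\bullet)\cong P_\bullet\oplus\mathsf{F}(P_\bullet)$ as chain complexes, and then compute all derived functors at once: $\mathsf{U}\mathbb{L}_i\mathsf{T}(X)\cong H_i(P_\bullet)\oplus H_i(\mathsf{F}(P_\bullet))=\mathbb{L}_i\mathsf{F}(X)$ for $i\ge1$, using exactness of $\mathsf{U}$ to commute it past homology. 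The paper instead handles $i=1$ explicitly via a single short exact sequence $0\to K\to P\to X\to0$, identifying both $\mathbb{L}_1\mathsf{F}(X)$ and $\mathsf{U}\mathbb{L}_1\mathsf{T}(X)$ as $\Ker(\mathsf{F}(i))$ (the $\Ker(i)$ summand dies because $i$ is monic), and then reduces general $i$ to $i=1$ by shifting along syzygies. Your version collapses the dimension-shifting into one homology computation and is arguably cleaner; both ultimately rest on the same block-diagonal structure of $\mathsf{UT}$ and the exactness of $\mathsf{U}$.

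One small point worth making fully explicit in your write-up of (2): faithfulness of $\mathsf{U}$ on objects (i.e.\ $\mathsf{U}(A)=0\Rightarrow A=0$) is what converts $\mathsf{U}\mathbb{L}_i\mathsf{T}(X)=0$ into $\mathbb{L}_i\mathsf{T}(X)=0$; this follows from Proposition~\ref{pro1}(1), as you indicate, but it is the crux of the step and deserves to be stated. Your secondary concern about truncation is also well-placed: the exact complex $\mathsf{T}(P_{n+1})\to\cdots\to\mathsf{T}(P_0)\to\mathsf{T}(X)\to0$ is only the beginning of a projective resolution, but since $\mathrm{Ext}^i$ for $i\le n$ depends only on a resolution through degree $n+1$, this is enough; the paper uses the same truncation silently.
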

\begin{proof}
	Let \begin{equation}\label{lemseq}0\rightarrow K\xrightarrow{i}P\xrightarrow{a}X\rightarrow0\end{equation} be an exact sequence in $\mathcal{B}$ with $P$ projective. Applying the functor $\mathsf{F}$ to (\ref{lemseq}) gives the following exact sequence
	\[0\rightarrow\mathbb{L}_1\mathsf{F}(X)\rightarrow\mathsf{F}(K)\xrightarrow{\mathsf{F}(i)}\mathsf{F}(P)\xrightarrow{\mathsf{F}(a)}\mathsf{F}(X)\rightarrow0.\]
	Applying the functor $\mathsf{T}$ to (\ref{lemseq}) gives the following exact sequence
	\[0\rightarrow\mathbb{L}_1\mathsf{T}(X)\rightarrow\mathsf{T}(K)\xrightarrow{\mathsf{T}(i)}\mathsf{T}(P)\xrightarrow{\mathsf{T}(a)}\mathsf{T}(X)\rightarrow0.\]
	Then we have $\mathbb{L}_1\mathsf{F}(X)\cong\Ker(\mathsf{F}(i))$ and $\mathbb{L}_1\mathsf{T}(X)\cong\Ker(\mathsf{T}(i)).$ Since $\mathsf{U}$ is exact, it follows that $\mathsf{U}(\Ker \mathsf{T}(i))\cong\Ker(\mathsf{UT}(i))$. We observe that $\mathsf{UT}(i)=\begin{pmatrix}
		i&0\\
		0	&\mathsf{F}(i)
	\end{pmatrix}.$ Then we have
	\[\mathsf{U}\mathbb{L}_1\mathsf{T}(X)\cong\mathsf{U}(\Ker \mathsf{T}(i))\cong\Ker(\mathsf{UT}(i))\cong\Ker(\mathsf{F}(i))\cong\mathbb{L}_1\mathsf{F}(X).\]
	Let 
	\begin{equation}\label{res}P_\bullet:\;\;\;\cdots\rightarrow P_2\xrightarrow{d_2} P_1\xrightarrow{d_1}P_0\xrightarrow{d_0}X\rightarrow0\end{equation} 
	be a projective resolution of $X$. Define $K_i=\Ker(d_i)$ for all $i\ge 0$. Then for $i\ge 2$, we have
	\[\mathbb{L}_i\mathsf{F}(X)\cong\mathbb{L}_1\mathsf{F}(K_{i-2})\text{ and }\mathbb{L}_i\mathsf{T}(X)\cong\mathbb{L}_1\mathsf{T}(K_{i-2}).\]
	Therefore, we have
	\[\mathsf{U}\mathbb{L}_i\mathsf{T}(X)\cong\mathsf{U}\mathbb{L}_1\mathsf{T}(K_{i-2})\cong\mathbb{L}_1\mathsf{F}(K_{i-2})\cong\mathbb{L}_i\mathsf{F}(X).\]
	Then we are done.
	
	(2) If $\mathbb{L}_i\mathsf{F}(X)=0$ for all $1\le i\le n$, then $\mathbb{L}_i\mathsf{T}(X)=0$ for all $1\le i\le n$ by (1). Applying the functor $\mathsf{T}$ to (\ref{res}) gives an exact sequence
	\begin{equation*}\mathsf{T}(P_\bullet):\;\;\mathsf{T}(P_{n+1})\rightarrow\mathsf{T}(P_n)\rightarrow\cdots\rightarrow\mathsf{T}(P_0)\rightarrow\mathsf{T}(X)\rightarrow0.\end{equation*}
	By Proposition~\ref{pro1}~(3), each $\mathsf{T}(P_i)$ is projective. Using the adjunction $(\mathsf{T,U})$, for $1\le i\le n$, we get
	\begin{equation*}\begin{aligned}\mathrm{Ext}_{\mathcal{B}\ltimes_\eta\mathsf{F}}^i(\mathsf{T}(X),(Y,f))&=H^i(\mathrm{Hom}_{\mathcal{B}\ltimes_\eta\mathsf{F}}(\mathsf{T}(P_\bullet),(Y,f)))\\&\cong H^i(\mathrm{Hom}_{\mathcal{B}}(P_\bullet,\mathsf{U}(Y,f)))\\
			&=\mathrm{Ext}_{\mathcal{B}}^i(X,Y).\end{aligned}\end{equation*}
		Then we are done.
\end{proof}

\begin{lem}\label{Com}
	Let $(X,f)\in\mathcal{B}\ltimes_\eta\mathsf{F}$, $\rho:X\rightarrow\mathrm{Coker}(f)$ the canonical epimorphism and $\pi:\mathsf{F}(X)\rightarrow\mathrm{Coker}(\mathsf{F}(f)-\eta_X)$ the canonical epimorphism. 
	\begin{enumerate}[(1)]
		\item There exists a morphism $\gamma:\mathrm{Coker}(\mathsf{F}(f)-\eta_X)\rightarrow X$ such that $f=\gamma\circ\pi$ and the sequence $\mathrm{Coker}(\mathsf{F}(f)-\eta_X)\xrightarrow{\gamma}X\xrightarrow{\rho}\mathrm{Coker}(f)\rightarrow0$ is exact.
		\item The morphism $\gamma$ is a monomorphism if and only if the sequence $\mathrm{Com}(X,f)$ is exact.
	\end{enumerate}
\end{lem}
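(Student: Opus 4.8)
The plan is to extract both statements from the single defining relation of objects of $\mathcal{B}\ltimes_\eta\mathsf{F}$, namely $f\circ\mathsf{F}(f)=f\circ\eta_X$, together with the universal properties of kernels and cokernels in the abelian category $\mathcal{B}$.

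For part (1), I would rewrite the defining relation as $f\circ(\mathsf{F}(f)-\eta_X)=0$. Since $\pi$ is the cokernel of $\mathsf{F}(f)-\eta_X$, the universal property yields a unique $\gamma$ with $f=\gamma\circ\pi$. Because $\pi$ is an epimorphism, $\mathrm{Im}(\gamma)=\mathrm{Im}(\gamma\circ\pi)=\mathrm{Im}(f)$, and $\mathrm{Im}(f)=\mathrm{Ker}(\rho)$ since $\rho$ is the cokernel of $f$; this gives exactness of the displayed three-term sequence at $X$, while exactness at $\mathrm{Coker}(f)$ is just the surjectivity of $\rho$. So part (1) is essentially immediate.

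For part (2), observe first that $\mathrm{Com}(X,f)$, being a three-term sequence $\mathsf{F}^2(X)\to\mathsf{F}(X)\to X$, is exact precisely when it is exact at $\mathsf{F}(X)$, i.e. when $\mathrm{Ker}(f)=\mathrm{Im}(\mathsf{F}(f)-\eta_X)=\mathrm{Ker}(\pi)$ (the second equality because $\pi=\mathrm{coker}(\mathsf{F}(f)-\eta_X)$). Now $f=\gamma\circ\pi$ with $\pi$ epic, so I would form the pullback of $\pi$ along the inclusion $\mathrm{Ker}(\gamma)\hookrightarrow\mathrm{Coker}(\mathsf{F}(f)-\eta_X)$; this pullback is canonically $\mathrm{Ker}(\gamma\circ\pi)=\mathrm{Ker}(f)$, the induced map $\mathrm{Ker}(f)\to\mathrm{Ker}(\gamma)$ is epic (a pullback of an epimorphism) with kernel $\mathrm{Ker}(\pi)$, whence $\mathrm{Ker}(f)/\mathrm{Ker}(\pi)\cong\mathrm{Ker}(\gamma)$. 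Therefore $\gamma$ is monic $\iff\mathrm{Ker}(\gamma)=0\iff\mathrm{Ker}(f)=\mathrm{Ker}(\pi)\iff\mathrm{Com}(X,f)$ is exact.

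The only step that takes genuine care is the ``kernel of a composite with an epimorphism'' fact used in part (2); rather than element-chasing, which is not directly legitimate in a general abelian category, I would give the short pullback argument above, or alternatively invoke the Freyd--Mitchell embedding to reduce to a module-theoretic verification. Everything else is routine bookkeeping with universal properties, and I expect no further obstacle.
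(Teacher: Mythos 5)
Your proposal is correct and follows essentially the same route as the paper. For part (1) the paper verifies directly that $\rho$ satisfies the universal property of the cokernel of $\gamma$ (using that $\pi$ is epic to get $\rho\circ\gamma=0$ and the factorisation of any $g$ with $g\circ\gamma=0$), whereas you compute $\mathrm{Im}(\gamma)=\mathrm{Im}(f)=\mathrm{Ker}(\rho)$; these are two standard and interchangeable ways to establish exactness of $\mathrm{Coker}(\mathsf{F}(f)-\eta_X)\xrightarrow{\gamma}X\xrightarrow{\rho}\mathrm{Coker}(f)\to 0$. For part (2) the paper simply writes ``it is straightforward,'' so your pullback argument producing the short exact sequence $0\to\mathrm{Ker}(\pi)\to\mathrm{Ker}(f)\to\mathrm{Ker}(\gamma)\to 0$ (and hence $\gamma$ monic $\iff\mathrm{Ker}(f)=\mathrm{Ker}(\pi)=\mathrm{Im}(\mathsf{F}(f)-\eta_X)$) is a welcome filling-in of the detail the paper omits, and it is done in a way legitimate in a general abelian category rather than by element-chasing.
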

\begin{proof}
	(1) Since $(X,f)\in\mathcal{B}\ltimes_\eta\mathsf{F}$, we get $f\circ(\mathsf{F}(f)-\eta_X)=0$. Then there exists a unique morphism $\gamma:\mathrm{Coker}(\mathsf{F}(f)-\eta_X)\rightarrow X$ such that $f=\gamma\circ\pi$.
	
	For any morphism $g:X\rightarrow Y$, if $g\circ\gamma=0$, then $g\circ f=g\circ\gamma\circ\pi=0$. Thus there exist a morphism $s:\mathrm{Coker}(f)\rightarrow Y$ such that $g=s\circ\rho$. Since $\rho$ is an epimorphism, it follows that $\mathrm{Coker}(\gamma)\cong\mathrm{Coker}(f)$. Then the sequence $\mathrm{Coker}(\mathsf{F}(f)-\eta_X)\xrightarrow{\gamma}X\xrightarrow{\rho}\mathrm{Coker}(f)\rightarrow0$ is exact.
	
	(2)It is straightforward.
\end{proof}

\begin{lem}\label{Ext1}
	Let $(X,f)\in\mathcal{B}\ltimes_\eta\mathsf{F}$. If the sequence $\mathrm{Com}(X,f)$ is exact, then $\mathrm{Ext}_{\mathcal{B}\ltimes_\eta\mathsf{F}}^1((X,f),\mathsf{Z}(Y))\cong\mathrm{Ext}_\mathcal{B}^1(\mathrm{Coker}(f),Y)$ for any $Y\in\mathcal{B}$.
\end{lem}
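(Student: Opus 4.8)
The plan is to exploit the adjoint pair $(\mathsf{C},\mathsf{Z})$ of Proposition~\ref{pro1}(2), giving a natural isomorphism $\mathrm{Hom}_{\mathcal{B}\ltimes_\eta\mathsf{F}}(-,\mathsf{Z}(Y))\cong\mathrm{Hom}_\mathcal{B}(\mathsf{C}(-),Y)$. Since $\mathsf{Z}$ and $\mathsf{U}$ are exact (Proposition~\ref{pro1}(1)), their left adjoints $\mathsf{C}$ and $\mathsf{T}$ are right exact and preserve projectives; as $\mathcal{B}\ltimes_\eta\mathsf{F}$ has enough projectives (Proposition~\ref{pro1}(4)), $\mathsf{C}$ admits left derived functors $\{\mathbb{L}_i\mathsf{C}\}_{i\ge 0}$ with $\mathbb{L}_0\mathsf{C}=\mathsf{C}$, and moreover $\mathsf{CT}\cong\mathsf{Id}_\mathcal{B}$ while every projective of $\mathcal{B}\ltimes_\eta\mathsf{F}$ is some $\mathsf{T}(Q)$ with $Q\in\mathcal{B}$ projective (Proposition~\ref{pro1}(2),(3)). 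Now fix a projective resolution $\mathsf{T}(P_\bullet)\to(X,f)$ over $\mathcal{B}\ltimes_\eta\mathsf{F}$ with each $P_i\in\mathcal{B}$ projective. The adjunction gives an isomorphism of complexes $\mathrm{Hom}_{\mathcal{B}\ltimes_\eta\mathsf{F}}(\mathsf{T}(P_\bullet),\mathsf{Z}(Y))\cong\mathrm{Hom}_\mathcal{B}(P_\bullet,Y)$, and applying $\mathsf{C}$ produces a complex $P_\bullet\to\mathrm{Coker}(f)$ that is exact at $P_0$ and at $\mathrm{Coker}(f)$, with homology in degree $i\ge 1$ equal to $\mathbb{L}_i\mathsf{C}(X,f)$. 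Hence, if $\mathbb{L}_1\mathsf{C}(X,f)=0$, this complex is exact far enough that $\mathrm{Ext}^1_{\mathcal{B}\ltimes_\eta\mathsf{F}}((X,f),\mathsf{Z}(Y))\cong H^1(\mathrm{Hom}_\mathcal{B}(P_\bullet,Y))\cong\mathrm{Ext}^1_\mathcal{B}(\mathrm{Coker}(f),Y)$, naturally in $Y$. So the lemma reduces to showing that $\mathbb{L}_1\mathsf{C}(X,f)=0$ whenever $\mathrm{Com}(X,f)$ is exact, and this is where the work lies.

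First I would prove the auxiliary vanishing $\mathbb{L}_1\mathsf{C}(\mathsf{T}(W))=0$ for all $W\in\mathcal{B}$ — the subtlety being that $\mathsf{T}$ preserves projectives while not being exact. Choose an exact sequence $0\to W_1\xrightarrow{i}P\to W\to 0$ in $\mathcal{B}$ with $P$ projective. Right exactness of $\mathsf{T}$ gives $\mathrm{Im}(\mathsf{T}(i))=\Ker(\mathsf{T}(P)\to\mathsf{T}(W))$, hence exact sequences $\mathsf{T}(W_1)\twoheadrightarrow Z$ and $0\to Z\to\mathsf{T}(P)\to\mathsf{T}(W)\to 0$ with $Z:=\mathrm{Im}(\mathsf{T}(i))$ and $\mathsf{T}(P)$ projective. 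Since $\mathbb{L}_1\mathsf{C}(\mathsf{T}(P))=0$, the long exact sequence of $\mathbb{L}_\bullet\mathsf{C}$ yields $\mathbb{L}_1\mathsf{C}(\mathsf{T}(W))\cong\Ker\bigl(\mathsf{C}(Z)\to\mathsf{C}(\mathsf{T}(P))=P\bigr)$. As $\mathsf{T}(i)$ factors as $\mathsf{T}(W_1)\twoheadrightarrow Z\hookrightarrow\mathsf{T}(P)$, applying $\mathsf{C}$ (and $\mathsf{CT}\cong\mathsf{Id}_\mathcal{B}$) writes $i\colon W_1\to P$ as a composite $W_1\twoheadrightarrow\mathsf{C}(Z)\to P$; the first arrow is epic with monic composite, hence an isomorphism, so $\mathsf{C}(Z)\to P$ is identified with $i$ and is monic. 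Thus $\mathbb{L}_1\mathsf{C}(\mathsf{T}(W))=0$.

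Next I would deduce $\mathbb{L}_1\mathsf{C}(X,f)=0$ from the short exact sequence $0\to(\mathsf{F}(X),\eta_X-\mathsf{F}(f))\xrightarrow{\iota}\mathsf{T}(X)\xrightarrow{\varepsilon}(X,f)\to 0$, where $\varepsilon$ is the counit of $(\mathsf{T},\mathsf{U})$; its underlying morphism is $(1_X,f)\colon X\oplus\mathsf{F}(X)\to X$, which is an epimorphism, and a direct check using $f\circ\mathsf{F}(f)=f\circ\eta_X$ identifies its kernel with $(\mathsf{F}(X),\eta_X-\mathsf{F}(f))$. Applying $\mathsf{C}$ and using $\mathbb{L}_1\mathsf{C}(\mathsf{T}(X))=0$ from the previous step, the long exact sequence collapses to $\mathbb{L}_1\mathsf{C}(X,f)\cong\Ker\bigl(\mathsf{C}(\iota)\colon\mathrm{Coker}(\mathsf{F}(f)-\eta_X)\to X\bigr)$. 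But $\mathsf{C}(\iota)$ is, up to sign, the morphism $\gamma$ of Lemma~\ref{Com}(1) — both are characterized by the property that their composition with the canonical epimorphism $\mathsf{F}(X)\twoheadrightarrow\mathrm{Coker}(\mathsf{F}(f)-\eta_X)$ equals $\pm f$ — and Lemma~\ref{Com}(2) says this morphism is a monomorphism exactly when $\mathrm{Com}(X,f)$ is exact. Hence $\mathbb{L}_1\mathsf{C}(X,f)=0$, and by the reduction of the first paragraph the lemma follows. The main obstacle is this vanishing $\mathbb{L}_1\mathsf{C}(X,f)=0$, and at its core the auxiliary statement $\mathbb{L}_1\mathsf{C}(\mathsf{T}(W))=0$, which is what allows $\mathsf{C}$ to transport projective resolutions over $\mathcal{B}\ltimes_\eta\mathsf{F}$ to resolutions over $\mathcal{B}$ in low degrees; the hypothesis on $\mathrm{Com}(X,f)$ enters only through Lemma~\ref{Com}(2).
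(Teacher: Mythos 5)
Your proof is correct, and it takes a genuinely different route from the paper's. The paper proves the lemma by a direct diagram chase: it takes a single projective presentation $0\to(K,k)\to\mathsf{T}(P)\to(X,f)\to0$, applies $\mathsf{F}$ and Lemma~\ref{Com} to build a three-column commutative diagram, invokes the Snake Lemma to extract the short exact sequence $0\to\mathrm{Coker}(k)\to P\to\mathrm{Coker}(f)\to0$, and then compares the two long-exact Hom-sequences via the $(\mathsf{C},\mathsf{Z})$ adjunction. You instead reformulate everything in terms of the left derived functors of $\mathsf{C}$: the isomorphism reduces to the single vanishing $\mathbb{L}_1\mathsf{C}(X,f)=0$, and you prove this by first establishing the auxiliary fact $\mathbb{L}_1\mathsf{C}(\mathsf{T}(W))=0$ for all $W\in\mathcal{B}$ (which needs no hypothesis on $\mathrm{Com}$) and then running the long exact sequence of $\mathbb{L}_\bullet\mathsf{C}$ along the counit short exact sequence $0\to(\mathsf{F}(X),\eta_X-\mathsf{F}(f))\to\mathsf{T}(X)\to(X,f)\to0$, identifying $\mathbb{L}_1\mathsf{C}(X,f)\cong\Ker(\gamma)$ and then invoking Lemma~\ref{Com}(2). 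I checked the two steps that carry real weight: your argument that $W_1\to\mathsf{C}(Z)\to P$ forces $W_1\to\mathsf{C}(Z)$ to be an isomorphism (an epimorphism whose composite with the next map is monic is itself monic, hence iso) is sound, and your computation that $\mathsf{C}(\iota)$ equals $-\gamma$ under the identification $\mathsf{C}(\mathsf{T}(X))\cong X$ via the projection is also correct. What your route buys is a cleaner conceptual picture: it isolates precisely which $\mathbb{L}_1\mathsf{C}$-vanishing the hypothesis on $\mathrm{Com}(X,f)$ provides, and the auxiliary statement $\mathbb{L}_1\mathsf{C}\circ\mathsf{T}=0$ is of independent interest (it is what makes $\mathsf{C}$ transport projective resolutions from $\mathcal{B}\ltimes_\eta\mathsf{F}$ to $\mathcal{B}$). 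What the paper's more hands-on route buys is brevity and a smaller technical surface, staying entirely inside the two finite diagrams it draws without invoking the full derived-functor formalism.
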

\begin{proof}
	
	By Proposition~\ref{pro1}~(3) and (4), there is an exact sequence
	\begin{equation}\label{seq3}0\rightarrow (K,k)\xrightarrow{\beta}\mathsf{T}(P)\xrightarrow{\alpha}(X,f)\rightarrow0\end{equation}
	with $P$ a projective object in $\mathcal{B}$.
	
	Since $\mathsf{F}$ is right exact, we have the following commutative diagram with exact rows
	\[\begin{tikzcd}[row sep=small, column sep=small]
		\mathsf{F}^2(K) \arrow[d, "\mathsf{F}(k)-\eta_K"] \arrow[r]       & \mathsf{F}^2(P)\oplus\mathsf{F}^3(P) \arrow[d, "\mathsf{F}(t_P)-\eta_{P\oplus\mathsf{F}(P)}"] \arrow[r] & \mathsf{F}^2(X) \arrow[d, "\mathsf{F}(f)-\eta_X"] \arrow[r] & 0 \\
		\mathsf{F}(K) \arrow[r, "\mathsf{F}(\beta)"] \arrow[d, two heads] & \mathsf{F}(P)\oplus\mathsf{F}^2(P) \arrow[r, "\mathsf{F}(\alpha)"] \arrow[d, two heads]                 & \mathsf{F}(X) \arrow[r] \arrow[d, two heads]                & 0 \\
		\mathrm{Coker}(\mathsf{F}(k)-\eta_K) \arrow[r]                    & \mathrm{Coker}(\mathsf{F}(t_P)-\eta_{P\oplus\mathsf{F}(P)}) \arrow[r]                                   & \mathrm{Coker}(\mathsf{F}(f)-\eta_X) \arrow[r]              & 0
	\end{tikzcd}\]
	
	By Lemma~\ref{Com}~(1), we get the following commutative diagram 
	\[\begin{tikzcd}[column sep=small]
		& \mathrm{Coker}(\mathsf{F}(k)-\eta_K) \arrow[d, "\gamma_K"] \arrow[r] & \mathrm{Coker}(\mathsf{F}(t_P)-\eta_{P\oplus\mathsf{F}(P)}) \arrow[d, "\gamma"] \arrow[r] & \mathrm{Coker}(\mathsf{F}(f)-\eta_X) \arrow[d, "\gamma_X"] \arrow[r] & 0 \\
		0 \arrow[r] & K \arrow[r, "\beta"]                                                 & P\oplus\mathsf{F}(P) \arrow[r, "\alpha"]                                                  & X \arrow[r]                                                          & 0
	\end{tikzcd}\]
	Since $\mathrm{Com}(X,f)$ is exact, it follows from Lemma~\ref{Com}~(2) that $\gamma_X$ is a monomorphism. By Lemma~\ref{Com}~(1) and Snake Lemma, we get a short exact sequence
	\begin{equation}\label{seq4}0\rightarrow\mathrm{Coker}(k)\rightarrow P\rightarrow\mathrm{Coker}(f)\rightarrow0.\end{equation}
	Applying $\mathrm{Hom}_\mathcal{B}(-,Y)$ to (\ref{seq4}) and applying $\mathrm{Hom}_{\mathcal{B}\ltimes_\eta\mathsf{F}}(-,\mathsf{Z}(Y))$ to (\ref{seq3}), since $(\mathsf{C,Z})$ is an adjoint pair, we get a commutative diagram with exact rows
	\[\begin{tikzcd}[column sep=small]
		{\mathrm{Hom}_\mathcal{B}(P,Y)} \arrow[d, "\cong"] \arrow[r]                              & {\mathrm{Hom}_\mathcal{B}(\mathrm{Coker}(k),Y)} \arrow[d, "\cong"] \arrow[r]     & {\mathrm{Ext}_\mathcal{B}^1(\mathrm{Coker}(f),Y)} \arrow[d, dashed]  \arrow[r] & 0 \\
		{\mathrm{Hom}_{\mathcal{B}\ltimes_\eta\mathsf{F}}(\mathsf{T}(P),\mathsf{Z}(Y))} \arrow[r] & {\mathrm{Hom}_{\mathcal{B}\ltimes_\eta\mathsf{F}}((K,k),\mathsf{Z}(Y))}\arrow[r] & {\mathrm{Ext}_{\mathcal{B}\ltimes_\eta\mathsf{F}}^1(X,\mathsf{Z}(Y))} \arrow[r]& 0
	\end{tikzcd}\]
	Therefore, we have $\mathrm{Ext}_{\mathcal{B}\ltimes_\eta\mathsf{F}}^1((X,f),\mathsf{Z}(Y))\cong\mathrm{Ext}_\mathcal{B}^1(\mathrm{Coker}(f),Y)$.
\end{proof}

\begin{rem}
	For $\zeta$-coextensions, analogous dual results of the above lemmas also hold, and we omit the details here.  
\end{rem}

\subsection{Constructions of cotorsion pairs}

\begin{thm}\label{thm1}
	Let $\mathcal{B}\ltimes_\eta\mathsf{F}$ be an $\eta$-extension of $\mathcal{B}$. Assume that $(\mathcal{X,Y})$ is a cotorsion pair in $\mathcal{B}$. If $\mathbb{L}_1\mathsf{F}(\mathcal{X})=0$, then $(\prescript{\perp}{}{\mathsf{U}^{-1}(\mathcal{Y})},\mathsf{U}^{-1}(\mathcal{Y}))$ is a cotorsion pair in $\mathcal{B}\ltimes_\eta\mathsf{F}$; and moreover, it is hereditary if and only if so is $(\mathcal{X,Y})$. 
\end{thm}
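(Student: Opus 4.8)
The plan is to verify the two defining equalities of a cotorsion pair. The first, that $\prescript{\perp}{}{\mathsf{U}^{-1}(\mathcal{Y})}$ is the left orthogonal of $\mathsf{U}^{-1}(\mathcal{Y})$, is a tautology, and the inclusion $\mathsf{U}^{-1}(\mathcal{Y}) \subseteq (\prescript{\perp}{}{\mathsf{U}^{-1}(\mathcal{Y})})^\perp$ holds for formal reasons; so the entire content is the reverse inclusion $(\prescript{\perp}{}{\mathsf{U}^{-1}(\mathcal{Y})})^\perp \subseteq \mathsf{U}^{-1}(\mathcal{Y})$, that is: every $(Z,h)$ such that $\mathrm{Ext}^1_{\mathcal{B}\ltimes_\eta\mathsf{F}}(-,(Z,h))$ vanishes on $\prescript{\perp}{}{\mathsf{U}^{-1}(\mathcal{Y})}$ must satisfy $Z \in \mathcal{Y}$.

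First I would record that $\mathsf{T}(\mathcal{X}) \subseteq \prescript{\perp}{}{\mathsf{U}^{-1}(\mathcal{Y})}$: for $X \in \mathcal{X}$ the hypothesis $\mathbb{L}_1\mathsf{F}(X) = 0$ allows Lemma~\ref{ext1}(2) to be applied with $n = 1$, giving $\mathrm{Ext}^1_{\mathcal{B}\ltimes_\eta\mathsf{F}}(\mathsf{T}(X),(Y,f)) \cong \mathrm{Ext}^1_{\mathcal{B}}(X,Y)$ for every $(Y,f) \in \mathcal{B}\ltimes_\eta\mathsf{F}$, and this group is $0$ whenever $Y \in \mathcal{Y}$, since $(\mathcal{X},\mathcal{Y})$ is a cotorsion pair in $\mathcal{B}$. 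Now, given $(Z,h) \in (\prescript{\perp}{}{\mathsf{U}^{-1}(\mathcal{Y})})^\perp$, I would test against the objects $\mathsf{T}(X)$ with $X \in \mathcal{X}$: this yields $\mathrm{Ext}^1_{\mathcal{B}\ltimes_\eta\mathsf{F}}(\mathsf{T}(X),(Z,h)) = 0$, and Lemma~\ref{ext1}(2) again rewrites this as $\mathrm{Ext}^1_{\mathcal{B}}(X,Z) = 0$ for all $X \in \mathcal{X}$; hence $Z \in \mathcal{X}^\perp = \mathcal{Y}$, so $(Z,h) \in \mathsf{U}^{-1}(\mathcal{Y})$. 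This establishes the cotorsion pair.

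For the heredity clause I would use the criterion from the displayed Proposition on hereditary cotorsion pairs, namely that $(\mathcal{C},\mathcal{F})$ is hereditary exactly when $\mathcal{F}$ is closed under cokernels of monomorphisms. If $(\mathcal{X},\mathcal{Y})$ is hereditary and $0 \to (A,a) \to (B,b) \to (C,c) \to 0$ is exact in $\mathcal{B}\ltimes_\eta\mathsf{F}$ with $(A,a),(B,b) \in \mathsf{U}^{-1}(\mathcal{Y})$, then exactness of $\mathsf{U}$ (Proposition~\ref{pro1}(1)) gives an exact sequence $0 \to A \to B \to C \to 0$ in $\mathcal{B}$ with $A,B \in \mathcal{Y}$, so $C \in \mathcal{Y}$ and $(C,c) \in \mathsf{U}^{-1}(\mathcal{Y})$. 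Conversely, if the cotorsion pair in $\mathcal{B}\ltimes_\eta\mathsf{F}$ is hereditary, then for any exact sequence $0 \to A \to B \to C \to 0$ in $\mathcal{B}$ with $A,B \in \mathcal{Y}$, applying the functor $\mathsf{Z}$ (which is exact because, by Proposition~\ref{pro1}(1), exactness in $\mathcal{B}\ltimes_\eta\mathsf{F}$ is detected on underlying objects) produces $0 \to (A,0) \to (B,0) \to (C,0) \to 0$ with the first two terms in $\mathsf{U}^{-1}(\mathcal{Y})$; closure under cokernels of monomorphisms then forces $(C,0) \in \mathsf{U}^{-1}(\mathcal{Y})$, i.e.\ $C \in \mathcal{Y}$. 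Thus $\mathcal{Y}$ is closed under cokernels of monomorphisms and $(\mathcal{X},\mathcal{Y})$ is hereditary.

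The only step that is not purely formal --- and hence the point I would flag as the main obstacle --- is the reduction in the second paragraph: the orthogonal $(\prescript{\perp}{}{\mathsf{U}^{-1}(\mathcal{Y})})^\perp$ a priori requires testing $\mathrm{Ext}^1$-vanishing against the whole, hard-to-describe class $\prescript{\perp}{}{\mathsf{U}^{-1}(\mathcal{Y})}$, and the key observation is that it suffices to test against the subclass $\mathsf{T}(\mathcal{X})$, because the adjunction $(\mathsf{T},\mathsf{U})$ together with the vanishing $\mathbb{L}_1\mathsf{F}(\mathcal{X}) = 0$ --- which is precisely where this hypothesis is consumed --- identifies $\mathrm{Ext}^1_{\mathcal{B}\ltimes_\eta\mathsf{F}}(\mathsf{T}(X),-)$ with $\mathrm{Ext}^1_{\mathcal{B}}(X,\mathsf{U}(-))$, via Lemma~\ref{ext1}(2). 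Everything else reduces to routine bookkeeping with Proposition~\ref{pro1} and the definitions.
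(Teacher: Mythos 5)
Your proposal is correct and follows essentially the same route as the paper: the paper packages the key orthogonality computation as Lemma~\ref{class} (which shows $\mathsf{T}(\mathcal{X})^\perp=\mathsf{U}^{-1}(\mathcal{X}^\perp)$ via Lemma~\ref{ext1}(2)) and then invokes the formal identity $(\prescript{\perp}{}{(\mathcal{S}^\perp)})^\perp=\mathcal{S}^\perp$, while you unpack exactly the same argument directly. Your heredity argument via closure under cokernels of monomorphisms and the exact functors $\mathsf{U}$, $\mathsf{Z}$ also matches the paper's.
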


To prove Theorem~\ref{thm1}, we need the following lemma.
\begin{lem}\label{class}
	 For a class $\mathcal{X}$ of objects of $\mathcal{B}$, if $\mathbb{L}_1\mathsf{F}(\mathcal{X})=0$, then $\mathsf{T}(\mathcal{X})^\perp=\mathsf{U}^{-1}(\mathcal{X}^\perp)$.
\end{lem}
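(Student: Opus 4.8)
The plan is to establish the two inclusions $\mathsf{T}(\mathcal{X})^\perp\subseteq\mathsf{U}^{-1}(\mathcal{X}^\perp)$ and $\mathsf{U}^{-1}(\mathcal{X}^\perp)\subseteq\mathsf{T}(\mathcal{X})^\perp$ separately, and in both directions the workhorse is the adjunction isomorphism $\mathrm{Ext}^1_{\mathcal{B}\ltimes_\eta\mathsf{F}}(\mathsf{T}(X),(Y,f))\cong\mathrm{Ext}^1_\mathcal{B}(X,Y)$ supplied by Lemma~\ref{ext1}(2), which is available precisely because $\mathbb{L}_1\mathsf{F}(\mathcal{X})=0$. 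First I would unwind the definitions: an object $(Y,f)$ lies in $\mathsf{T}(\mathcal{X})^\perp$ iff $\mathrm{Ext}^1_{\mathcal{B}\ltimes_\eta\mathsf{F}}(\mathsf{T}(X),(Y,f))=0$ for all $X\in\mathcal{X}$, while $(Y,f)\in\mathsf{U}^{-1}(\mathcal{X}^\perp)$ means exactly $Y=\mathsf{U}(Y,f)\in\mathcal{X}^\perp$, i.e. $\mathrm{Ext}^1_\mathcal{B}(X,Y)=0$ for all $X\in\mathcal{X}$.

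With this setup the lemma is almost immediate: for any $X\in\mathcal{X}$ we have $\mathbb{L}_1\mathsf{F}(X)=0$ by hypothesis, so Lemma~\ref{ext1}(2) (applied with $n=1$) gives a natural isomorphism $\mathrm{Ext}^1_{\mathcal{B}\ltimes_\eta\mathsf{F}}(\mathsf{T}(X),(Y,f))\cong\mathrm{Ext}^1_\mathcal{B}(X,Y)$ for every $(Y,f)\in\mathcal{B}\ltimes_\eta\mathsf{F}$. Hence the left-hand side vanishes for all $X\in\mathcal{X}$ if and only if the right-hand side does, which is to say $(Y,f)\in\mathsf{T}(\mathcal{X})^\perp$ if and only if $Y\in\mathcal{X}^\perp$, i.e. $(Y,f)\in\mathsf{U}^{-1}(\mathcal{X}^\perp)$. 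This proves the equality of the two classes.

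I do not anticipate a serious obstacle here; the one point that deserves a line of care is making sure the isomorphism in Lemma~\ref{ext1}(2) is genuinely natural in $(Y,f)$, or at least that it is a bijection for each fixed $(Y,f)$ depending only on $X\in\mathcal{X}$ — but this is exactly what the cited proof (via the adjunction $(\mathsf{T},\mathsf{U})$ together with the fact that $\mathsf{T}$ carries a projective resolution of $X$ to a projective resolution of $\mathsf{T}(X)$ once $\mathbb{L}_i\mathsf{T}(X)=0$) delivers. So the proof reduces to a clean invocation of Lemma~\ref{ext1}(2) applied objectwise over $X\in\mathcal{X}$, with no further computation needed.
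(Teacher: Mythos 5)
Your proposal is correct and follows essentially the same route as the paper: both reduce the equality to the isomorphism $\mathrm{Ext}^1_{\mathcal{B}\ltimes_\eta\mathsf{F}}(\mathsf{T}(X),(Y,f))\cong\mathrm{Ext}^1_\mathcal{B}(X,Y)$ from Lemma~\ref{ext1}(2), which is available since $\mathbb{L}_1\mathsf{F}(X)=0$ for $X\in\mathcal{X}$. The only difference is cosmetic — you phrase it as a single ``iff'' while the paper writes out the two inclusions separately.
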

\begin{proof}
	Let $(Y,f)\in \mathsf{T}(\mathcal{X})^\perp$. For any $X\in\mathcal{X}^\perp$, by Lemma~\ref{ext1}, we have
	\[\mathrm{Ext}_\mathcal{B}^1(X,Y)\cong\mathrm{Ext}_{\mathcal{B}\ltimes_\eta\mathsf{F}}^1(\mathsf{T}(X),(Y,f))=0.\]
	Hence $Y\in\mathcal{X}^\perp$. Thus $(Y,f)\in \mathsf{U}^{-1}(\mathcal{X}^\perp)$ and so $\mathsf{T}(\mathcal{X})^\perp\subseteq\mathsf{U}^{-1}(\mathcal{X}^\perp)$. 
	
	Let $(Y,f)\in\mathsf{U}^{-1}(\mathcal{X}^\perp)$. For any $X\in\mathcal{X}$, by Lemma~\ref{ext1}, we have
	\[\mathrm{Ext}_{\mathcal{B}\ltimes_\eta\mathsf{F}}^1(\mathsf{T}(X),(Y,f))\cong\mathrm{Ext}_\mathcal{B}^1(X,Y)=0.\]
	Hence $(Y,f)\in \mathsf{T}(\mathcal{X})^\perp$. Thus $\mathsf{U}^{-1}(\mathcal{X}^\perp)\subseteq\mathsf{T}(\mathcal{X})^\perp$. Therefore, we get $\mathsf{T}(\mathcal{X})^\perp=\mathsf{U}^{-1}(\mathcal{X}^\perp)$.
\end{proof}
	
\begin{proof}[Proof of Theorem~\ref{thm1}]
	It suffices to show $\mathsf{U}^{-1}(\mathcal{Y})=\big(\prescript{\perp}{}{\mathsf{U}^{-1}(\mathcal{Y})}\big)^\perp$. Since $(\mathcal{X,Y})$ is a cotorsion pair, it follows that $\mathcal{Y}=\mathcal{X}^\perp$. By the assumption that $\mathbb{L}_1\mathsf{F}(\mathcal{X})=0$, it follows from Lemma~\ref{class} that 
	\[\mathsf{U}^{-1}(\mathcal{Y})=\mathsf{U}^{-1}(\mathcal{X}^\perp)=\mathsf{T}(\mathcal{X})^\perp.\]
	Thus
	\[\big(\prescript{\perp}{}{\mathsf{U}^{-1}(\mathcal{Y})}\big)^\perp=\big[\prescript{\perp}{}{\big(\mathsf{T}(\mathcal{X})^\perp\big)}\big]^\perp=\mathsf{T}(\mathcal{X})^\perp=\mathsf{U}^{-1}(\mathcal{Y}).\]
	Here, one uses the fact $(\prescript{\perp}{}{(\mathcal{S}^\perp)})^\perp=\mathcal{S}^\perp$ for any class $\mathcal{S}$ of objects.
	
	If $(\mathcal{X},\mathcal{Y})$ is hereditary, then $\mathcal{Y}$ is closed under the cokernels of monomorphisms. Let $(X_1,a_1), (X_2,a_2)\in\mathsf{U}^{-1}(\mathcal{Y})$ and $f:(X_1,a_1)\rightarrow X_2$ be a monomorphism in $\mathcal{B}\ltimes_\eta\mathsf{F}$. Since $\mathsf{U}$ is exact, we get $\mathsf{U}(\mathrm{Coker}(f))\cong\mathrm{Coker}(\mathsf{U}(f))\in\mathcal{Y}$. Thus $\mathrm{Coker}(f)\in\mathsf{U}^{-1}(\mathcal{Y})$. Therefore, $\mathsf{U}^{-1}(f)$ is closed under the cokernels of monomorphisms. Hence, $(\prescript{\perp}{}{\mathsf{U}^{-1}(\mathcal{Y})},\mathsf{U}^{-1}(\mathcal{Y}))$ is hereditary.
	
	Conversely, assume that $(\prescript{\perp}{}{\mathsf{U}^{-1}(\mathcal{Y})},\mathsf{U}^{-1}(\mathcal{Y}))$ is hereditary. Let $Y_1,Y_2\in\mathcal{Y}$ and $g:Y_1\rightarrow Y_2$ be a monomorphism in $\mathcal{B}$. Then $\mathsf{Z}(Y_1),\mathsf{Z}(Y_2)\in\mathsf{U}^{-1}(\mathcal{Y})$ and $\mathsf{Z}(g)$ is a monomorphism in $\mathcal{B}\ltimes_\eta\mathsf{F}$. Thus $\mathrm{Coker}(\mathsf{Z}(g))\in\mathsf{U}^{-1}(\mathcal{Y})$. Then we have
	\[\mathrm{Coker}(g)\cong\mathrm{Coker}(\mathsf{UZ}(g))\cong\mathsf{U}(\mathrm{Coker}(\mathsf{Z}(g)))\in\mathcal{Y}.\]
	Therefore, $(\mathcal{X,Y})$ is hereditary.
\end{proof}

\begin{thm}\label{thm2}
	Let $\mathcal{B}\ltimes \mathsf{F}$ be a right trivial extension of $\mathcal{B}$ with $\mathsf{F}^2=0$. Assume that $(\mathcal{X,Y})$ is a cotorsion pair in $\mathcal{B}$. Then $(\Delta(\mathcal{X}),\Delta(\mathcal{X})^\perp)$ is a cotorsion pair in $\mathcal{B}\ltimes\mathsf{F}$. Moreover, if $\mathbb{L}_1\mathsf{F}(\mathcal{X})=0$, then $(\Delta(\mathcal{X}),\Delta(\mathcal{X})^\perp)$ is hereditary if and only if so is $(\mathcal{X,Y})$.
\end{thm}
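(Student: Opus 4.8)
The plan is to establish the cotorsion pair via the two defining orthogonality conditions: $\Delta(\mathcal{X}) = \prescript{\perp}{}{(\Delta(\mathcal{X})^\perp)}$ (the nontrivial inclusion being "$\supseteq$") and then address heredity separately. Since $\mathsf{F}^2=0$, by the remark in Subsection~\ref{3.1} an object $(X,f)$ lies in $\Delta(\mathcal{X})$ exactly when $f$ is monic with $\operatorname{Coker}(f)\in\mathcal{X}$; this concrete description will drive everything. First I would record that $\mathsf{T}(\mathcal{X})\subseteq\Delta(\mathcal{X})$ and, using $\mathsf{CT}\cong\mathsf{Id}_\mathcal{B}$ together with Proposition~\ref{pro1}~(3), that every object of $\Delta(\mathcal{X})$ fits into a short exact sequence $0\to\mathsf{Z}(K)\to\mathsf{T}(P)\to(X,f)\to 0$ where $P\surjto\operatorname{Coker}(f)$ is a projective cover-type epimorphism with kernel $K$ — more precisely, given $(X,f)\in\Delta(\mathcal{X})$, choose an epimorphism $P\to\operatorname{Coker}(f)$ in $\mathcal{B}$ with $P$ projective and lift through $\rho:X\surjto\operatorname{Coker}(f)$ to an epimorphism $(X,f)\to\mathsf{T}(P)$ in... (careful: the direction is the other way). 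I would instead use the adjunction $(\mathsf{T},\mathsf{U})$: the counit-type map gives, for a projective presentation, a presentation of $(X,f)$ by objects $\mathsf{T}(P_i)$ modulo objects $\mathsf{Z}(\text{syzygies})$, since $\mathsf{F}^2=0$ forces $\mathbb{L}_{\ge 2}\mathsf{F}=0$ and hence controls the relevant syzygies.

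The key computational step is to identify $\Delta(\mathcal{X})^\perp$. I expect that $(Y,g)\in\Delta(\mathcal{X})^\perp$ iff $\operatorname{Ext}^1_{\mathcal{B}\ltimes\mathsf{F}}(\mathsf{T}(X),(Y,g))=0$ for all $X\in\mathcal{X}$ and $\operatorname{Ext}^1_{\mathcal{B}\ltimes\mathsf{F}}((X',f'),(Y,g))=0$ for the extra generators $(X',f')\in\Delta(\mathcal{X})$ not of the form $\mathsf{T}(X)$; using $\mathsf{F}^2=0$, Lemma~\ref{Ext1} applies (since $\operatorname{Com}(X',f')$ is exact when $f'$ is monic), giving $\operatorname{Ext}^1_{\mathcal{B}\ltimes\mathsf{F}}((X',f'),\mathsf{Z}(Y))\cong\operatorname{Ext}^1_{\mathcal{B}}(\operatorname{Coker}(f'),Y)$. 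Combined with the presentation above and a dimension-shifting argument, this should show $\Delta(\mathcal{X})^\perp$ is determined by: $Y\in\mathcal{X}^\perp=\mathcal{Y}$ for the underlying object, plus a compatibility condition on the structure map $g$ coming from the non-split generators. Then, to prove $\prescript{\perp}{}{(\Delta(\mathcal{X})^\perp)}\subseteq\Delta(\mathcal{X})$, I would take $(X,f)$ in the left-hand side, use completeness of $(\mathcal{X},\mathcal{Y})$ in $\mathcal{B}$ to approximate $\operatorname{Coker}(f)$, pull this back to $\mathcal{B}\ltimes\mathsf{F}$, and run a standard Wakamatsu/Salce-type argument to force $(X,f)\in\Delta(\mathcal{X})$; the exactness of $\mathsf{U}$ and Proposition~\ref{pro1}~(1) make the diagram chases routine. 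I would also verify $\Delta(\mathcal{X})\subseteq\prescript{\perp}{}{(\Delta(\mathcal{X})^\perp)}$ directly from the definition of $\Delta(\mathcal{X})^\perp$, which is immediate.

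For heredity, assuming additionally $\mathbb{L}_1\mathsf{F}(\mathcal{X})=0$: if $(\mathcal{X},\mathcal{Y})$ is hereditary then $\mathcal{X}$ is closed under kernels of epimorphisms, and I would show $\Delta(\mathcal{X})$ inherits this. Given an epimorphism $(X_1,f_1)\to(X_2,f_2)$ in $\mathcal{B}\ltimes\mathsf{F}$ between objects of $\Delta(\mathcal{X})$, apply the Snake Lemma to the diagram whose rows are $\operatorname{Com}(X_i,f_i)$ — exactly as in the proof of Lemma~\ref{Ext1} — to get that the kernel $(X_0,f_0)$ has $f_0$ monic and sits in a short exact sequence $0\to\operatorname{Coker}(f_0)\to\operatorname{Coker}(f_1)\to\operatorname{Coker}(f_2)\to 0$ in $\mathcal{B}$ (here $\mathsf{F}$ applied to the epimorphism $X_1\to X_2$ and $\mathbb{L}_1\mathsf{F}(\mathcal{X})=0$ ensure the relevant $\mathsf{F}$-row stays exact); since $\mathcal{X}$ is closed under kernels of epimorphisms, $\operatorname{Coker}(f_0)\in\mathcal{X}$, so $(X_0,f_0)\in\Delta(\mathcal{X})$. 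Conversely, if $(\Delta(\mathcal{X}),\Delta(\mathcal{X})^\perp)$ is hereditary, restrict to images under $\mathsf{T}$ (or $\mathsf{Z}$): applying $\mathsf{C}$ (which is exact on the relevant sequences, or using $\mathsf{CT}\cong\mathsf{Id}$) transports a kernel-of-epimorphism in $\Delta(\mathcal{X})$ down to one in $\mathcal{X}$, giving heredity of $(\mathcal{X},\mathcal{Y})$. The main obstacle I anticipate is pinning down the precise description of $\Delta(\mathcal{X})^\perp$ and the structure-map compatibility condition, since $\Delta(\mathcal{X})$ has "more" generators than $\mathsf{T}(\mathcal{X})$ (unlike the situation in Theorem~\ref{thm1}); getting the $\operatorname{Ext}^1$-vanishing for all of $\Delta(\mathcal{X})$, not just its $\mathsf{T}$-part, is where the real work lies, and Lemma~\ref{Ext1} together with $\mathsf{F}^2=0$ is the tool that should make it go through.
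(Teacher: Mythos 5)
Your plan for the cotorsion-pair half contains a genuine gap. To prove the inclusion $\prescript{\perp}{}{(\Delta(\mathcal{X})^\perp)}\subseteq\Delta(\mathcal{X})$ you propose to ``use completeness of $(\mathcal{X},\mathcal{Y})$ in $\mathcal{B}$ to approximate $\mathrm{Coker}(f)$'' and then run a Salce/Wakamatsu argument. But Theorem~\ref{thm2} does not assume $(\mathcal{X},\mathcal{Y})$ is complete; it is only assumed to be a cotorsion pair. Without approximation sequences in $\mathcal{B}$, that step has nothing to stand on, and your fallback --- an explicit description of $\Delta(\mathcal{X})^\perp$ in terms of structure-map compatibility --- is left open, as you yourself flag (``the main obstacle''). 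The paper's proof sidesteps all of this by a single observation, Lemma~\ref{class2}: if a class contains all injective objects (as $\mathcal{Y}$ does), then $\Delta(\prescript{\perp}{}{\mathcal{Y}})=\prescript{\perp}{}{\mathsf{Z}(\mathcal{Y})}$. Since $\mathcal{X}=\prescript{\perp}{}{\mathcal{Y}}$, this exhibits $\Delta(\mathcal{X})$ as a left-perpendicular class, and then the formal identity $\prescript{\perp}{}{\big(\big(\prescript{\perp}{}{S}\big)^\perp\big)}=\prescript{\perp}{}{S}$ finishes the cotorsion-pair claim outright --- no completeness and no description of $\Delta(\mathcal{X})^\perp$ are needed. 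The substantive content is the $\supseteq$ inclusion in that lemma: for $(Y,f)\in\prescript{\perp}{}{\mathsf{Z}(\mathcal{Y})}$ one first shows $\mathrm{Hom}_\mathcal{B}(f,X)$ is surjective for all $X\in\mathcal{Y}$ by building a short exact sequence in $\mathcal{B}\ltimes\mathsf{F}$ that is forced to split, then deduces that $f$ is monic by mapping $\mathsf{F}(Y)$ into an injective object lying in $\mathcal{Y}$, and finally invokes Lemma~\ref{Ext1} to get $\mathrm{Coker}(f)\in\prescript{\perp}{}{\mathcal{Y}}=\mathcal{X}$. That chain of ideas is what your proposal is missing.

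Your heredity argument, by contrast, is essentially the paper's (Lemma~\ref{closed}): apply the snake lemma to the commutative ladder built from the $\mathrm{Com}$ sequences, with $\mathbb{L}_1\mathsf{F}(\mathcal{X})=0$ keeping the top row exact, to transfer closure under kernels of epimorphisms from $\mathcal{X}$ to $\Delta(\mathcal{X})$; and for the converse, apply $\mathsf{T}$ (using $\mathbb{L}_1\mathsf{T}(\mathcal{X})=0$ from Lemma~\ref{ext1}), note $\mathsf{T}(\mathcal{X})\subseteq\Delta(\mathcal{X})$, and recover the statement in $\mathcal{B}$ via $\mathsf{CT}\cong\mathsf{Id}_\mathcal{B}$. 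That half of your outline is on track.
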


To prove Theorem~\ref{thm2}, we need some preparations.

\begin{lem}\label{class2}
	Let $\mathcal{B}\ltimes \mathsf{F}$ be a right trivial extension of $\mathcal{B}$ with $\mathsf{F}^2=0$. For a class $\mathcal{X}$ of objects of $\mathcal{B}$, if $\mathcal{X}$ contains all injective objects of $\mathcal{B}$, then $\Delta(\prescript{\perp}{}{\mathcal{X}})=\prescript{\perp}{}{\mathsf{Z}(\mathcal{X})}$.
\end{lem}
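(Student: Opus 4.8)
The plan is to establish the two inclusions defining $\Delta(\prescript{\perp}{}{\mathcal{X}})=\prescript{\perp}{}{\mathsf{Z}(\mathcal{X})}$, after first recording that, since $\eta=0$ and $\mathsf{F}^{2}=0$, the sequence $\mathrm{Com}(X,f)$ is simply $0\to\mathsf{F}(X)\xrightarrow{f}X$, so it is exact precisely when $f$ is a monomorphism; hence $\Delta(\prescript{\perp}{}{\mathcal{X}})=\{(X,f)\in\mathcal{B}\ltimes\mathsf{F}\mid f\text{ is monic and }\mathrm{Coker}(f)\in\prescript{\perp}{}{\mathcal{X}}\}$. For the inclusion ``$\subseteq$'' I would argue directly: if $(X,f)\in\Delta(\prescript{\perp}{}{\mathcal{X}})$ then $\mathrm{Com}(X,f)$ is exact, so Lemma~\ref{Ext1} gives $\Ext_{\mathcal{B}\ltimes\mathsf{F}}^{1}((X,f),\mathsf{Z}(Y))\cong\Ext_{\mathcal{B}}^{1}(\mathrm{Coker}(f),Y)$ for all $Y\in\mathcal{B}$, and for $Y\in\mathcal{X}$ the right-hand side vanishes because $\mathrm{Coker}(f)\in\prescript{\perp}{}{\mathcal{X}}$; thus $(X,f)\in\prescript{\perp}{}{\mathsf{Z}(\mathcal{X})}$.

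For ``$\supseteq$'', the key point — and the only place where the hypothesis $\prescript{}{\mathcal{B}}{\mathcal{I}}\subseteq\mathcal{X}$ is used — is to show that $(X,f)\in\prescript{\perp}{}{\mathsf{Z}(\mathcal{X})}$ forces $f$ to be a monomorphism. First I would build the short exact sequence
\[0\to\mathsf{Z}(\mathsf{F}(X))\xrightarrow{u}\mathsf{T}(X)\xrightarrow{p}(X,f)\to0\]
in $\mathcal{B}\ltimes\mathsf{F}$, where $p$ has underlying morphism $(1_{X},f)\colon X\oplus\mathsf{F}(X)\to X$ and $u$ has underlying morphism $\left(\begin{smallmatrix}-f\\1\end{smallmatrix}\right)\colon\mathsf{F}(X)\to X\oplus\mathsf{F}(X)$; since $\mathsf{F}^{2}=0$ one has $\mathsf{T}(X)=(X\oplus\mathsf{F}(X),t_{X})$ with $t_{X}=\left(\begin{smallmatrix}0\\1\end{smallmatrix}\right)\colon\mathsf{F}(X)\to X\oplus\mathsf{F}(X)$, and then Proposition~\ref{pro1}~(1) (exactness is detected on underlying objects) makes it routine to check that $p$ and $u$ are morphisms, that $p$ is epic, and that the kernel of $p$ is $\mathsf{Z}(\mathsf{F}(X))$ (its structure map is forced to be zero since $\mathsf{F}^{2}=0$). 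Applying $\Hom_{\mathcal{B}\ltimes\mathsf{F}}(-,\mathsf{Z}(E))$ for an injective object $E$ of $\mathcal{B}$ and identifying the terms by means of the adjunctions $(\mathsf{T},\mathsf{U})$ and $(\mathsf{C},\mathsf{Z})$ of Proposition~\ref{pro1}~(2) together with the full faithfulness of $\mathsf{Z}$, this turns into the exact sequence
\[0\to\Hom_{\mathcal{B}}(\mathrm{Coker}(f),E)\to\Hom_{\mathcal{B}}(X,E)\xrightarrow{f^{*}}\Hom_{\mathcal{B}}(\mathsf{F}(X),E)\to\Ext_{\mathcal{B}\ltimes\mathsf{F}}^{1}((X,f),\mathsf{Z}(E))\to\cdots,\]
in which $f^{*}$ is, up to sign, precomposition with $f$. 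Since $E\in\prescript{}{\mathcal{B}}{\mathcal{I}}\subseteq\mathcal{X}$ and $(X,f)\in\prescript{\perp}{}{\mathsf{Z}(\mathcal{X})}$, the $\Ext^{1}$-term vanishes, so $f^{*}\colon\Hom_{\mathcal{B}}(X,E)\to\Hom_{\mathcal{B}}(\mathsf{F}(X),E)$ is surjective for every injective $E$ of $\mathcal{B}$.

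From this I would conclude that $f$ is monic as follows. Put $K=\Ker(f)$; every morphism of the form $h\circ f$ with $h\colon X\to E$ restricts to zero on $K$, so surjectivity of $f^{*}$ forces the restriction map $\Hom_{\mathcal{B}}(\mathsf{F}(X),E)\to\Hom_{\mathcal{B}}(K,E)$ to be zero; but that restriction is surjective since $E$ is injective and $K\hookrightarrow\mathsf{F}(X)$, so $\Hom_{\mathcal{B}}(K,E)=0$ for every injective $E$, and choosing a monomorphism from $K$ into an injective object (enough injectives in $\mathcal{B}$) gives $K=0$. Finally, $f$ being monic, $\mathrm{Com}(X,f)$ is exact, so Lemma~\ref{Ext1} yields $\Ext_{\mathcal{B}}^{1}(\mathrm{Coker}(f),Y)\cong\Ext_{\mathcal{B}\ltimes\mathsf{F}}^{1}((X,f),\mathsf{Z}(Y))=0$ for all $Y\in\mathcal{X}$, that is $\mathrm{Coker}(f)\in\prescript{\perp}{}{\mathcal{X}}$, whence $(X,f)\in\Delta(\prescript{\perp}{}{\mathcal{X}})$. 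The hard part is precisely this monomorphism step: producing the short exact sequence $0\to\mathsf{Z}(\mathsf{F}(X))\to\mathsf{T}(X)\to(X,f)\to0$ and extracting, from orthogonality to the $\mathsf{Z}$-images of the injectives of $\mathcal{B}$, the surjectivity of $f^{*}$ that annihilates $\Ker(f)$; once $f$ is known to be monic, everything else is a formal consequence of Lemma~\ref{Ext1} and the adjunctions in Proposition~\ref{pro1}.
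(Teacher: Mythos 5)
Your proof is correct, and the first inclusion ($\Delta(\prescript{\perp}{}{\mathcal{X}})\subseteq\prescript{\perp}{}{\mathsf{Z}(\mathcal{X})}$) is the same as the paper's: apply Lemma~\ref{Ext1}. For the reverse inclusion you take a genuinely different route at the key surjectivity step. The paper, for each $X\in\mathcal{X}$ and each $g\colon\mathsf{F}(Y)\to X$, exhibits the explicit extension $0\to\mathsf{Z}(X)\to\big(X\oplus Y,\left(\begin{smallmatrix}0&g\\0&f\end{smallmatrix}\right)\big)\to(Y,f)\to0$ in $\mathcal{B}\ltimes\mathsf{F}$ and observes that the vanishing of $\Ext^1_{\mathcal{B}\ltimes\mathsf{F}}((Y,f),\mathsf{Z}(X))$ forces it to split; the splitting directly hands over an $\alpha$ with $g=\alpha\circ f$, so $\Hom_\mathcal{B}(f,X)$ is surjective for every $X\in\mathcal{X}$. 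You instead form the canonical presentation $0\to\mathsf{Z}(\mathsf{F}(X))\to\mathsf{T}(X)\to(X,f)\to0$ (the same short exact sequence that reappears in Lemma~\ref{proj}), apply $\Hom_{\mathcal{B}\ltimes\mathsf{F}}(-,\mathsf{Z}(E))$, and read surjectivity of $f^*$ off the resulting long exact sequence via the adjunctions $(\mathsf{T},\mathsf{U})$ and $(\mathsf{C},\mathsf{Z})$ — this in fact identifies $\Ext^1_{\mathcal{B}\ltimes\mathsf{F}}((X,f),\mathsf{Z}(E))$ with the cokernel of $f^*$, a slightly more structural statement, and ties this lemma to the presentation used later in the paper. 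Both routes are valid; the paper's avoids the long exact sequence, needing only that a vanishing $\Ext^1$ splits one sequence, while yours is more systematic. One small simplification available to you at the end: once $f^*$ is surjective for an injective $E\in\mathcal{X}$, choose a monomorphism $i\colon\mathsf{F}(X)\hookrightarrow E$ and lift it to $h$ with $h\circ f=i$; then $i$ monic forces $f$ monic, which is the paper's Step~2 and bypasses your (correct, but longer) restriction-map detour through $\Hom_\mathcal{B}(\Ker f,E)=0$.
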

\begin{proof}
	Let $(Y,f)\in\Delta(\prescript{\perp}{}{\mathcal{X}})$. By definition, $f$ is a monomorphism and $\mathrm{Coker}(f)\in\prescript{\perp}{}{\mathcal{X}}$. Since $f$ is a monomorphism and $\mathrm{Ext}_\mathcal{B}^1(\mathsf{C}(Y,f),\mathcal{X})=\mathrm{Ext}_\mathcal{B}^1(\mathrm{Coker}(f),\mathcal{X})=0$, it follows that $\mathrm{Ext}^1_{\mathcal{B}\ltimes\mathsf{F}}((Y,f),\mathsf{Z}(\mathcal{X}))=0$, that is, $(Y,f)\in \prescript{\perp}{}{\mathsf{Z}(\mathcal{X})}.$ Therefore, we have  $\Delta(\prescript{\perp}{}{\mathcal{X}})\subseteq\prescript{\perp}{}{\mathsf{Z}(\mathcal{X})}$.
	
	On the other hand, let $(Y,f)\in\prescript{\perp}{}{\mathsf{Z}(\mathcal{X})}$. Then $\mathrm{Ext}^1_{\mathcal{B}\ltimes\mathsf{F}}((Y,f),\mathsf{Z}(\mathcal{X}))=0.$
	
	\textbf{Step 1.} We show that for any $X\in\mathcal{X}$, $\mathrm{Hom}_\mathcal{B}(f,X):\mathrm{Hom}_\mathcal{B}(Y,X)\rightarrow\mathrm{Hom}_\mathcal{B}(F(Y),X)$ is an epimorphism. For any $g\in\mathrm{Hom}_\mathcal{B}(\mathsf{F}(Y),X)$, we have the following  commutative diagram
	\[\begin{tikzcd}
		& \mathsf{F}(X) \arrow[d, "0"] \arrow[r, "{\left(\begin{smallmatrix}1\\0\end{smallmatrix}\right)}"] & \mathsf{F}(X)\oplus\mathsf{F}(Y) \arrow[d, "{\left(\begin{smallmatrix}0\;\;g\\0\;\;f\end{smallmatrix}\right)}"] \arrow[r, "{(0,1)}"] & \mathsf{F}(Y) \arrow[d, "f"] &   \\
		0 \arrow[r] & X \arrow[r, "{\left(\begin{smallmatrix}1\\0\end{smallmatrix}\right)}"]                           & X\oplus Y \arrow[r, "{(0,1)}"]                                                                          & Y \arrow[r]                  & 0
	\end{tikzcd}\]
	By the assumption $\mathsf{F}^2=0$, $(V,v)\in\mathcal{B}\ltimes \mathsf{F}$ for any $V\in\mathcal{B}$ and $v\in\mathrm{Hom}_\mathcal{B}(\mathsf{F}(V),V)$. Therefore, we have the following short exact sequence in $\mathcal{B}\ltimes\mathsf{F}$:
	\begin{equation}\label{seq2}0\rightarrow\mathsf{Z}(X)\xrightarrow{\left(\begin{smallmatrix}
			1\\ 0
		\end{smallmatrix}\right)}\big(X\oplus Y,\left(\begin{smallmatrix}
		0&g\\ 0&f
	\end{smallmatrix}\right)\big)\xrightarrow{(0,1)}(Y,f)\rightarrow0.\end{equation}
Since $\mathrm{Ext}^1_{\mathcal{B}\ltimes\mathsf{F}}((Y,f),\mathsf{Z}(X))=0,$ the sequence (\ref{seq2}) splits. Therefore, there is a morphism $\big(\begin{smallmatrix}
	\alpha\\ \beta
\end{smallmatrix}\big):(Y,f)\rightarrow\big(X\oplus Y,\left(\begin{smallmatrix}
0&g\\ 0&f
\end{smallmatrix}\right)\big)$ in $\mathcal{B}\ltimes\mathsf{F}$ such that $(0,1)\big(\begin{smallmatrix}
\alpha\\ \beta
\end{smallmatrix}\big)=\mathsf{Id}_{(Y,f)}$. Thus $\beta=\mathsf{Id}_Y$. There is a commutative diagram
\[\begin{tikzcd}
	\mathsf{F}(Y) \arrow[d, "f"] \arrow[r, "{\big(\begin{smallmatrix}\mathsf{F}(\alpha)\\1\end{smallmatrix}\big)}"] & \mathsf{F}(X)\oplus\mathsf{F}(Y) \arrow[d, "\left(\begin{smallmatrix}0\;\;g\\0\;\;f\end{smallmatrix}\right)"] \\
	Y \arrow[r, "{\big(\begin{smallmatrix}\alpha\\1\end{smallmatrix}\big)}"]                                & X\oplus Y                                                                                                    
	\end{tikzcd}\]
	Therefore, we get $g=\alpha\circ f$. Then $\mathrm{Hom}_\mathcal{B}(f,X)$ is an epimorphism.
	
	\textbf{Step 2.} We show that $f$ is a monomorphism. ​Since the abelian category $\mathcal{B}$ has enough injective objects, there exists a monomorphism $i:\mathsf{F}(Y)\rightarrow I$ with $I$ injective.​ By the assumption $I\in\mathcal{X}$,  $\mathrm{Hom}_\mathcal{B}(f,I):\mathrm{Hom}_\mathcal{B}(Y,I)\rightarrow\mathrm{Hom}_\mathcal{B}(F(Y),I)$ is an epimorphism. Thus there is $v\in\mathrm{Hom}_\mathcal{B}(Y,I)$ such that $v\circ f=i$. Therefore, $f$ is a monomorphism.
	
	\textbf{Step 3.} We show that $\mathrm{Coker}(f)\in\prescript{\perp}{}{\mathcal{X}}$. Since $f$ is a monomorphism, by Lemma~\ref{Ext1}, we have
	\[\mathrm{Ext}^1_\mathcal{B}(\mathrm{Coker}(f),\mathcal{X})=\mathrm{Ext}^1_\mathcal{B}(\mathsf{C}(Y,f),\mathcal{X})\cong\mathrm{Ext}^1_{\mathcal{B}\ltimes\mathsf{F}}((Y,f),\mathsf{Z}(\mathcal{X}))=0.\]
	Therefore, we get $\mathrm{Coker}(f)\in\prescript{\perp}{}{\mathcal{X}}$. So $(Y,f)\in\Delta(\prescript{\perp}{}{\mathcal{X}})$. Hence $\prescript{\perp}{}{\mathsf{Z}(\mathcal{X})}\subseteq\Delta(\prescript{\perp}{}{\mathcal{X}}).$ Then we are done.
\end{proof}

\begin{lem}\label{closed}
	Let $\mathcal{B}\ltimes \mathsf{F}$ be right trivial extension of $\mathcal{B}$, $\mathcal{X}$ a class of objects of $\mathcal{B}$. If  $\mathbb{L}_1\mathsf{F}(\mathcal{X})=0$ , then $\Delta(\mathcal{X})$ is closed under the kernels of epimorphisms if and only if $\mathcal{X}$ is closed under the kernels of epimorphisms.
\end{lem}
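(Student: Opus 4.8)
The plan is to unwind ``closed under kernels of epimorphisms'' into a statement about short exact sequences and to prove the two implications separately.

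\emph{From $\mathcal{X}$ to $\Delta(\mathcal{X})$.} I would begin with a short exact sequence $0\to(K,k)\xrightarrow{\beta}(X_1,a_1)\xrightarrow{\alpha}(X_2,a_2)\to0$ in $\mathcal{B}\ltimes\mathsf{F}$ with $(X_1,a_1),(X_2,a_2)\in\Delta(\mathcal{X})$, and aim to show $(K,k)\in\Delta(\mathcal{X})$, that is, that $\mathrm{Com}(K,k)$ is exact and $\mathrm{Coker}(k)\in\mathcal{X}$. By Proposition~\ref{pro1}~(1) the underlying sequence $0\to K\to X_1\to X_2\to0$ is exact in $\mathcal{B}$, and since $\mathsf{C}$ is a left adjoint by Proposition~\ref{pro1}~(2), hence right exact, the sequence $\mathrm{Coker}(k)\xrightarrow{\mathsf{C}(\beta)}\mathrm{Coker}(a_1)\xrightarrow{\mathsf{C}(\alpha)}\mathrm{Coker}(a_2)\to0$ is exact. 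Let $\gamma_K,\gamma_1,\gamma_2$ be the canonical morphisms attached to $(K,k),(X_1,a_1),(X_2,a_2)$ by Lemma~\ref{Com}~(1); right exactness of $\mathsf{F}$ identifies $\mathrm{Coker}(\mathsf{F}(a_i))$ with $\mathsf{F}(\mathrm{Coker}(a_i))$, so $\gamma_i\colon\mathsf{F}(\mathrm{Coker}(a_i))\to X_i$ and $\gamma_K\colon\mathsf{F}(\mathrm{Coker}(k))\to K$, and by Lemma~\ref{Com}~(2) the maps $\gamma_1,\gamma_2$ are monomorphisms because $(X_1,a_1),(X_2,a_2)\in\Delta(\mathcal{X})$. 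By naturality of the construction in Lemma~\ref{Com}~(1) these fit into a commutative ladder with bottom row $0\to K\to X_1\to X_2\to0$, with top row $\mathsf{F}$ applied to $\mathrm{Coker}(k)\to\mathrm{Coker}(a_1)\to\mathrm{Coker}(a_2)\to0$ (exact, as $\mathsf{F}$ is right exact), and with downward vertical maps $\gamma_K,\gamma_1,\gamma_2$; writing $p=\mathsf{F}(\mathsf{C}(\beta))$ for the leftmost map of the top row, the left square reads $\beta\circ\gamma_K=\gamma_1\circ p$. Since $\mathrm{Coker}(\gamma_K)\cong\mathrm{Coker}(k)$, $\mathrm{Coker}(\gamma_1)\cong\mathrm{Coker}(a_1)$ and $\mathrm{Coker}(\gamma_2)\cong\mathrm{Coker}(a_2)$ by Lemma~\ref{Com}~(1), the Snake Lemma applied to this ladder (using $\ker\gamma_1=\ker\gamma_2=0$) produces the exact sequence $0\to\mathrm{Coker}(k)\to\mathrm{Coker}(a_1)\to\mathrm{Coker}(a_2)\to0$. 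As $\mathrm{Coker}(a_1),\mathrm{Coker}(a_2)\in\mathcal{X}$ and $\mathcal{X}$ is closed under kernels of epimorphisms, $\mathrm{Coker}(k)\in\mathcal{X}$. Applying $\mathsf{F}$ to this short exact sequence and using $\mathbb{L}_1\mathsf{F}(\mathrm{Coker}(a_2))=0$ (which holds by hypothesis, as $\mathrm{Coker}(a_2)\in\mathcal{X}$) shows that $p$ is a monomorphism; then, $p$ being monic, the relation $\beta\circ\gamma_K=\gamma_1\circ p$ with $\gamma_1$ monic forces $\ker\gamma_K=0$, so $\mathrm{Com}(K,k)$ is exact by Lemma~\ref{Com}~(2). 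Hence $(K,k)\in\Delta(\mathcal{X})$.

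\emph{From $\Delta(\mathcal{X})$ to $\mathcal{X}$.} Here I would use the functor $\mathsf{T}$. Given a short exact sequence $0\to C_0\to C_1\to C_2\to0$ in $\mathcal{B}$ with $C_1,C_2\in\mathcal{X}$, we have $\mathbb{L}_1\mathsf{F}(C_2)=0$, hence $\mathbb{L}_1\mathsf{T}(C_2)=0$ by Lemma~\ref{ext1}~(1) (as $\mathsf{U}$ is faithful and exact, it reflects zero objects). Since $\mathsf{T}$ is right exact, applying it yields a short exact sequence $0\to\mathsf{T}(C_0)\to\mathsf{T}(C_1)\to\mathsf{T}(C_2)\to0$ in $\mathcal{B}\ltimes\mathsf{F}$. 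As $\mathsf{T}(C_1),\mathsf{T}(C_2)\in\mathsf{T}(\mathcal{X})\subseteq\Delta(\mathcal{X})$, the hypothesis gives $\mathsf{T}(C_0)\in\Delta(\mathcal{X})$, so $C_0\cong\mathsf{C}\mathsf{T}(C_0)\in\mathcal{X}$ by $\mathsf{C}\mathsf{T}\cong\mathrm{Id}_{\mathcal{B}}$ (Proposition~\ref{pro1}~(2)) and the definition of $\Delta(\mathcal{X})$. Thus $\mathcal{X}$ is closed under kernels of epimorphisms.

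\emph{Main obstacle.} The delicate point is the final step of the first implication: the Snake Lemma only yields exactness of the cokernel sequence and controls neither $\ker\gamma_K$ nor whether $\mathrm{Com}(K,k)$ is exact. It is exactly here that $\mathbb{L}_1\mathsf{F}(\mathcal{X})=0$ is indispensable, entering through the left exactness of $\mathsf{F}$ on $0\to\mathrm{Coker}(k)\to\mathrm{Coker}(a_1)\to\mathrm{Coker}(a_2)\to0$ together with the compatibility $\beta\circ\gamma_K=\gamma_1\circ p$. The remaining verifications---commutativity and exactness of the ladder, the naturality of the construction in Lemma~\ref{Com}, and that $\mathsf{U}$ reflects zero objects---are routine, and the dual statement for $\zeta$-coextensions is obtained by replacing ``kernels of epimorphisms'' with ``cokernels of monomorphisms''.
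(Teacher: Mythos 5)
Your proof is correct and follows essentially the same route as the paper's: for one direction you apply $\mathsf{T}$ to a short exact sequence in $\mathcal{B}$ (using $\mathbb{L}_1\mathsf{T}=0$ on $\mathcal{X}$ via Lemma~\ref{ext1} and $\mathsf{CT}\cong\mathrm{Id}$), and for the other you build the same ladder from Lemma~\ref{Com}, run the Snake Lemma to get the cokernel short exact sequence, and then use $\mathbb{L}_1\mathsf{F}(\mathcal{X})=0$ to see that $\mathsf{F}$ preserves its left exactness, forcing $\gamma_K$ to be monic. This is precisely the paper's argument up to relabelling.
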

\begin{proof}
	Assume that $\Delta(\mathcal{X})$ is closed under the kernels of epimorphisms. Let \begin{equation}\label{seq}0\rightarrow X_1\rightarrow X_2\rightarrow X_3\rightarrow0\end{equation} be an exact sequence with $X_2,X_3\in\mathcal{X}$. Since $\mathbb{L}_1\mathsf{F}(\mathcal{X})=0$, it follows from Lemma~\ref{ext1}~(1) that $\mathbb{L}_1\mathsf{T}(\mathcal{X})=0.$ Then applying the functor $\mathsf{T}$ to (\ref{seq}) gives an exact sequence
	\[0\rightarrow\mathsf{T}(X_1)\rightarrow\mathsf{T}(X_2)\rightarrow\mathsf{T}(X_3)\rightarrow0.\]
	Since  $\mathsf{T}(\mathcal{X})\subseteq\Delta(\mathcal{X})$, one gets $\mathsf{T}(X_2),\mathsf{T}(X_3)\in\Delta(\mathcal{X})$. Thus $\mathsf{T}(X_1)\in\Delta(\mathcal{X})$ and so $X_1=\mathsf{C}(\mathsf{T}(X_1))\in\mathcal{X}$. This prove that $\mathcal{X}$ is closed under the kernels of epimorphisms.
	
	Conversely, assume that $\mathcal{X}$ is closed under the kernels of epimorphisms. Let 
	$$0\rightarrow(X_1,f_1)\xrightarrow{\alpha}(X_2,f_2)\xrightarrow{\beta}(X_3,f_3)\rightarrow0$$ be an exact sequence with $(X_2,f_2),(X_3,f_3)\in\Delta(\mathcal{X})$. Thus $\mathrm{Coker}(f_2),\mathrm{Coker}(f_3)\in\mathcal{X}$ and $\mathrm{Com}(X_2,f_2),\mathrm{Com}(X_3,f_3)$ are exact. Since $\mathsf{F}$ is right exact, it follows from Lemma~\ref{Com} that there is a commutative diagram
	\[\begin{tikzcd}[column sep=small]
		& \mathsf{F}(\mathrm{Coker}(f_1)) \arrow[r,"\widetilde{\alpha}"] \arrow[d, "\gamma_1"] & \mathsf{F}(\mathrm{Coker}(f_2)) \arrow[r] \arrow[d, "\gamma_2"] & \mathsf{F}(\mathrm{Coker}(f_3)) \arrow[r] \arrow[d, "\gamma_3"] & 0 \\
		0 \arrow[r] & X_1 \arrow[r,"\alpha"]                                                   & X_2 \arrow[r,"\beta"]                                                   & X_3 \arrow[r]                                                   & 0 
	\end{tikzcd}\]
	with $\gamma_2,\gamma_3$ monomorphisms. By Snake Lemma and Lemma~\ref{Com}, we get an exact sequence
	\[0\rightarrow\mathrm{Coker}(f_1)\rightarrow\mathrm{Coker}(f_2)\rightarrow\mathrm{Coker}(f_3)\rightarrow0.\]
	Since $\mathcal{X}$ is closed under the  kernels of epimorphisms, $\mathrm{Coker}(f_1)\in\mathcal{X}$. Since $\mathrm{Coker}(f_3)\in\mathcal{X}$ and $\mathbb{L}_1\mathsf{F}(\mathcal{X})=0$,  $\widetilde{\alpha}$ is a monomorphism. Then $\gamma_1$ is a monomorphism. By Lemma~\ref{Com}~(2), one gets that $\mathrm{Com}(X_1,f_1)$ is exact. Therefore, $(X_1,f_1)\in\Delta(\mathcal{X})$. This prove that $\Delta(\mathcal{X})$ is closed under the kernels of epimorphisms. 
\end{proof}

\begin{proof}[Proof of Theorem~\ref{thm2}]
	It suffices to show that $\Delta(\mathcal{X})=\prescript{\perp}{}{(\Delta(\mathcal{X})^\perp)}$. Since $(\mathcal{X,Y})$ is a cotorsion pair in $\mathcal{B}$, $\mathcal{Y}$ contains all injective objects of $\mathcal{B}$ and $\mathcal{X}=\prescript{\perp}{}{\mathcal{Y}}$. By Lemma~\ref{class2}, we have
	\[\Delta(\mathcal{X})=\Delta(\prescript{\perp}{}{\mathcal{Y}})=\prescript{\perp}{}{\mathsf{Z}(\mathcal{X})}.\]
	Thus
	\[\prescript{\perp}{}{(\Delta(\mathcal{X})^\perp)}=\prescript{\perp}{}{\big[\big(\prescript{\perp}{}{\mathsf{Z}(\mathcal{X})}\big)^\perp\big]}=\prescript{\perp}{}{\mathsf{Z}(\mathcal{X})}=\Delta(\mathcal{X}).\]
	
	If $\mathbb{L}_1\mathsf{F}(\mathcal{X})=0$, then by Lemma~\ref{closed}, $\Delta(\mathcal{X})$ is closed under the kernels of epimorphisms if and only if $\mathcal{X}$ is closed under the kernels of epimorphisms. That is, $(\Delta(\mathcal{X}),\Delta(\mathcal{X})^\perp)$ is hereditary if and only if so is $(\mathcal{X,Y})$.
\end{proof}

By comparing cotorsion pairs given in Theorem~\ref{thm1} and Theorem~\ref{thm2}, we have the following relations.

\begin{pro}\label{pro3.10}
	Let $\mathcal{B}\ltimes \mathsf{F}$ be right trivial extension of $\mathcal{B}$ with $\mathsf{F}^2=0$. Assume that $(\mathcal{X,Y})$ is a cotorsion pair in $\mathcal{B}$. If $\mathbb{L}_1\mathsf{F}(\mathcal{X})=0$, then the cotorsion pairs $(\prescript{\perp}{}{\mathsf{U}^{-1}(\mathcal{Y})},\mathsf{U}^{-1}(\mathcal{Y}))$ and $(\Delta(\mathcal{X}),\Delta(\mathcal{X})^\perp)$ in $\mathcal{B}\ltimes\mathsf{F}$ have a relation $\Delta(\mathcal{X})^\perp\subseteq\mathsf{U}^{-1}(\mathcal{Y})$, or equivalently, $\prescript{\perp}{}{\mathsf{U}^{-1}(\mathcal{Y})}\subseteq\Delta(\mathcal{X})$.
\end{pro}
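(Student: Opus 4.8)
The plan is to reduce the claimed inclusion to the already-noted containment $\mathsf{T}(\mathcal{X})\subseteq\Delta(\mathcal{X})$ together with the identification of $\mathsf{U}^{-1}(\mathcal{Y})$ supplied by Lemma~\ref{class}. First I would record the purely formal fact that the two asserted inclusions are equivalent: if $(\mathcal{C}_1,\mathcal{F}_1)$ and $(\mathcal{C}_2,\mathcal{F}_2)$ are any two cotorsion pairs in an abelian category, then $\mathcal{F}_2\subseteq\mathcal{F}_1$ holds if and only if $\mathcal{C}_1\subseteq\mathcal{C}_2$. Indeed, applying $(-)^\perp$ to $\mathcal{C}_1\subseteq\mathcal{C}_2$ reverses the inclusion, giving $\mathcal{F}_2=\mathcal{C}_2^\perp\subseteq\mathcal{C}_1^\perp=\mathcal{F}_1$; conversely, applying $\prescript{\perp}{}{(-)}$ to $\mathcal{F}_2\subseteq\mathcal{F}_1$ gives $\mathcal{C}_1=\prescript{\perp}{}{\mathcal{F}_1}\subseteq\prescript{\perp}{}{\mathcal{F}_2}=\mathcal{C}_2$, where we use that $\mathcal{C}_i=\prescript{\perp}{}{\mathcal{F}_i}$ for a cotorsion pair. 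Hence it suffices to prove $\Delta(\mathcal{X})^\perp\subseteq\mathsf{U}^{-1}(\mathcal{Y})$.

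For this, I would invoke Lemma~\ref{class}. Since $(\mathcal{X},\mathcal{Y})$ is a cotorsion pair we have $\mathcal{Y}=\mathcal{X}^\perp$, and since $\mathbb{L}_1\mathsf{F}(\mathcal{X})=0$, Lemma~\ref{class} gives $\mathsf{U}^{-1}(\mathcal{Y})=\mathsf{U}^{-1}(\mathcal{X}^\perp)=\mathsf{T}(\mathcal{X})^\perp$. On the other hand, as observed in Subsection~\ref{3.1} (here $\mathsf{F}^2=0$, so $\Delta(\mathcal{X})$ has the simpler description as those $(X,f)$ with $f$ monic and $\mathrm{Coker}(f)\in\mathcal{X}$), we have $\mathsf{T}(\mathcal{X})\subseteq\Delta(\mathcal{X})$. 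Applying $(-)^\perp$ to this containment again reverses it, so $\Delta(\mathcal{X})^\perp\subseteq\mathsf{T}(\mathcal{X})^\perp=\mathsf{U}^{-1}(\mathcal{Y})$, which is exactly the desired inclusion; the equivalent form $\prescript{\perp}{}{\mathsf{U}^{-1}(\mathcal{Y})}\subseteq\Delta(\mathcal{X})$ then follows from the first paragraph.

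There is essentially no serious obstacle: the whole argument is a combination of $\mathsf{T}(\mathcal{X})\subseteq\Delta(\mathcal{X})$, Lemma~\ref{class}, and the order-reversing monotonicity of the orthogonality operators. The only thing worth double-checking is that the hypotheses align correctly: $\mathsf{F}^2=0$ is what makes $\Delta(\mathcal{X})$ meaningful in this setting and, via Theorem~\ref{thm2}, makes $(\Delta(\mathcal{X}),\Delta(\mathcal{X})^\perp)$ a genuine cotorsion pair, while $\mathbb{L}_1\mathsf{F}(\mathcal{X})=0$ is precisely the hypothesis of Lemma~\ref{class} and of Theorem~\ref{thm1}, so that $(\prescript{\perp}{}{\mathsf{U}^{-1}(\mathcal{Y})},\mathsf{U}^{-1}(\mathcal{Y}))$ is a cotorsion pair to begin with. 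For context one should also emphasise that equality cannot be expected in general: the two cotorsion pairs of Theorems~\ref{thm1} and~\ref{thm2} are genuinely distinct (cf.\ \cite[Example~4.3]{CRZ}), so the proposition records only a one-sided comparison.
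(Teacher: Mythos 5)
Your proof is correct, and it is in fact a more direct route than the one taken in the paper. Both arguments ultimately turn on the same two facts — the identification $\mathsf{U}^{-1}(\mathcal{Y})=\mathsf{T}(\mathcal{X})^\perp$ from Lemma~\ref{class} and the containment $\mathsf{T}(\mathcal{X})\subseteq\Delta(\mathcal{X})$ — and both conclude by applying $(-)^\perp$. The difference is how $\mathsf{T}(\mathcal{X})\subseteq\Delta(\mathcal{X})$ enters. You simply invoke the observation, made in Subsection~\ref{3.1}, that $\mathsf{T}(\mathcal{X})\subseteq\Delta(\mathcal{X})$ always holds (and indeed, when $\eta=0$, $t_X=\left(\begin{smallmatrix}0&0\\1&0\end{smallmatrix}\right)$ is visibly a monomorphism with cokernel $X$, so this is immediate). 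The paper instead rewrites $\Delta(\mathcal{X})=\prescript{\perp}{}{\mathsf{Z}(\mathcal{Y})}$ via Lemma~\ref{class2} and then re-derives $\mathsf{T}(\mathcal{X})\subseteq\prescript{\perp}{}{\mathsf{Z}(\mathcal{Y})}$ from Lemma~\ref{Ext1}, which amounts to re-proving the containment $\mathsf{T}(\mathcal{X})\subseteq\Delta(\mathcal{X})$ by a longer route through the $\mathrm{Ext}$-calculation of Lemma~\ref{Ext1}. Your shortcut is clean and buys a more transparent argument; the paper's detour through Lemma~\ref{class2} has the minor merit of making explicit, in the course of the proof, the intermediate identification $\Delta(\mathcal{X})=\prescript{\perp}{}{\mathsf{Z}(\mathcal{Y})}$, which is used again elsewhere. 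Your opening observation that the two stated inclusions are equivalent by monotonicity of the orthogonality operators is also correct (the paper leaves this implicit).
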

\begin{proof}
	By Theorem~\ref{thm1} and Theorem~\ref{thm2}, we have cotorsion pairs $(\prescript{\perp}{}{\mathsf{U}^{-1}(\mathcal{Y})},\mathsf{U}^{-1}(\mathcal{Y}))$ and $(\Delta(\mathcal{X}),\Delta(\mathcal{X})^\perp)$. 
	
	We will show that $\Delta(\mathcal{X})^\perp\subseteq\mathsf{U}^{-1}(\mathcal{Y})$. By Lemma~\ref{class2}, we have
	\[\Delta(\mathcal{X})^\perp=\Delta(\prescript{\perp}{}{\mathcal{Y}})^\perp=\big(\prescript{\perp}{}{\mathsf{Z}(\mathcal{Y})}\big)^\perp.\]
	Since $\mathbb{L}_`\mathsf{F}(\mathcal{X})=0$, it follows from Lemma~\ref{class} that 
	\[\mathsf{U}^{-1}(\mathcal{Y})=\mathsf{U}^{-1}(\mathcal{X}^\perp)=\mathsf{T}(\mathcal{X})^\perp.\]
	Thus, to prove $\Delta(\mathcal{X})^\perp\subseteq\mathsf{U}^{-1}(\mathcal{Y})$, it suffices to show $\mathsf{T}(\mathcal{X})\subseteq\prescript{\perp}{}{\mathsf{Z}(\mathcal{Y})}$.
	
	For any $X\in\mathcal{B}$, $\mathrm{Com}(\mathsf{T}(X),t_X)$ is exact, since $\eta=0$. By Lemma~\ref{Ext1}, we have
	\[\mathrm{Ext}_{\mathcal{B}\ltimes\mathsf{F}}^1(\mathsf{T}(\mathcal{X}),\mathsf{Z}(\mathcal{Y}))\cong\mathrm{Ext}_\mathcal{B}^1(\mathsf{C}\mathsf{T}(\mathcal{X}),\mathcal{Y})=\mathrm{Ext}_\mathcal{B}^1(\mathcal{X,Y})=0.\]
	So $\mathsf{T}(\mathcal{X})\subseteq\prescript{\perp}{}{\mathsf{Z}(\mathcal{Y})}$. Then $\Delta(\mathcal{X})^\perp\subseteq\mathsf{U}^{-1}(\mathcal{Y})$.
\end{proof}

\begin{thm}\label{thm4}
	Let $\mathcal{B}\ltimes \mathsf{F}$ be a right trivial extension of $\mathcal{B}$ with $\mathsf{F}^2=0$. Assume that $(\mathcal{X,Y})$ is a cotorsion pair in $\mathcal{B}$ and $\mathbb{L}_1\mathsf{F}(\mathcal{X})=0$. If $\mathrm{Ext}_\mathcal{B}^1(X,\mathsf{F}(X))=0$ for any $X\in\mathcal{X}$, then $$\Delta(\mathcal{X})^\perp=\mathsf{U}^{-1}(\mathcal{Y})\;\; ,\;\; \prescript{\perp}{}{\mathsf{U}^{-1}(\mathcal{Y})}=\Delta(\mathcal{X})=\{\mathsf{T}(X)\;|\;X\in\mathcal{X}\},$$
	and $(\mathsf{T}(\mathcal{X}),\mathsf{U}^{-1}(\mathcal{Y}))$ is a cotorsion pair in $\mathcal{B}\ltimes\mathsf{F}$. 
\end{thm}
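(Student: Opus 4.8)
The plan is to reduce the four assertions to two facts about $\Delta(\mathcal{X})$ and then let the formalism of cotorsion pairs do the rest. Since $\mathbb{L}_1\mathsf{F}(\mathcal{X})=0$, Theorem~\ref{thm1} gives that $(\prescript{\perp}{}{\mathsf{U}^{-1}(\mathcal{Y})},\mathsf{U}^{-1}(\mathcal{Y}))$ is a cotorsion pair in $\mathcal{B}\ltimes\mathsf{F}$, Theorem~\ref{thm2} gives that $(\Delta(\mathcal{X}),\Delta(\mathcal{X})^\perp)$ is a cotorsion pair, and Proposition~\ref{pro3.10} already supplies the inclusion $\prescript{\perp}{}{\mathsf{U}^{-1}(\mathcal{Y})}\subseteq\Delta(\mathcal{X})$. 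It therefore suffices to establish \emph{(a)} $\Delta(\mathcal{X})=\{\mathsf{T}(X)\mid X\in\mathcal{X}\}$ and \emph{(b)} $\mathsf{T}(X)\in\prescript{\perp}{}{\mathsf{U}^{-1}(\mathcal{Y})}$ for all $X\in\mathcal{X}$. Granting these, (a), (b) and Proposition~\ref{pro3.10} chain into
\[\Delta(\mathcal{X})\subseteq\{\mathsf{T}(X)\mid X\in\mathcal{X}\}\subseteq\prescript{\perp}{}{\mathsf{U}^{-1}(\mathcal{Y})}\subseteq\Delta(\mathcal{X}),\]
so all three classes agree; passing to $(-)^\perp$ and using that $(\prescript{\perp}{}{\mathsf{U}^{-1}(\mathcal{Y})},\mathsf{U}^{-1}(\mathcal{Y}))$ is a cotorsion pair yields $\Delta(\mathcal{X})^\perp=\mathsf{U}^{-1}(\mathcal{Y})$; and then $(\mathsf{T}(\mathcal{X}),\mathsf{U}^{-1}(\mathcal{Y}))=(\Delta(\mathcal{X}),\Delta(\mathcal{X})^\perp)$ is a cotorsion pair by Theorem~\ref{thm2}.

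Fact (b) is immediate: for $X\in\mathcal{X}$ and $(Y,g)\in\mathsf{U}^{-1}(\mathcal{Y})$ (so $Y\in\mathcal{Y}=\mathcal{X}^\perp$), Lemma~\ref{ext1}(2) with $n=1$ — applicable because $\mathbb{L}_1\mathsf{F}(X)=0$ — gives $\mathrm{Ext}_{\mathcal{B}\ltimes\mathsf{F}}^1(\mathsf{T}(X),(Y,g))\cong\mathrm{Ext}_\mathcal{B}^1(X,Y)=0$.

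For fact (a), the inclusion $\{\mathsf{T}(X)\mid X\in\mathcal{X}\}\subseteq\Delta(\mathcal{X})$ is recorded in Subsection~\ref{3.1}. For the converse, take $(Y,f)\in\Delta(\mathcal{X})$; since $\mathsf{F}^2=0$, this means $f\colon\mathsf{F}(Y)\to Y$ is a monomorphism and $C:=\mathrm{Coker}(f)\in\mathcal{X}$, so we have a short exact sequence $0\to\mathsf{F}(Y)\xrightarrow{f}Y\xrightarrow{\rho}C\to0$ in $\mathcal{B}$. Applying the right exact functor $\mathsf{F}$ and using $\mathsf{F}^2=0$ (so $\mathsf{F}(\mathsf{F}(Y))=0$) forces $\mathsf{F}(\rho)\colon\mathsf{F}(Y)\to\mathsf{F}(C)$ to be an isomorphism; hence $\mathrm{Ext}_\mathcal{B}^1(C,\mathsf{F}(Y))\cong\mathrm{Ext}_\mathcal{B}^1(C,\mathsf{F}(C))=0$ by hypothesis (as $C\in\mathcal{X}$), so the sequence splits and we may write $Y=C\oplus\mathsf{F}(C)$ with $\rho$ the projection onto $C$ (note $\mathsf{F}(Y)=\mathsf{F}(C)$, again since $\mathsf{F}^2=0$). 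Then $\mathrm{im}(f)=\ker(\rho)$ is the second summand, and $f$, being a monomorphism onto it, has matrix form $\left(\begin{smallmatrix}0\\f'\end{smallmatrix}\right)$ with $f'$ an automorphism of $\mathsf{F}(C)$. A direct check — using that $\mathsf{F}^2=0$ makes $\mathsf{F}$ of an endomorphism of $C\oplus\mathsf{F}(C)$ reduce to $\mathsf{F}$ of its $C$-to-$C$ entry, and that $\eta=0$ makes $t_C=\left(\begin{smallmatrix}0\\1\end{smallmatrix}\right)$ — shows that $\left(\begin{smallmatrix}1&0\\0&f'\end{smallmatrix}\right)$ is an isomorphism $\mathsf{T}(C)\xrightarrow{\sim}(Y,f)$ in $\mathcal{B}\ltimes\mathsf{F}$. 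Thus $(Y,f)\cong\mathsf{T}(C)$ with $C\in\mathcal{X}$, which proves (a).

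The main obstacle is the last step of (a): after splitting the short exact sequence one still has to recognise $(Y,f)$ as $\mathsf{T}(C)$ \emph{as an object of $\mathcal{B}\ltimes\mathsf{F}$}, which requires tracking the action of $\mathsf{F}$ on $C\oplus\mathsf{F}(C)$ and comparing the structure morphisms carefully; by contrast, fact (b), the chain of inclusions, and the passage to perpendiculars are all formal.
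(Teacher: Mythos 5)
Your proof is correct and follows essentially the same route as the paper: your fact~(a) is exactly the content of the paper's Lemma~\ref{DT}, proved by the same splitting argument (the paper builds the isomorphism $\bigl(\begin{smallmatrix}\rho\\v\end{smallmatrix}\bigr):(X,f)\to\mathsf{T}(\mathrm{Coker}(f))$ from the splitting data, while you decompose $Y\cong C\oplus\mathsf{F}(C)$ and build the inverse). The remaining steps — Proposition~\ref{pro3.10} for $\prescript{\perp}{}{\mathsf{U}^{-1}(\mathcal{Y})}\subseteq\Delta(\mathcal{X})$, Lemma~\ref{ext1} for $\mathsf{T}(\mathcal{X})\subseteq\prescript{\perp}{}{\mathsf{U}^{-1}(\mathcal{Y})}$, and passing to perpendiculars — match the paper's argument exactly.
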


To prove Theorem~\ref{thm4}, we need the following characterization of $\Delta(\mathcal{X})$.

\begin{lem}\label{DT}
	Let $\mathcal{B}\ltimes\mathsf{F}$ be a right trivial extension of an abelian category $\mathcal{B}$. For a class $\mathcal{X}$ of objects of $\mathcal{B}$, if $\mathrm{Ext}_\mathcal{B}^1(X,\mathsf{F}(X))=0$ for any $X\in\mathcal{X}$, then $\Delta(\mathcal{X})=\mathsf{T}(\mathcal{X})$.
\end{lem}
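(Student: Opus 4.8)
The plan is to establish the inclusion $\Delta(\mathcal{X})\subseteq\mathsf{T}(\mathcal{X})$, the reverse inclusion $\mathsf{T}(\mathcal{X})\subseteq\Delta(\mathcal{X})$ having already been observed in Subsection~\ref{3.1}. So I would fix $(X,f)\in\Delta(\mathcal{X})$, put $C:=\mathrm{Coker}(f)\in\mathcal{X}$ with canonical epimorphism $\rho:X\to C$, and aim to construct an isomorphism $\mathsf{T}(C)\xrightarrow{\sim}(X,f)$ in $\mathcal{B}\ltimes\mathsf{F}$.

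The first step is to unpack $(X,f)$ via Lemma~\ref{Com}. Since $\eta=0$, applying the right exact functor $\mathsf{F}$ to the exact sequence $\mathsf{F}(X)\xrightarrow{f}X\xrightarrow{\rho}C\to0$ identifies $\mathrm{Coker}(\mathsf{F}(f)-\eta_X)=\mathrm{Coker}(\mathsf{F}(f))$ with $\mathsf{F}(C)$, the canonical epimorphism $\pi$ of Lemma~\ref{Com} becoming $\mathsf{F}(\rho)$ under this identification. Then Lemma~\ref{Com}~(1) yields a morphism $\gamma:\mathsf{F}(C)\to X$ with $f=\gamma\circ\mathsf{F}(\rho)$ and an exact sequence $\mathsf{F}(C)\xrightarrow{\gamma}X\xrightarrow{\rho}C\to0$; since $\mathrm{Com}(X,f)$ is exact, Lemma~\ref{Com}~(2) shows $\gamma$ is a monomorphism, so we obtain a short exact sequence
\[0\to\mathsf{F}(C)\xrightarrow{\gamma}X\xrightarrow{\rho}C\to0\]
in $\mathcal{B}$.

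Next, because $C\in\mathcal{X}$ the hypothesis gives $\mathrm{Ext}_\mathcal{B}^1(C,\mathsf{F}(C))=0$, so the above sequence splits; I would choose a section $s:C\to X$ of $\rho$ and let $\theta:C\oplus\mathsf{F}(C)\to X$ be the morphism with components $s$ and $\gamma$, which is an isomorphism in $\mathcal{B}$. The remaining, and main, point is to verify that $\theta$ underlies a morphism $\mathsf{T}(C)\to(X,f)$ in $\mathcal{B}\ltimes\mathsf{F}$, that is, that $\theta\circ t_C=f\circ\mathsf{F}(\theta)$ with $t_C=\left(\begin{smallmatrix}0&0\\1&0\end{smallmatrix}\right)$. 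Writing out both composites in block form, this reduces to the two identities $f\circ\mathsf{F}(s)=\gamma$ and $f\circ\mathsf{F}(\gamma)=0$, both of which follow by applying $\mathsf{F}$ to $\rho\circ s=\mathrm{id}_C$ and $\rho\circ\gamma=0$ and using $f=\gamma\circ\mathsf{F}(\rho)$. Once this is checked, $\theta$ is an isomorphism in $\mathcal{B}\ltimes\mathsf{F}$, hence $(X,f)\cong\mathsf{T}(C)\in\mathsf{T}(\mathcal{X})$, and we are done.

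The parts I expect to require care are the identification of $\mathrm{Coker}(\mathsf{F}(f))$ with $\mathsf{F}(\mathrm{Coker}(f))$ coming from right exactness of $\mathsf{F}$ (so that the morphism $\gamma$ produced by Lemma~\ref{Com} has exactly the form needed), and the block-matrix verification that the underlying $\mathcal{B}$-isomorphism $\theta$ respects the structure morphisms $t_C$ and $f$; everything else is routine bookkeeping with Lemma~\ref{Com}.
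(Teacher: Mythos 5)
Your proposal is correct and follows the same route as the paper's proof: split the short exact sequence $0\to\mathsf{F}(C)\xrightarrow{\gamma}X\xrightarrow{\rho}C\to0$ supplied by Lemma~\ref{Com} using the hypothesis $\mathrm{Ext}^1_\mathcal{B}(C,\mathsf{F}(C))=0$, and promote the resulting $\mathcal{B}$-isomorphism to one in $\mathcal{B}\ltimes\mathsf{F}$. The only cosmetic difference is that you exhibit the isomorphism $\mathsf{T}(C)\to(X,f)$ with components $(s,\gamma)$, while the paper writes its inverse $\bigl(\begin{smallmatrix}\rho\\v\end{smallmatrix}\bigr):(X,f)\to\mathsf{T}(C)$; the verifications are the same in both directions.
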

\begin{proof}
	 We mention that $\mathsf{T}(\mathcal{X})\subseteq\Delta(\mathcal{X})$. To see that they are equal, it remains to prove that $\Delta(\mathcal{X})\subseteq\mathsf{T}(\mathcal{X})$.
	
	Let $(X,f)\in\Delta(\mathcal{X})$. Then $\mathrm{Com}(X,f)$ is exact and $\mathrm{Coker}(f)\in\mathcal{X}$. Since $\mathsf{F}$ is right exact and $\eta=0$, by Lemma~\ref{Com}, there is an exact sequence
	\begin{equation}\label{seq0}0\longrightarrow\mathsf{F}(\mathrm{Coker}(f))\xlongrightarrow{\gamma} X\xlongrightarrow{\rho}\mathrm{Coker}(f)\rightarrow0.\end{equation}
	Since $\mathrm{Ext}_\mathcal{B}^1(X,\mathsf{F}(X))=0$ for any $X\in\mathcal{X}$, the sequence (\ref{seq0}) splits. Therefore, there are morphisms $u:\mathrm{Coker}(f)\rightarrow X$ and $v:X\rightarrow\mathsf{F}(\mathrm{Coker}(f))$ such that 
	\[\rho\circ u=1,\;v\circ\gamma=1\;\mbox{ and }\;u\circ\rho+\gamma\circ v=1_X.\]
	Then $\big(\begin{smallmatrix}
		\rho\\ v
	\end{smallmatrix}\big):(X,f)\rightarrow \mathsf{T}(\mathrm{Coker}(f))$ is an isomorphism in $\mathcal{B}\ltimes\mathsf{F}$. Thus $(X,f)\in\mathsf{T}(\mathcal{X})$ and so $\Delta(\mathcal{X})\subseteq\mathsf{T}(\mathcal{X})$.
\end{proof}

\begin{proof}[Proof of Theorem~\ref{thm4}]
	 Since $\mathrm{Ext}_\mathcal{B}^1(X,\mathsf{F}(X))=0$ for any $X\in\mathcal{X}$, it follows from Lemma~\ref{DT} that $\Delta(\mathcal{X})=\mathsf{T}(\mathcal{X})$.	
	By Proposition~\ref{pro3.10}, one has $\prescript{\perp}{}{\mathsf{U}^{-1}(\mathcal{Y})}\subseteq\Delta(\mathcal{X})$. On the other hand, let $X\in\mathcal{X}$. For any $(Y,f)\in\mathsf{U}^{-1}(\mathcal{Y})$, since $\mathbb{L}_1\mathsf{F}(\mathcal{X})=0$, it follows from Lemma~\ref{ext1} that
	\[\mathrm{Ext}_{\mathcal{B}\ltimes\mathsf{F}}^1(\mathsf{T}(X),(Y,f))\cong\mathrm{Ext}_\mathcal{B}^1(X,Y)=0.\]
	Thus $\mathsf{T}(X)\in\prescript{\perp}{}{\mathsf{U}^{-1}(\mathcal{Y})}$ and so $\Delta(\mathcal{X})=\mathsf{T}(\mathcal{X})\subseteq \prescript{\perp}{}{\mathsf{U}^{-1}(\mathcal{Y})}$. Then we get $\Delta(\mathcal{X})=\mathsf{T}(\mathcal{X})= \prescript{\perp}{}{\mathsf{U}^{-1}(\mathcal{Y})}$. Therefore, $(\Delta(\mathcal{X}),\Delta(\mathcal{X})^\perp)=(\prescript{\perp}{}{\mathsf{U}^{-1}(\mathcal{Y})},\mathsf{U}^{-1}(\mathcal{Y}))$ and $(\mathsf{T}(\mathcal{X}),\mathsf{U}^{-1}(\mathcal{Y}))$ is a cotorsion pair in $\mathcal{B}\ltimes\mathsf{F}$. 
\end{proof}

\subsection{Completeness of cotorsion pairs}

The significance of the completeness of the cotorsion pair is not only reflected in the theory itself, but also in the abelian model structure.

\begin{thm}\label{thm5}
	Let $\mathcal{B}\ltimes\mathsf{F}$ be a right trivial extension of an abelian category $\mathcal{B}$. Assume that $(\mathcal{X,Y})$ is a hereditary complete cotorsion pair. If $\mathbb{L}_1\mathsf{F}(\mathcal{X})=0$ and $\mathsf{F}(\mathcal{X})\subseteq\mathcal{Y}$, then $(\prescript{\perp}{}{\mathsf{U}^{-1}(\mathcal{Y})},\mathsf{U}^{-1}(\mathcal{Y}))$ is a hereditary complete cotorsion pair.
\end{thm}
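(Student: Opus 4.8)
The plan is to get heredity for free from Theorem~\ref{thm1} and to prove completeness by writing down an explicit special right $\prescript{\perp}{}{\mathsf{U}^{-1}(\mathcal{Y})}$-approximation of every object. Since $\mathbb{L}_1\mathsf{F}(\mathcal{X})=0$, Theorem~\ref{thm1} already gives that $\bigl(\prescript{\perp}{}{\mathsf{U}^{-1}(\mathcal{Y})},\,\mathsf{U}^{-1}(\mathcal{Y})\bigr)$ is a cotorsion pair in $\mathcal{B}\ltimes\mathsf{F}$, and that it is hereditary because $(\mathcal{X},\mathcal{Y})$ is; so the whole content is completeness. By \cite[Proposition~7.1.7]{EJ} it is enough to produce, for every $(X,f)\in\mathcal{B}\ltimes\mathsf{F}$, an exact sequence $0\to(Y',g')\to(C,h)\to(X,f)\to0$ with $(C,h)\in\prescript{\perp}{}{\mathsf{U}^{-1}(\mathcal{Y})}$ and $(Y',g')\in\mathsf{U}^{-1}(\mathcal{Y})$.

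The construction I would use is to resolve $X$ inside $\mathcal{B}$ first and only then apply $\mathsf{T}$. Using completeness of $(\mathcal{X},\mathcal{Y})$, pick a special right $\mathcal{X}$-approximation $0\to Y_X\to X_X\xrightarrow{p}X\to0$ in $\mathcal{B}$ with $X_X\in\mathcal{X}$ and $Y_X\in\mathcal{Y}$, let $\varepsilon\colon\mathsf{T}(X)\to(X,f)$ be the counit of the adjunction $(\mathsf{T},\mathsf{U})$, and set $\phi:=\varepsilon\circ\mathsf{T}(p)\colon\mathsf{T}(X_X)\to(X,f)$; on underlying objects $\phi=(p,\ f\,\mathsf{F}(p))\colon X_X\oplus\mathsf{F}(X_X)\to X$. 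The claim is that $0\to\mathrm{Ker}\,\phi\to\mathsf{T}(X_X)\xrightarrow{\phi}(X,f)\to0$ is the required sequence.

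The three points to check are: (1) $\phi$ is epic, because its underlying first component is the epimorphism $p$ and exactness in $\mathcal{B}\ltimes\mathsf{F}$ is detected on underlying objects (Proposition~\ref{pro1}(1)); (2) $\mathsf{T}(X_X)\in\prescript{\perp}{}{\mathsf{U}^{-1}(\mathcal{Y})}$, since $X_X\in\mathcal{X}$ gives $\mathbb{L}_1\mathsf{F}(X_X)=0$, so Lemma~\ref{ext1}(2) yields $\mathrm{Ext}^1_{\mathcal{B}\ltimes\mathsf{F}}(\mathsf{T}(X_X),(Y',g'))\cong\mathrm{Ext}^1_{\mathcal{B}}(X_X,Y')=0$ for all $(Y',g')\in\mathsf{U}^{-1}(\mathcal{Y})$, because $Y'\in\mathcal{Y}=\mathcal{X}^\perp$; (3) $\mathrm{Ker}\,\phi\in\mathsf{U}^{-1}(\mathcal{Y})$, since on underlying objects the second projection $\mathrm{Ker}\,\phi\to\mathsf{F}(X_X)$ is epic (given $b$, lift $-f\,\mathsf{F}(p)(b)$ along $p$) with kernel $Y_X$, giving an exact sequence $0\to Y_X\to\mathsf{U}(\mathrm{Ker}\,\phi)\to\mathsf{F}(X_X)\to0$ in $\mathcal{B}$ in which $Y_X\in\mathcal{Y}$ and $\mathsf{F}(X_X)\in\mathsf{F}(\mathcal{X})\subseteq\mathcal{Y}$ by hypothesis, so $\mathsf{U}(\mathrm{Ker}\,\phi)\in\mathcal{Y}$ because $\mathcal{Y}$ is closed under extensions. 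Completeness of $(\prescript{\perp}{}{\mathsf{U}^{-1}(\mathcal{Y})},\mathsf{U}^{-1}(\mathcal{Y}))$ then follows from \cite[Proposition~7.1.7]{EJ}.

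The main obstacle is that the obvious candidate — the canonical counit sequence $0\to(\mathsf{F}X,-\mathsf{F}f)\to\mathsf{T}(X)\to(X,f)\to0$ (a form of Lemma~\ref{can-seq}) used directly — does not work for a general object, because for arbitrary $X$ neither $\mathsf{T}(X)\in\prescript{\perp}{}{\mathsf{U}^{-1}(\mathcal{Y})}$ (that would need $\mathbb{L}_1\mathsf{F}(X)=0$) nor $(\mathsf{F}X,-\mathsf{F}f)\in\mathsf{U}^{-1}(\mathcal{Y})$ (that would need $\mathsf{F}(X)\in\mathcal{Y}$) holds. Interposing the $\mathcal{B}$-approximation $X_X\in\mathcal{X}$ is exactly what brings both standing hypotheses $\mathbb{L}_1\mathsf{F}(\mathcal{X})=0$ and $\mathsf{F}(\mathcal{X})\subseteq\mathcal{Y}$ into play at once; the only genuine computation that remains is the identification of $\mathrm{Ker}\,\phi$ via the three-term sequence in step (3), and Proposition~\ref{cover} (closure under extensions of the class of objects admitting special right approximations) can be used to repackage this step into a more structural argument.
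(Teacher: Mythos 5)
Your proposal is correct, and it takes a genuinely different route from the paper. The paper first uses Lemma~\ref{can-seq} to break $(X,f)$ into the two $\mathsf{Z}$-objects $\mathsf{Z}(\mathrm{Im}(f))$ and $\mathsf{Z}(\mathrm{Coker}(f))$, constructs for each a \emph{special left} $\mathsf{U}^{-1}(\mathcal{Y})$-approximation by applying $\mathsf{T}$ to a special left $\mathcal{Y}$-approximation of $\mathrm{Im}(f)$, resp.\ $\mathrm{Coker}(f)$, in $\mathcal{B}$ (where $\mathsf{F}(\mathcal{X})\subseteq\mathcal{Y}$ is used to see that the middle term lies in $\mathsf{U}^{-1}(\mathcal{Y})$), and finally invokes Proposition~\ref{cover}(2) to glue these into a special left approximation of $(X,f)$. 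You instead build a \emph{special right} $\prescript{\perp}{}{\mathsf{U}^{-1}(\mathcal{Y})}$-approximation of $(X,f)$ in a single step: apply $\mathsf{T}$ to a special right $\mathcal{X}$-approximation $p\colon X_X\twoheadrightarrow X$ in $\mathcal{B}$ and postcompose with the counit $\varepsilon_{(X,f)}$, giving $\phi=(p,\;f\circ\mathsf{F}(p))\colon\mathsf{T}(X_X)\to(X,f)$. The identification of $\mathsf{U}(\Ker\phi)$ as an extension $0\to Y_X\to\mathsf{U}(\Ker\phi)\to\mathsf{F}(X_X)\to0$ is rigorous (snake lemma on the split row $0\to X_X\to X_X\oplus\mathsf{F}(X_X)\to\mathsf{F}(X_X)\to0$ mapping to $X\xto{=}X\to0$), and then $Y_X\in\mathcal{Y}$ and $\mathsf{F}(X_X)\in\mathsf{F}(\mathcal{X})\subseteq\mathcal{Y}$ force $\Ker\phi\in\mathsf{U}^{-1}(\mathcal{Y})$ by extension-closure of $\mathcal{Y}$. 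The upshot is that your argument is shorter and dispenses entirely with Lemma~\ref{can-seq} and Proposition~\ref{cover}; the two standing hypotheses $\mathbb{L}_1\mathsf{F}(\mathcal{X})=0$ and $\mathsf{F}(\mathcal{X})\subseteq\mathcal{Y}$ enter in exactly one place each (ensuring $\mathsf{T}(X_X)\in\prescript{\perp}{}{\mathsf{U}^{-1}(\mathcal{Y})}$ via Lemma~\ref{class}, resp.\ $\Ker\phi\in\mathsf{U}^{-1}(\mathcal{Y})$), which makes their roles arguably more transparent than in the paper's version. What the paper's route buys in return is symmetry: the same two-step $\mathsf{Z}$-decomposition plus Proposition~\ref{cover} template recurs in several other arguments in the paper, so the authors reuse infrastructure they have already set up.
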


 For the proof of Theorem~\ref{thm5}, we need the following easy observations.

\begin{lem}\label{can-seq}
	Let $\mathcal{B}\ltimes_\eta\mathsf{F}$ be an $\eta$-extension of an abelian category $\mathcal{B}$. Let $(X,f)\in\mathcal{B}\ltimes_\eta\mathsf{F}$, $\rho:X\rightarrow\mathrm{Coker}(f)$ be the canonical epimorphism and $i:\mathrm{Im}(f)\rightarrow X$ be the canonical monomorphism. Then there exists a morphism $g:\mathsf{F}(\mathrm{Im}f)\rightarrow\mathrm{Im}(f)$ such that
	\[0\rightarrow(\mathrm{Im}(f),g)\xrightarrow{i}(X,f)\xrightarrow{\rho}(\mathrm{Coker}(f),0)\rightarrow0\]
	is an exact sequence in $\mathcal{B}\ltimes_\eta\mathsf{F}$. 
	Moreover, if $\eta=0$, then $g=0.$
\end{lem}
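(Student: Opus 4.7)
My plan is to construct $g$ from the epi-mono factorization of $f$ and then verify everything in three short steps. Factor the underlying morphism as $f = i \circ \pi$, where $\pi : \mathsf{F}(X) \twoheadrightarrow \mathrm{Im}(f)$ and $i : \mathrm{Im}(f) \hookrightarrow X$. Since the image of the composite $f \circ \mathsf{F}(i) : \mathsf{F}(\mathrm{Im}(f)) \to X$ lies inside $\mathrm{Im}(f)$ and $i$ is a monomorphism, there is a unique morphism $g : \mathsf{F}(\mathrm{Im}(f)) \to \mathrm{Im}(f)$ with $i \circ g = f \circ \mathsf{F}(i)$. This identity is exactly the condition that $i : (\mathrm{Im}(f),g) \to (X,f)$ is a morphism in $\mathcal{B}\ltimes_\eta\mathsf{F}$, once we know $(\mathrm{Im}(f),g)$ is an object.

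The step I expect to require the most care is verifying that $(\mathrm{Im}(f), g)$ satisfies the associativity condition $g\circ\mathsf{F}(g) = g\circ \eta_{\mathrm{Im}(f)}$. Since $i$ is a monomorphism, it suffices to check this after composing with $i$ on the left. On one hand,
\[
i \circ g \circ \mathsf{F}(g) = f \circ \mathsf{F}(i) \circ \mathsf{F}(g) = f \circ \mathsf{F}(i \circ g) = f \circ \mathsf{F}(f) \circ \mathsf{F}^2(i) = f \circ \eta_X \circ \mathsf{F}^2(i),
\]
using the defining identity $f \circ \mathsf{F}(f) = f \circ \eta_X$ for the object $(X,f)$. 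On the other hand, by naturality of $\eta$ applied to $i$, we have $\eta_X \circ \mathsf{F}^2(i) = \mathsf{F}(i) \circ \eta_{\mathrm{Im}(f)}$, so
\[
f \circ \eta_X \circ \mathsf{F}^2(i) = f \circ \mathsf{F}(i) \circ \eta_{\mathrm{Im}(f)} = i \circ g \circ \eta_{\mathrm{Im}(f)}.
\]
Cancelling $i$ gives the required identity, so $(\mathrm{Im}(f), g) \in \mathcal{B}\ltimes_\eta\mathsf{F}$.

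For the remaining pieces, $\rho$ is a morphism $(X,f) \to (\mathrm{Coker}(f), 0)$ because $\rho \circ f = 0 = 0 \circ \mathsf{F}(\rho)$, and the sequence $0 \to \mathrm{Im}(f) \xrightarrow{i} X \xrightarrow{\rho} \mathrm{Coker}(f) \to 0$ is exact in $\mathcal{B}$ by definition of image and cokernel, so exactness in $\mathcal{B}\ltimes_\eta\mathsf{F}$ follows from Proposition~\ref{pro1}(1).

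Finally, when $\eta = 0$, the object condition becomes $f \circ \mathsf{F}(f) = 0$. Since $\mathsf{F}$ is right exact and $\pi$ is an epimorphism, $\mathsf{F}(\pi)$ is also an epimorphism, and
\[
i \circ g \circ \mathsf{F}(\pi) = f \circ \mathsf{F}(i) \circ \mathsf{F}(\pi) = f \circ \mathsf{F}(f) = 0.
\]
Cancelling the monomorphism $i$ on the left and the epimorphism $\mathsf{F}(\pi)$ on the right yields $g = 0$, completing the proof.
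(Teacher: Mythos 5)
Your proof is correct and follows essentially the same route as the paper: you factor $f$ through its image, define $g$ so that $i \circ g = f \circ \mathsf{F}(i)$ (equivalently $g = \pi \circ \mathsf{F}(i)$), verify the associativity condition by the same chain of equalities after composing with the monomorphism $i$, and obtain $g=0$ in the $\eta=0$ case by precomposing with the epimorphism $\mathsf{F}(\pi)$. The only cosmetic difference is that the paper defines $g$ by the explicit formula $g = k\circ\mathsf{F}(i)$ while you characterize it by the universal property of $i$, and you additionally spell out the (routine) checks that $\rho$ is a morphism and that exactness passes to $\mathcal{B}\ltimes_\eta\mathsf{F}$.
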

\begin{proof}
	There is a decomposition $f = k\circ i$ with $k$ an epimorphism. Let $g=k\circ\mathsf{F}(i)$. Then we have $i\circ g=f\circ\mathsf{F}(i)$ and
	\begin{equation}\label{3.4}\begin{aligned}
		i\circ g\circ \mathsf{F}(g)&=i\circ k\circ \mathsf{F}(i)\circ\mathsf{F}(g)
		=f\circ\mathsf{F}(i\circ g)\\
		&=f\circ\mathsf{F}(f)\circ\mathsf{F}^2(i)
		=f\circ\eta_X\circ\mathsf{F}^2(i)\\
		&=f\circ\mathsf{F}(i)\circ\eta_{\mathrm{Im}(f)}
		=i\circ g\circ\eta_{\mathrm{Im}(f)}.
	\end{aligned}\end{equation}
Since $i$ is a monomorphism, we get $g\circ\mathsf{F}(g)=g\circ\eta_{\mathrm{Im}(f)}$. Therefore, $(\mathrm{Im}(f),g)\in\mathcal{B}\ltimes_\eta\mathsf{F}$ and $0\rightarrow(\mathrm{Im}(f),g)\xrightarrow{i}(X,f)\xrightarrow{\rho}(\mathrm{Coker}(f),0)\rightarrow0$ is an exact sequence in $\mathcal{B}\ltimes_\eta\mathsf{F}$. Moreover, if $\eta=0$, then we have
\[
	i\circ g\circ \mathsf{F}(k)=f\circ\mathsf{F}(i)\circ\mathsf{F}(k)=f\circ\mathsf{F}(f)=0.
\]
Since $i$ is a monomorphism and $\mathsf{F}(k)$ is an epimorphism, we get $g=0$. 
\end{proof}

\begin{proof}[Proof of Theorem~\ref{thm5}]
	 By Theorem~\ref{thm1}, $(\prescript{\perp}{}{\mathsf{U}^{-1}(\mathcal{Y})},\mathsf{U}^{-1}(\mathcal{Y}))$ is a hereditary cotorsion pair in $\mathcal{B}\ltimes\mathsf{F}$.
	
	Let $(X,f)\in\mathcal{B}\ltimes\mathsf{F}$. By Lemma~\ref{can-seq}, there is an exact sequence \begin{equation}\label{s}0\longrightarrow\mathsf{Z}(\mathrm{Im}(f))\longrightarrow(X,f)\longrightarrow\mathsf{Z}(\mathrm{Coker}(f))\longrightarrow0.\end{equation}
	Since $(\mathcal{X,Y})$ is a complete cotorsion pair, there is an exact sequence $$0\longrightarrow\mathrm{Coker}(f)\xlongrightarrow{i_1} Y_1\xlongrightarrow{\pi_1} X_1\longrightarrow0$$
	with $Y_1\in\mathcal{Y}$ and $X_1\in\mathcal{X}$. Then the sequence
	$$
	0\longrightarrow\mathrm{Coker}(f)\xlongrightarrow{\big(\begin{smallmatrix}i_1\\ 0\end{smallmatrix}\big)} Y_1\oplus\mathsf{F}(X_1)\xlongrightarrow{\big(\begin{smallmatrix}\pi_1&0\\ 0&1\end{smallmatrix}\big)} X_1\oplus\mathsf{F}(X_1)\longrightarrow0
	$$
	is exact. Let $\beta=\begin{pmatrix}
		0&0\\ \mathsf{F}(\pi_1)&0
	\end{pmatrix}$. Then we have the following commutative diagram
	\[\begin{tikzcd}
		& \mathsf{F}(\mathrm{Coker}(f)) \arrow[d, "0"] \arrow[r] & \mathsf{F}(Y_1)\oplus\mathsf{F}^2(X_1) \arrow[d, "\beta"] \arrow[r]                                     & \mathsf{F}(X_1)\oplus\mathsf{F}^2(X_1) \arrow[d,"t_{X_1}"] &   \\
		0 \arrow[r] & \mathrm{Coker}(f) \arrow[r, "\big(\begin{smallmatrix}i_1\\ 0\end{smallmatrix}\big)"]                                        & Y_1\oplus\mathsf{F}(X_1) \arrow[r, "\big(\begin{smallmatrix}\pi_1\;\;0\\ 0\;\;\;1\end{smallmatrix}\big)"] & X_1\oplus\mathsf{F}(X_1) \arrow[r]                                                                                & 0
	\end{tikzcd}\]
	 Since $\beta\circ\mathsf{F}(\beta)=0$, we get $(Y_1\oplus\mathsf{F}(X_1),\beta)\in\mathcal{B}\ltimes\mathsf{F}$.	Thus, the sequence $$0\rightarrow\mathsf{Z}(\mathrm{Coker}(f))\rightarrow(Y_1\oplus\mathsf{F}(X_1),\beta)\rightarrow\mathsf{T}(X_1)\rightarrow0$$ is exact in $\mathcal{B}\ltimes\mathsf{F}$. By Lemma~\ref{class}, we have $\mathsf{T}(\mathcal{X})\subseteq\prescript{\perp}{}{\mathsf{U}^{-1}(\mathcal{X}^\perp)}=\prescript{\perp}{}{\mathsf{U}^{-1}(\mathcal{Y})}$. Then $\mathsf{T}(X_1)\in\prescript{\perp}{}{\mathsf{U}^{-1}(\mathcal{Y})}$. Since $\mathsf{F}(\mathcal{X})\subseteq\mathcal{Y}$, we get $(Y_1\oplus\mathsf{F}(X_1),\beta)\in\mathsf{U}^{-1}(\mathcal{Y})$. Therefore, $\mathsf{Z}(\mathrm{Coker}(f))$ has a special left $\mathsf{U}^{-1}(\mathcal{Y})$-approximation.
	
	Similarly, there is an exact sequence $$0\longrightarrow\mathrm{Im}(f)\xlongrightarrow{i_2} Y_2\xlongrightarrow{\pi_2} X_2\longrightarrow0$$
	with $Y_2\in\mathcal{Y}$ and $X_2\in\mathcal{X}$. Then the sequence
	$$
	0\longrightarrow\mathrm{Im}(f)\xlongrightarrow{\big(\begin{smallmatrix}i_2\\ 0\end{smallmatrix}\big)} Y_2\oplus\mathsf{F}(X_2)\xlongrightarrow{\big(\begin{smallmatrix}\pi_2&0\\ 0&1\end{smallmatrix}\big)} X_2\oplus\mathsf{F}(X_2)\longrightarrow0
	$$
	is exact. Let $\alpha=\begin{pmatrix}
		0&0\\ \mathsf{F}(\pi_2)&0
	\end{pmatrix}$. Then we have the following commutative diagram
	\[\begin{tikzcd}
		& \mathsf{F}(\mathrm{Im}(f)) \arrow[d, "0"] \arrow[r] & \mathsf{F}(Y_2)\oplus\mathsf{F}^2(X_2) \arrow[d, "\alpha"] \arrow[r]                                     & \mathsf{F}(X_2)\oplus\mathsf{F}^2(X_2) \arrow[d,"t_{X_2}"] &   \\
		0 \arrow[r] & \mathrm{Im}(f) \arrow[r, "\big(\begin{smallmatrix}i_2\\ 0\end{smallmatrix}\big)"]                                        & Y_2\oplus\mathsf{F}(X_2) \arrow[r, "\big(\begin{smallmatrix}\pi_2\;\;0\\ 0\;\;\;1\end{smallmatrix}\big)"] & X_2\oplus\mathsf{F}(X_2) \arrow[r]                                                                                & 0
	\end{tikzcd}\]
	Thus, the sequence $0\rightarrow\mathsf{Z}(\mathrm{Im}(f))\rightarrow(Y_2\oplus\mathsf{F}(X_2),\alpha)\rightarrow\mathsf{T}(X_2)\rightarrow0$ is exact. Then $\mathsf{Z}(\mathrm{Im}(f))$ has a special left $\mathsf{U}^{-1}(\mathcal{Y})$-approximation.
	
	By (\ref{s}) and Proposition~\ref{cover}, $(X,f)$ has a special left $\mathsf{U}^{-1}(\mathcal{Y})$-approximation. This proves the completeness of $(\prescript{\perp}{}{\mathsf{U}^{-1}(\mathcal{Y})},\mathsf{U}^{-1}(\mathcal{Y}))$.
\end{proof}

\begin{cor}\label{cor1}
	Let $\mathcal{B}\ltimes\mathsf{F}$ be a right trivial extension of $\mathcal{B}$ with $\mathsf{F}^2=0$. Assume that $(\mathcal{X,Y})$ is a hereditary complete cotorsion pair. If $\mathbb{L}_1(\mathcal{X})=0$ and $\mathsf{F}(\mathcal{X})\subseteq\mathcal{Y}$, then $(\mathsf{T}(\mathcal{X}),\mathsf{U}^{-1}(\mathcal{Y}))$ is a hereditary complete cotorsion pair.
\end{cor}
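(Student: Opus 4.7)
The plan is to deduce Corollary~\ref{cor1} essentially for free by combining Theorem~\ref{thm5} with the identification of $\prescript{\perp}{}{\mathsf{U}^{-1}(\mathcal{Y})}$ given in Theorem~\ref{thm4}. The only nontrivial observation is that the single hypothesis $\mathsf{F}(\mathcal{X})\subseteq\mathcal{Y}$ is strong enough to trigger both theorems.

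First I would verify that the extra Ext-vanishing needed by Theorem~\ref{thm4} is automatic. Indeed, since $(\mathcal{X},\mathcal{Y})$ is a cotorsion pair, $\mathrm{Ext}_\mathcal{B}^1(\mathcal{X},\mathcal{Y})=0$, and the assumption $\mathsf{F}(\mathcal{X})\subseteq\mathcal{Y}$ immediately gives $\mathrm{Ext}_\mathcal{B}^1(X,\mathsf{F}(X))=0$ for every $X\in\mathcal{X}$. Together with $\mathbb{L}_1\mathsf{F}(\mathcal{X})=0$, this puts us in the setting of Theorem~\ref{thm4}, which yields
\[
\prescript{\perp}{}{\mathsf{U}^{-1}(\mathcal{Y})}=\Delta(\mathcal{X})=\mathsf{T}(\mathcal{X}),
\]
so in particular $(\mathsf{T}(\mathcal{X}),\mathsf{U}^{-1}(\mathcal{Y}))$ coincides with the cotorsion pair $(\prescript{\perp}{}{\mathsf{U}^{-1}(\mathcal{Y})},\mathsf{U}^{-1}(\mathcal{Y}))$ produced by Theorem~\ref{thm1}.

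Next I would invoke Theorem~\ref{thm5}: under exactly the hypotheses we are given (namely $(\mathcal{X},\mathcal{Y})$ hereditary and complete, $\mathbb{L}_1\mathsf{F}(\mathcal{X})=0$, and $\mathsf{F}(\mathcal{X})\subseteq\mathcal{Y}$), the cotorsion pair $(\prescript{\perp}{}{\mathsf{U}^{-1}(\mathcal{Y})},\mathsf{U}^{-1}(\mathcal{Y}))$ is itself hereditary and complete. Transporting these properties across the identification from the previous step, we conclude that $(\mathsf{T}(\mathcal{X}),\mathsf{U}^{-1}(\mathcal{Y}))$ is a hereditary complete cotorsion pair, which is the desired conclusion.

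In this plan there is no real obstacle; the corollary is essentially a packaging result. The only point that warrants a sentence of justification is the automatic Ext-vanishing observation above, because without it one cannot unlock Theorem~\ref{thm4}. Everything else is a direct quotation of Theorems~\ref{thm4} and~\ref{thm5}.
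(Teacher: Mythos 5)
Your proposal matches the paper's own proof exactly: both observe that $\mathsf{F}(\mathcal{X})\subseteq\mathcal{Y}$ forces $\mathrm{Ext}_\mathcal{B}^1(X,\mathsf{F}(X))=0$ for $X\in\mathcal{X}$, then use Theorem~\ref{thm4} to identify $\prescript{\perp}{}{\mathsf{U}^{-1}(\mathcal{Y})}=\Delta(\mathcal{X})=\mathsf{T}(\mathcal{X})$, and conclude via Theorem~\ref{thm5}. The argument is correct.
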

\begin{proof}
	Since $\mathsf{F}(\mathcal{X})\subseteq\mathcal{Y}$, we have $\mathrm{Ext}_\mathcal{B}^1(X,\mathsf{F}(X))=0$ for all $X\in\mathcal{X}$. By Theorem~\ref{thm4}, we have 
	$$\prescript{\perp}{}{\mathsf{U}^{-1}(\mathcal{Y})}=\Delta(\mathcal{X})=\mathsf{T}(\mathcal{X}).$$
	By Theorem~\ref{thm5},  $(\mathsf{T}(\mathcal{X}),\mathsf{U}^{-1}(\mathcal{Y}))$ is a hereditary complete cotorsion pair.
\end{proof}

When the abelian category $\mathcal{B}$ is Frobenius, its trivial extension also exhibits good properties. Also, applying Theorems~\ref{thm1} and ~\ref{thm2} to the cotorsion pair $(\mathcal{B},\;\prescript{}{\mathcal{B}}{\mathcal{I}})$, we have the following result.

\begin{thm}\label{thm6}
	Let $\mathcal{B}\ltimes\mathsf{F}$ be a right trivial extension of an abelian category $\mathcal{B}$ with $\mathsf{F}^2=0$.  Assume that $\mathsf{F}$ admits a right adjoint functor $\mathsf{G}$, that $\mathsf{F}$ is exact and that $\mathsf{F}$ preserves projective objects. If $\mathcal{B}$ is a Frobenius abelian category, then
	\begin{enumerate}[(1)]
		\item The category $\mathcal{B}\ltimes\mathsf{F}$ is a Gorenstein abelian category with $\mathrm{FPD}(\mathcal{B}\ltimes\mathsf{F})=\mathrm{FID}(\mathcal{B}\ltimes\mathsf{F})\le 1$, and $$\prescript{}{\mathcal{B}\ltimes\mathsf{F}}{\mathcal{P}}^{<\infty}=\prescript{}{\mathcal{B}\ltimes\mathsf{F}}{\mathcal{P}}^{\le 1}=\mathsf{U}^{-1}(\prescript{}{\mathcal{B}}{\mathcal{P}})=\mathsf{U}^{-1}(\prescript{}{\mathcal{B}}{\mathcal{I}})=\prescript{}{\mathcal{B}\ltimes\mathsf{F}}{\mathcal{I}}^{\le 1}=\prescript{}{\mathcal{B}\ltimes\mathsf{F}}{\mathcal{I}}^{<\infty}.$$
		\item $(\prescript{\perp}{}{\mathsf{U}^{-1}(\prescript{}{\mathcal{B}}{\mathcal{I}})},\mathsf{U}^{-1}(\prescript{}{\mathcal{B}}{\mathcal{I}}))=(\Delta(\mathcal{B}),\Delta(\mathcal{B})^\perp)=(\mathrm{GP}(\mathcal{B}\ltimes\mathsf{F}),\prescript{}{\mathcal{B}\ltimes\mathsf{F}}{\mathcal{P}}^{\le 1})$ are hereditary complete cotorsion pairs. So we have $\mathrm{GP}(\mathcal{B}\ltimes\mathsf{F})=\Delta(\mathcal{B})$.
	\end{enumerate}
\end{thm}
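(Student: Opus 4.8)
The proof splits according to the two assertions; the description of the projective and injective objects of $\mathcal{B}\ltimes\mathsf{F}$ and the behaviour of the functor $\mathsf{T}$ on resolutions carry most of the weight.

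\emph{Proof of (1).} By Proposition~\ref{pro1}~(3) (with $\eta=0$) the projective objects of $\mathcal{B}\ltimes\mathsf{F}$ are precisely the $\mathsf{T}(P)$ with $P\in\prescript{}{\mathcal{B}}{\mathcal{P}}$. Since $\mathsf{U}\mathsf{T}(P)=P\oplus\mathsf{F}(P)$ and $\mathsf{F}$ preserves projectives, $\mathsf{U}$ sends projectives to projectives; being exact (Proposition~\ref{pro1}~(1)) it therefore sends a finite projective resolution of $(X,f)$ to one of $X$, and because $\mathcal{B}$ is Frobenius an object of $\mathcal{B}$ of finite projective dimension is projective, so $\prescript{}{\mathcal{B}\ltimes\mathsf{F}}{\mathcal{P}}^{<\infty}\subseteq\mathsf{U}^{-1}(\prescript{}{\mathcal{B}}{\mathcal{P}})$. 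For the reverse inclusion I would record that for every $(X,f)$ the counit $(1,f)\colon\mathsf{T}(X)\to(X,f)$ is an epimorphism whose kernel has underlying object isomorphic to $\mathsf{F}(X)$, hence (applying $\mathsf{F}$ to it one gets $\mathsf{F}^2(X)=0$) equals $\mathsf{Z}(\mathsf{F}(X))=\mathsf{T}(\mathsf{F}(X))$, giving a short exact sequence
\[0\longrightarrow\mathsf{Z}(\mathsf{F}(X))\longrightarrow\mathsf{T}(X)\xrightarrow{(1,f)}(X,f)\longrightarrow0.\]
When $X\in\prescript{}{\mathcal{B}}{\mathcal{P}}$ both $\mathsf{T}(X)$ and $\mathsf{T}(\mathsf{F}(X))$ are projective, so this is a projective resolution of length $\le1$ and $\mathsf{U}^{-1}(\prescript{}{\mathcal{B}}{\mathcal{P}})\subseteq\prescript{}{\mathcal{B}\ltimes\mathsf{F}}{\mathcal{P}}^{\le1}$. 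Together with $\prescript{}{\mathcal{B}}{\mathcal{P}}=\prescript{}{\mathcal{B}}{\mathcal{I}}$ this yields $\prescript{}{\mathcal{B}\ltimes\mathsf{F}}{\mathcal{P}}^{<\infty}=\prescript{}{\mathcal{B}\ltimes\mathsf{F}}{\mathcal{P}}^{\le1}=\mathsf{U}^{-1}(\prescript{}{\mathcal{B}}{\mathcal{P}})=\mathsf{U}^{-1}(\prescript{}{\mathcal{B}}{\mathcal{I}})$. The injective side I would handle by dualising through Proposition~\ref{pro3}: since $\mathsf{F}\dashv\mathsf{G}$ and $\eta=0$ one gets $\zeta=0$ and an isomorphism of categories $\mathsf{\Phi}\colon\mathcal{B}\ltimes\mathsf{F}\xrightarrow{\ \sim\ }\mathsf{G}\rtimes\mathcal{B}$ compatible with $\mathsf{U}$; moreover $\mathsf{G}^2=0$ (deduced from $\mathsf{F}^2=0$ using the adjunction isomorphisms) and $\mathsf{G}$ preserves injectives (right adjoint of an exact functor), so the argument above, dualised via Proposition~\ref{pro2}, gives $\prescript{}{\mathcal{B}\ltimes\mathsf{F}}{\mathcal{I}}^{<\infty}=\prescript{}{\mathcal{B}\ltimes\mathsf{F}}{\mathcal{I}}^{\le1}=\mathsf{U}^{-1}(\prescript{}{\mathcal{B}}{\mathcal{I}})$. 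Finally every injective object of $\mathcal{B}\ltimes\mathsf{F}$ lies in $\prescript{}{\mathcal{B}\ltimes\mathsf{F}}{\mathcal{I}}^{<\infty}=\prescript{}{\mathcal{B}\ltimes\mathsf{F}}{\mathcal{P}}^{\le1}$, so $\mathrm{spli}(\mathcal{B}\ltimes\mathsf{F})\le1$, and dually $\mathrm{silp}(\mathcal{B}\ltimes\mathsf{F})\le1$; hence $\mathcal{B}\ltimes\mathsf{F}$ is Gorenstein and $\mathrm{FPD}(\mathcal{B}\ltimes\mathsf{F})=\mathrm{FID}(\mathcal{B}\ltimes\mathsf{F})\le1$ by Theorem~\ref{Gor}~(2).

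\emph{Proof of (2), reduction.} The pair $(\mathcal{B},\prescript{}{\mathcal{B}}{\mathcal{I}})$ is a hereditary cotorsion pair in $\mathcal{B}$, and $\mathbb{L}_1\mathsf{F}(\mathcal{B})=0$ because $\mathsf{F}$ is exact, so Theorems~\ref{thm1} and~\ref{thm2} produce hereditary cotorsion pairs $(\prescript{\perp}{}{\mathsf{U}^{-1}(\prescript{}{\mathcal{B}}{\mathcal{I}})},\mathsf{U}^{-1}(\prescript{}{\mathcal{B}}{\mathcal{I}}))$ and $(\Delta(\mathcal{B}),\Delta(\mathcal{B})^\perp)$ in $\mathcal{B}\ltimes\mathsf{F}$. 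By (1) the Gorenstein category $\mathcal{B}\ltimes\mathsf{F}$ satisfies $\prescript{}{\mathcal{B}\ltimes\mathsf{F}}{\mathcal{P}}^{<\infty}=\mathsf{U}^{-1}(\prescript{}{\mathcal{B}}{\mathcal{I}})=\prescript{}{\mathcal{B}\ltimes\mathsf{F}}{\mathcal{P}}^{\le1}$, so Theorem~\ref{Gor}~(3) shows $(\mathrm{GP}(\mathcal{B}\ltimes\mathsf{F}),\prescript{}{\mathcal{B}\ltimes\mathsf{F}}{\mathcal{P}}^{\le1})$ is a hereditary complete cotorsion pair, and having the same right-hand class as the first pair it forces $\mathrm{GP}(\mathcal{B}\ltimes\mathsf{F})=\prescript{\perp}{}{\mathsf{U}^{-1}(\prescript{}{\mathcal{B}}{\mathcal{I}})}$. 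Proposition~\ref{pro3.10} (applied with $\mathcal{X}=\mathcal{B}$) gives $\prescript{\perp}{}{\mathsf{U}^{-1}(\prescript{}{\mathcal{B}}{\mathcal{I}})}\subseteq\Delta(\mathcal{B})$, so it remains to prove the reverse inclusion $\Delta(\mathcal{B})\subseteq\mathrm{GP}(\mathcal{B}\ltimes\mathsf{F})$; granting this, all three cotorsion pairs share the left-hand class $\Delta(\mathcal{B})=\mathrm{GP}(\mathcal{B}\ltimes\mathsf{F})$ and thus coincide, which is exactly the content of (2).

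\emph{The inclusion $\Delta(\mathcal{B})\subseteq\mathrm{GP}(\mathcal{B}\ltimes\mathsf{F})$, and the main obstacle.} I would first show $\mathsf{Z}(\mathsf{F}(X))=\mathsf{T}(\mathsf{F}(X))\in\mathrm{GP}(\mathcal{B}\ltimes\mathsf{F})$ for every $X\in\mathcal{B}$. Since $\mathcal{B}$ is Frobenius, $\mathsf{F}(X)$ is a cocycle of a totally acyclic complex $P^\bullet$ of projective $(=$ injective$)$ objects of $\mathcal{B}$; as $\mathbb{L}_1\mathsf{F}(\mathcal{B})=0$ the functor $\mathsf{T}$ is exact (Lemma~\ref{ext1}~(1)), so $\mathsf{T}(P^\bullet)$ is an acyclic complex of projective objects of $\mathcal{B}\ltimes\mathsf{F}$ with cocycle $\mathsf{T}(\mathsf{F}(X))=\mathsf{Z}(\mathsf{F}(X))$, and for any projective $\mathsf{T}(P')$ the adjunction $(\mathsf{T},\mathsf{U})$ identifies $\mathrm{Hom}_{\mathcal{B}\ltimes\mathsf{F}}(\mathsf{T}(P^\bullet),\mathsf{T}(P'))$ with $\mathrm{Hom}_\mathcal{B}(P^\bullet,P'\oplus\mathsf{F}(P'))$, which is acyclic because $P'\oplus\mathsf{F}(P')$ is projective-injective in $\mathcal{B}$; hence $\mathsf{T}(P^\bullet)$ is totally acyclic and $\mathsf{Z}(\mathsf{F}(X))$ is Gorenstein projective. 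Now let $(X,f)\in\Delta(\mathcal{B})$, i.e. $f$ is monic; the short exact sequence $0\to\mathsf{Z}(\mathsf{F}(X))\to\mathsf{T}(X)\to(X,f)\to0$ from (1) presents $(X,f)$ as the cokernel of a monomorphism from the Gorenstein projective object $\mathsf{Z}(\mathsf{F}(X))$ into the projective object $\mathsf{T}(X)$, and since $\mathrm{GP}(\mathcal{B}\ltimes\mathsf{F})$ is closed under extensions and direct summands (being the left-hand class of the cotorsion pair of Theorem~\ref{Gor}~(3)), a dual-Schanuel argument forces $(X,f)\in\mathrm{GP}(\mathcal{B}\ltimes\mathsf{F})$. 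The delicate point is precisely this inclusion: in contrast with Theorem~\ref{thm4} one cannot assume $\mathsf{F}(\mathcal{B})\subseteq\prescript{}{\mathcal{B}}{\mathcal{I}}$, so $\Delta(\mathcal{B})$ need not equal $\mathsf{T}(\mathcal{B})$ and the identification with the Gorenstein projective class must be made by genuinely producing totally acyclic complexes; the crucial input is that $\mathsf{T}$ carries a complete resolution in $\mathcal{B}$ to a totally acyclic complex in $\mathcal{B}\ltimes\mathsf{F}$, which is exactly where the hypotheses that $\mathsf{F}$ is exact and preserves projectives and that $\mathcal{B}$ is Frobenius are used together.
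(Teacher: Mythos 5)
Your argument for part (1) is correct and follows essentially the same route as the paper (the short exact sequence $0\to\mathsf{Z}(\mathsf{F}(X))\to\mathsf{T}(X)\to(X,f)\to0$ is Lemma~\ref{proj} in disguise, and you dualise through $\mathsf{G}\rtimes\mathcal{B}$ exactly as in the paper); the only small variation is that you deduce $\prescript{}{\mathcal{B}\ltimes\mathsf{F}}{\mathcal{P}}^{<\infty}\subseteq\mathsf{U}^{-1}(\prescript{}{\mathcal{B}}{\mathcal{P}})$ by noting $\mathsf{U}$ is exact and preserves projectives, whereas the paper splits a length-one resolution after applying $\mathsf{U}$; both are fine. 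Your reduction at the start of (2) is also correct, and Step 1 (that $\mathsf{Z}(\mathsf{F}(X))=\mathsf{T}(\mathsf{F}(X))$ is Gorenstein projective, via applying the exact functor $\mathsf{T}$ to a complete resolution of $\mathsf{F}(X)$ and using the $(\mathsf{T},\mathsf{U})$ adjunction to check total acyclicity) is a nice, valid argument.

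The gap is in the last step. You write that the exact sequence $0\to\mathsf{Z}(\mathsf{F}(X))\to\mathsf{T}(X)\to(X,f)\to0$ presents $(X,f)$ as the cokernel of a monomorphism from a Gorenstein projective object into ``the projective object $\mathsf{T}(X)$'', and invoke a dual-Schanuel argument. This fails for two reasons. First, $\mathsf{T}(X)$ is projective in $\mathcal{B}\ltimes\mathsf{F}$ only when $X$ itself is projective in $\mathcal{B}$ (Proposition~\ref{pro1}(3): $\mathsf{C}(\mathsf{T}(X))=X$), and $(X,f)\in\Delta(\mathcal{B})$ asserts nothing about the projectivity of $X$. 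Second, even if the middle term were projective, the underlying principle is false: the cokernel of a monomorphism from a Gorenstein projective object into a projective object need not be Gorenstein projective. (Over $\mathbb{Z}$, which is Gorenstein, $\mathrm{GP}=\prescript{}{\mathbb{Z}}{\mathcal{P}}$; yet the cokernel of $\mathbb{Z}\xrightarrow{\ 2\ }\mathbb{Z}$ is $\mathbb{Z}/2$, which is not Gorenstein projective.) What is true is closure of $\mathrm{GP}$ under kernels of epimorphisms and under cosyzygies \emph{taken inside a complete resolution}, neither of which applies here. The paper instead verifies the Ext-orthogonality directly: for $(X,f)\in\Delta(\mathcal{B})$ and $(P,g)$ with $P$ projective, it uses the genuine projective resolution $0\to(\mathsf{F}(P),0)\to\mathsf{T}(P)\to(P,g)\to0$ of Lemma~\ref{proj}, the isomorphism $\mathrm{Ext}^1_{\mathcal{B}\ltimes\mathsf{F}}((X,f),\mathsf{Z}(Y))\cong\mathrm{Ext}^1_{\mathcal{B}}(\mathrm{Coker}(f),Y)$ of Lemma~\ref{Ext1} (which gives $\mathrm{Ext}^1((X,f),\mathsf{T}(P))=0$), and the bound $\mathrm{FID}(\mathcal{B}\ltimes\mathsf{F})\le1$ from (1) to kill $\mathrm{Ext}^2((X,f),(\mathsf{F}(P),0))$; you should replace the dual-Schanuel step by this direct computation.
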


To prove Theorem~\ref{thm6}, we need the following lemmas.

\begin{lem}\label{proj}
	Let $\mathcal{B}\ltimes\mathsf{F}$ be a right trivial extension of an abelian category $\mathcal{B}$ with $\mathsf{F}^2=0$. Assume that $\mathsf{F}$ preserves projective objects. Let $(P,f)\in\mathcal{B}\ltimes\mathsf{F}$ with $P\in\prescript{}{\mathcal{B}}{\mathcal{P}}$. Then
	\begin{equation}\label{l1}0\longrightarrow(\mathsf{F}(P),0)\xlongrightarrow{\big(\begin{smallmatrix}
			-f\\ 1
		\end{smallmatrix}\big)}\mathsf{T}(P)\xlongrightarrow{(1,f)}(P,f)\longrightarrow0\end{equation}
	is a projective resolution of $(P,f)$. In particular, for any $(P,f)\in\mathcal{B}\ltimes\mathsf{F}$ with $P\in\prescript{}{\mathcal{B}}{\mathcal{P}}$, we have $\projdim(P,f)\le 1$.
\end{lem}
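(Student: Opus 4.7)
The plan is to verify the displayed sequence is a projective resolution in $\mathcal{B}\ltimes\mathsf{F}$ by checking three things in turn: that the arrows are well-defined morphisms in the extension category, that the underlying sequence in $\mathcal{B}$ is exact, and that the two leftmost objects are projective. All three steps use the hypothesis $\mathsf{F}^2=0$ crucially, which forces $\mathsf{F}^2(P)=0$, $\mathsf{F}(f)=0$, and, together with $\eta=0$, reduces $t_P$ to the map $\binom{0}{1}:\mathsf{F}(P)\to P\oplus \mathsf{F}(P)$.

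First I would check that $\binom{-f}{1}:(\mathsf{F}(P),0)\to \mathsf{T}(P)$ is a morphism in $\mathcal{B}\ltimes\mathsf{F}$ by verifying $\binom{-f}{1}\circ 0 = t_P\circ \mathsf{F}(\binom{-f}{1})$, which holds trivially since both sides factor through $\mathsf{F}^2(P)=0$. Similarly, $(1,f):\mathsf{T}(P)\to (P,f)$ is a morphism because $(1,f)\circ t_P=f$ matches $f\circ \mathsf{F}((1,f)) = f\circ (1_{\mathsf{F}(P)},\mathsf{F}(f)) = f$, the second equality using $\mathsf{F}(f)=0$.

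Next, exactness in $\mathcal{B}\ltimes\mathsf{F}$ reduces via Proposition~\ref{pro1}(1) to exactness in $\mathcal{B}$ of
\[
0\longrightarrow \mathsf{F}(P)\xrightarrow{\binom{-f}{1}} P\oplus \mathsf{F}(P)\xrightarrow{(1,f)} P\longrightarrow 0,
\]
which is immediate: the first map is a split monomorphism via its second component, the second map is a split epimorphism via its first component, and $(1,f)\binom{-f}{1}=0$ with matching image and kernel.

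Finally, for projectivity I would apply Proposition~\ref{pro1}(3) twice. The object $\mathsf{T}(P)$ is projective because $\mathrm{Coker}(t_P)\cong P$ is projective in $\mathcal{B}$. For $(\mathsf{F}(P),0)$, I would observe that under $\mathsf{F}^2=0$ one has $\mathsf{T}(\mathsf{F}(P)) = (\mathsf{F}(P)\oplus \mathsf{F}^2(P), t_{\mathsf{F}(P)}) = (\mathsf{F}(P),0)$, so this object coincides with $\mathsf{T}(\mathsf{F}(P))$; since $\mathsf{F}$ preserves projective objects, $\mathsf{F}(P)$ is projective in $\mathcal{B}$, and hence $(\mathsf{F}(P),0)=\mathsf{T}(\mathsf{F}(P))$ is projective in $\mathcal{B}\ltimes\mathsf{F}$ by the same proposition. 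The bound $\projdim(P,f)\le 1$ is then an immediate consequence of having the displayed length-one projective resolution. The main obstacle is just bookkeeping with the matrix formulas and being disciplined about invoking $\mathsf{F}^2=0$; there is no deeper idea beyond that.
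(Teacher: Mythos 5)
Your proof is correct and follows essentially the same route as the paper: verify the two outer terms are projective via Proposition~\ref{pro1}(3) together with the identification $(\mathsf{F}(P),0)\cong\mathsf{T}(\mathsf{F}(P))$ under $\mathsf{F}^2=0$, check the maps are morphisms in $\mathcal{B}\ltimes\mathsf{F}$, and reduce exactness to the underlying (split) short exact sequence in $\mathcal{B}$ via Proposition~\ref{pro1}(1). The only difference is that you spell out the well-definedness computations slightly more explicitly; nothing substantive diverges.
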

\begin{proof}
	We claim that $(\mathsf{F}(P),0)$ is a projective object of $\mathcal{B}\ltimes\mathsf{F}$. Since $\mathsf{F}$  preserves projective objects, $\mathsf{F}(P)$ is projective. Since $\mathsf{F}^2=0$, it follows from Proposition~\ref{pro1}~(3) that $(\mathsf{F}(P),0)\cong\mathsf{T}(\mathsf{F}(P))$ is projective. By Proposition~\ref{pro1}~(3), we obtain that $\mathsf{T}(P)\in\mathcal{B}\ltimes\mathsf{F}$ is projective.
	
	Since $\mathsf{F}^2=0$, $\big(\begin{smallmatrix}
		-f\\ 1
	\end{smallmatrix}\big)$ and $(1,f)$ are morphisms in $\mathcal{B}\ltimes\mathsf{F}$. Since
	\[0\longrightarrow\mathsf{F}(P)\xlongrightarrow{\big(\begin{smallmatrix}
			-f\\ 1
		\end{smallmatrix}\big)}P\oplus\mathsf{F}(P)\xlongrightarrow{(1,f)}P\longrightarrow0\]is exact in $\mathcal{B}$, the sequence (\ref{l1}) is exact.
\end{proof}

The following lemma is the dual of Lemma~\ref{proj}.

\begin{lem}\label{inj}
	Let $\mathsf{G}\rtimes\mathcal{B}$ be a left trivial extension of an abelian category $\mathcal{B}$ with $\mathsf{G}^2=0$. Assume that $\mathsf{G}$ preserves injective objects. Let $[I,g]\in\mathsf{G}\rtimes\mathcal{B}$ with $I\in\prescript{}{\mathcal{B}}{\mathcal{I}}$. Then
	\begin{equation}\label{l2}0\longrightarrow[I,g]\xlongrightarrow{\big(\begin{smallmatrix}
			g\\1
		\end{smallmatrix}\big)}\mathsf{H}(I)\xlongrightarrow{(1,-g)}[\mathsf{G}(I),0]\longrightarrow0\end{equation}
	is an injective resolution of $[I,g]$. In particular, for any $[I,g]\in\mathsf{G}\rtimes\mathcal{B}$ with $I\in\prescript{}{\mathcal{B}}{\mathcal{I}}$, we have $\injdim [I,g]\le 1$.
\end{lem}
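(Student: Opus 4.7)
The plan is to dualize the proof of Lemma~\ref{proj} step by step, replacing $\mathsf{T}$ with $\mathsf{H}$, projective objects with injective ones, and kernels with cokernels throughout. Since the statement is the exact dual, I expect no substantive obstacle, only careful bookkeeping with the $\mathsf{G}$-action on matrix-presented morphisms.

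First I would verify that both outer terms of the displayed sequence are injective objects of $\mathsf{G}\rtimes\mathcal{B}$. For $\mathsf{H}(I)$, this is immediate from Proposition~\ref{pro2}(3), since $I$ is injective in $\mathcal{B}$. For $[\mathsf{G}(I),0]$, the hypothesis that $\mathsf{G}$ preserves injectives yields that $\mathsf{G}(I)$ is injective in $\mathcal{B}$, and then $\mathsf{G}^2=0$ together with Proposition~\ref{pro2}(3) gives $[\mathsf{G}(I),0]\cong\mathsf{H}(\mathsf{G}(I))$, hence injective.

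Next I would check that $\big(\begin{smallmatrix}g\\1\end{smallmatrix}\big)$ and $(1,-g)$ are legitimate morphisms in $\mathsf{G}\rtimes\mathcal{B}$, i.e.\ that they satisfy the compatibility relation $\mathsf{G}(\alpha)\circ f=g\circ\alpha$ from Subsection~\ref{subsec2.2}. Using the explicit form $s_I=\big(\begin{smallmatrix}0&0\\1&0\end{smallmatrix}\big)$ (since $\zeta=0$), this reduces to the identity $\mathsf{G}(g)\circ g = 0$ for the first morphism, which holds because $[I,g]$ is an object of $\mathsf{G}\rtimes\mathcal{B}$ with $\zeta=0$; for the second morphism the condition is automatic since the codomain involves $\mathsf{G}^2(I)=0$. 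This is the step that requires the cleanest calculation, and is where all three hypotheses $\mathsf{G}^2=0$, $[I,g]\in\mathsf{G}\rtimes\mathcal{B}$, and $\zeta=0$ combine.

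Finally, by Proposition~\ref{pro2}(1) the displayed sequence is exact in $\mathsf{G}\rtimes\mathcal{B}$ if and only if its underlying sequence in $\mathcal{B}$,
\[0\longrightarrow I\xlongrightarrow{\big(\begin{smallmatrix}g\\1\end{smallmatrix}\big)}\mathsf{G}(I)\oplus I\xlongrightarrow{(1,-g)}\mathsf{G}(I)\longrightarrow0,\]
is exact. This is straightforward: the left map is monic via its identity component, $(1,-g)\circ\big(\begin{smallmatrix}g\\1\end{smallmatrix}\big)=g-g=0$, the right map is epic (even split by the inclusion), and a direct chase shows the kernel of $(1,-g)$ coincides with the image of $\big(\begin{smallmatrix}g\\1\end{smallmatrix}\big)$. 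The bound $\injdim[I,g]\le 1$ is then immediate from the existence of a length-one injective resolution.
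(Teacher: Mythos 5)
Your proposal is correct and mirrors the paper's proof: verify that $\mathsf{H}(I)$ and $[\mathsf{G}(I),0]\cong\mathsf{H}(\mathsf{G}(I))$ are injective via Proposition~\ref{pro2}(3), check that the two matrix maps satisfy the morphism condition in $\mathsf{G}\rtimes\mathcal{B}$ using $\mathsf{G}^2=0$, and conclude by exactness of the split underlying sequence in $\mathcal{B}$ via Proposition~\ref{pro2}(1). The only cosmetic difference is that you spell out the morphism-condition verification in coordinates, which the paper leaves as an immediate consequence of $\mathsf{G}^2=0$.
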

\begin{proof}
	We claim that $[\mathsf{G}(I),0]$ is an injective object of $\mathsf{G}\rtimes\mathcal{B}$. Since $\mathsf{G}$ preserves injective objects, $\mathsf{G}(I)$ is injective. Since $\mathsf{G}^2=0$, it follows from Proposition~\ref{pro2}~(3) that $[\mathsf{G}(I),0]$ is injective. By Proposition~\ref{pro2}~(3), we obtain that $\mathsf{H}(I)\in\mathsf{G}\rtimes\mathcal{B}$ is injective.
	
	Since $\mathsf{G}^2=0$, $\big(\begin{smallmatrix}
		g\\ 1
	\end{smallmatrix}\big)$ and $(1,-g)$ are morphisms in $\mathsf{G}\rtimes\mathcal{B}$. Since
	\[0\longrightarrow I\xlongrightarrow{\big(\begin{smallmatrix}
			g\\ 1
		\end{smallmatrix}\big)}\mathsf{G}(I)\oplus I\xlongrightarrow{(1,-g)}\mathsf{G}(I)\longrightarrow0\]is exact in $\mathcal{B}$, the sequence (\ref{l2}) is exact.
\end{proof}

\begin{proof}[Proof of Theorem~\ref{thm6}]
	(1)	By Proposition~\ref{pro3}~(4), we have $\mathcal{B}\ltimes\mathsf{F}\cong\mathsf{G}\rtimes\mathcal{B}$. Since $\mathsf{F}$ is exact and preserves projective objects, $\mathsf{G}$ preserves injective objects. $\mathsf{F}^2=0$ implies $\mathsf{G}^2=0$.
	
	Let $(X,f)\in\prescript{}{\mathcal{B}\ltimes\mathsf{F}}{\mathcal{P}}$. By Proposition~\ref{pro1}~(3), we get $(X,f)\cong\mathsf{T}(P)$ with $P\in\prescript{}{\mathcal{B}}{P}$. Since $\mathsf{F}$ preserves projective objects and $\mathcal{B}$ is a Frobenius abelian category, $P\oplus\mathsf{F}(P)$ is injective. By Lemma~\ref{inj}, we have $\injdim \mathsf{T}(P)\le 1$.
	
 Let $[Y,g]\in\prescript{}{\mathsf{G}\rtimes\mathcal{B}}{\mathcal{I}}$. By Proposition~\ref{pro2}~(3), we get $[Y,g]\cong\mathsf{H}(I)$ with $I\in \prescript{}{\mathcal{B}}{I}$. Since $\mathsf{G}$ preserves injective objects and $\mathcal{B}$ is a Frobenius abelian category, $\mathsf{G}(I)\oplus I$ is projective. By Lemma~\ref{proj}, we have $\projdim \mathsf{H}(I)\le 1$. Therefore, we have $\mathrm{spli}(\mathcal{B}\ltimes\mathsf{F})\le 1$ and $\mathrm{silp}(\mathcal{B}\ltimes\mathsf{F})\le 1.$ Hence, $\mathcal{B}\ltimes\mathsf{F}$ is Gorenstein. By Theorem~\ref{Gor}~(1) and (2), we get $\prescript{}{\mathcal{B}\ltimes\mathsf{F}}{\mathcal{P}}^{<\infty}=\prescript{}{\mathcal{B}\ltimes\mathsf{F}}{\mathcal{P}}^{\le 1}=\prescript{}{\mathcal{B}\ltimes\mathsf{F}}{\mathcal{I}}^{\le 1}=\prescript{}{\mathcal{B}\ltimes\mathsf{F}}{\mathcal{I}}^{<\infty}$.
	
	 Since $\mathsf{F}$ preserves projective objects, it follows from Lemma~\ref{proj} that $\mathsf{U}^{-1}(\prescript{}{\mathcal{B}}{\mathcal{P}})\subseteq\prescript{}{\mathcal{B}\ltimes\mathsf{F}}{\mathcal{P}}^{\le 1}$. On the other hand, let $(X,f)\in\prescript{}{\mathcal{B}\ltimes\mathsf{F}}{\mathcal{P}}^{\le 1} $ and 
	\[0\longrightarrow\mathsf{T}(P_1)\longrightarrow\mathsf{T}(P_2)\longrightarrow (X,f)\longrightarrow0\]
	be a projective resolution of $(X,f)$. Here, $P_1,P_2\in\prescript{}{\mathcal{B}}{\mathcal{P}}$. Then we have an exact sequence
	\begin{equation}\label{l3}0\longrightarrow P_2\oplus\mathsf{F}(P_2)\longrightarrow P_1\oplus\mathsf{F}(P_1)\longrightarrow X\longrightarrow0.\end{equation}
	Since $\mathcal{B}$ is a Frobenius abelian category and $\mathsf{F}$ preserves projective objects, $P_2\oplus\mathsf{F}(P_2)$ is injective. Thus (\ref{l3}) splits and hence $X$ is projective. Therefore, $(X,f)\in\mathsf{U}^{-1}(\prescript{}{\mathcal{B}}{\mathcal{P}})$. Hence $\prescript{}{\mathcal{B}\ltimes\mathsf{F}}{\mathcal{P}}^{\le 1}=\mathsf{U}^{-1}(\prescript{}{\mathcal{B}}{\mathcal{P}})=\mathsf{U}^{-1}(\prescript{}{\mathcal{B}}{\mathcal{I}})$.
	
	Similarly, we have $\prescript{}{\mathcal{B}\ltimes\mathsf{F}}{\mathcal{I}}^{\le 1}=\mathsf{U}^{-1}(\prescript{}{\mathcal{B}}{\mathcal{I}})$. Hence, we have 
	$$\prescript{}{\mathcal{B}\ltimes\mathsf{F}}{\mathcal{P}}^{<\infty}=\mathsf{U}^{-1}(\prescript{}{\mathcal{B}}{\mathcal{P}})=\prescript{}{\mathcal{B}\ltimes\mathsf{F}}{\mathcal{P}}^{\le 1}=\mathsf{U}^{-1}(\prescript{}{\mathcal{B}}{\mathcal{I}})=\prescript{}{\mathcal{B}\ltimes\mathsf{F}}{\mathcal{I}}^{\le 1}=\prescript{}{\mathcal{B}\ltimes\mathsf{F}}{\mathcal{I}}^{<\infty}.$$
	
	(2) Since $\mathcal{B}\ltimes\mathsf{F}$ is Gorenstein and $\prescript{}{\mathcal{B}\ltimes\mathsf{F}}{\mathcal{P}}^{<\infty}=\mathsf{U}^{-1}(\prescript{}{\mathcal{B}}{\mathcal{I}})$, it follows from Theorem~\ref{Gor}~(3) that $\prescript{\perp}{}{\mathsf{U}^{-1}(\prescript{}{\mathcal{B}}{\mathcal{I}})}=\mathrm{GP}(\mathcal{B}\ltimes\mathsf{F})$. Therefore, $(\prescript{\perp}{}{\mathsf{U}^{-1}(\prescript{}{\mathcal{B}}{\mathcal{I}})},\mathsf{U}^{-1}(\prescript{}{\mathcal{B}}{\mathcal{I}}))=(\mathrm{GP}(\mathcal{B}\ltimes\mathsf{F}),\prescript{}{\mathcal{B}\ltimes\mathsf{F}}{\mathcal{P}}^{\le 1})$.
	
	Since $\mathsf{F}$ is exact, applying Proposition~\ref{pro3.10} to the cotorsion pair $(\mathcal{B},\prescript{}{\mathcal{B}}{\mathcal{I})}$, we get 
		$$
		\Delta(\mathcal{B})^\perp\subseteq \mathsf{U}^{-1}(\prescript{}{\mathcal{B}}{\mathcal{I}})=\prescript{}{\mathcal{B}\ltimes\mathsf{F}}{\mathcal{P}}^{\le 1}.
		$$ Thus, to prove $(\Delta(\mathcal{B}),\Delta(\mathcal{B})^\perp)=(\mathrm{GP}(\mathcal{B}\ltimes\mathsf{F}),\prescript{}{\mathcal{B}\ltimes\mathsf{F}}{\mathcal{P}}^{\le 1})$, it suffices to show $\Delta(\mathcal{B})\subseteq\mathrm{GP}(\mathcal{B}\ltimes\mathsf{F})=\prescript{\perp}{}{(\prescript{}{\mathcal{B}\ltimes\mathsf{F}}{\mathcal{P}}^{\le 1})}$.
		
		\textbf{Step 1.} We prove that $\Delta(\mathcal{B})\subseteq\prescript{\perp}{}{\mathsf{T}(\prescript{}{\mathcal{B}}{\mathcal{P}})}$. Let $(X,f)\in\Delta(\mathcal{B})$ and $P\in\prescript{}{\mathcal{B}}{\mathcal{P}}$. By Lemma~\ref{proj}, there is an exact sequence
		\[0\longrightarrow(\mathsf{F}(P),0)\xlongrightarrow{\big(\begin{smallmatrix}
				0\\ 1
			\end{smallmatrix}\big)}\mathsf{T}(P)\xlongrightarrow{(1,0)}(P,0)\longrightarrow0.\]
		By Lemma~\ref{Ext1}, we have
		\[\mathrm{Ext}_{\mathcal{B}\ltimes\mathsf{F}}^1((X,f),(P,0))\cong\mathrm{Ext}_\mathcal{B}^1(\mathrm{Coker}(f),P)=0\] and \[\mathrm{Ext}_{\mathcal{B}\ltimes\mathsf{F}}^1((X,f),(\mathsf{F}(P),0))\cong\mathrm{Ext}_\mathcal{B}^1(\mathrm{Coker}(f),\mathsf{F}(P))=0.\]
		Thus, we get $\mathrm{Ext}_{\mathcal{B}\ltimes\mathsf{F}}^1((X,f),\mathsf{T}(P))=0$. Hence, $\Delta(\mathcal{B})\subseteq\prescript{\perp}{}{\mathsf{T}(\prescript{}{\mathcal{B}}{\mathcal{P}})}$.
		
		\textbf{Step 2.} We prove that $\Delta(\mathcal{B})\subseteq\prescript{\perp}{}{(\prescript{}{\mathcal{B}\ltimes\mathsf{F}}{\mathcal{P}}^{\le 1})}.$  Let $(X,f)\in\Delta(\mathcal{B})$ and $(P,g)\in\mathsf{U}^{-1}(\prescript{}{\mathcal{B}}{\mathcal{P}})$. By Lemma~\ref{proj}, there is an exact sequence
		\begin{equation}\label{s0}0\longrightarrow(\mathsf{F}(P),0)\xlongrightarrow{\big(\begin{smallmatrix}
				-g\\ 1
			\end{smallmatrix}\big)}\mathsf{T}(P)\xlongrightarrow{(1,g)}(P,g)\longrightarrow0.\end{equation}
		Then there is an exact sequence
		\[0=\mathrm{Ext}_{\mathcal{B}\ltimes\mathsf{F}}^1((X,f),\mathsf{T}(P))\rightarrow\mathrm{Ext}_{\mathcal{B}\ltimes\mathsf{F}}^1((X,f),(P,g))\rightarrow\mathrm{Ext}_{\mathcal{B}\ltimes\mathsf{F}}^2((X,f),(\mathsf{F}(P),0)).\]
		Since $\projdim(\mathsf{F}(P),0)\le 1$ and $\mathrm{FID}(\mathcal{B}\ltimes\mathsf{F})\le 1$, we have $\mathrm{Ext}_{\mathcal{B}\ltimes\mathsf{F}}^2((X,f),(\mathsf{F}(P),0))=0.$ Therefore, $\mathrm{Ext}_{\mathcal{B}\ltimes\mathsf{F}}^1((X,f),(P,g))=0$. Hence, $\Delta(\mathcal{B})\subseteq\prescript{\perp}{}{(\prescript{}{\mathcal{B}\ltimes\mathsf{F}}{\mathcal{P}}^{\le 1})}.$  Then we are done.
\end{proof}

\subsection{Model structures in extensions of abelian categories}

In this subsection, we will  construct the Hovey triples and the $\omega$-model structures in the trivial extension categories. For Hovey triples and  $\omega$-model structures, we refer to Subsections~\ref{subsec2.6} and ~\ref{subsec2.7}.

\begin{thm}\label{thm9}
	Let $\mathcal{B}\ltimes\mathsf{F}$ be a right trivial extension of an abelian category $\mathcal{B}$ with $\mathsf{F}^2=0$. Assume that $\mathcal{M}_\mathcal{B}=(\mathcal{C,F,W})$ is a hereditary Hovey triple in $\mathcal{B}$. If $\mathbb{L}_1\mathsf{F}(\mathcal{C})=0$ and $\mathsf{F}(\mathcal{C})\subseteq\mathcal{F}\cap\mathcal{W}$, then 
	\begin{equation*}\label{triple1}
		\mathcal{M}_{\mathcal{B}\ltimes\mathsf{F}}=	\big(\mathsf{T}(\mathcal{C}),\;\mathsf{U}^{-1}(\mathcal{F}),\;\mathsf{U}^{-1}(\mathcal{W})\big)
	\end{equation*}
	is a hereditary Hovey triple in $\mathcal{B}\ltimes\mathsf{F}$ and there is a triangle equivalence $\mathrm{Ho}(\mathcal{M}_{\mathcal{B}\ltimes\mathsf{F}})\simeq\mathrm{Ho}(\mathcal{M}_\mathcal{B})$. 
\end{thm}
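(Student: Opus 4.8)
The plan is to verify the two defining conditions of a hereditary Hovey triple for the candidate triple $\bigl(\mathsf{T}(\mathcal{C}),\;\mathsf{U}^{-1}(\mathcal{F}),\;\mathsf{U}^{-1}(\mathcal{W})\bigr)$ by reducing each to the corresponding property of $\mathcal{M}_\mathcal{B}=(\mathcal{C,F,W})$, and then identify the homotopy category using Theorem~\ref{te}. The first condition requires that both $\bigl(\mathsf{T}(\mathcal{C})\cap\mathsf{U}^{-1}(\mathcal{W}),\;\mathsf{U}^{-1}(\mathcal{F})\bigr)$ and $\bigl(\mathsf{T}(\mathcal{C}),\;\mathsf{U}^{-1}(\mathcal{F})\cap\mathsf{U}^{-1}(\mathcal{W})\bigr)$ be hereditary complete cotorsion pairs. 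Observe that $\mathsf{U}^{-1}(\mathcal{F})\cap\mathsf{U}^{-1}(\mathcal{W})=\mathsf{U}^{-1}(\mathcal{F}\cap\mathcal{W})$, and since $(\mathcal{C},\mathcal{F}\cap\mathcal{W})$ is a hereditary complete cotorsion pair in $\mathcal{B}$ with $\mathsf{F}(\mathcal{C})\subseteq\mathcal{F}\cap\mathcal{W}$ and $\mathbb{L}_1\mathsf{F}(\mathcal{C})=0$, Corollary~\ref{cor1} (applied to the pair $(\mathcal{C},\mathcal{F}\cap\mathcal{W})$, noting $\mathsf{F}^2=0$) gives directly that $\bigl(\mathsf{T}(\mathcal{C}),\mathsf{U}^{-1}(\mathcal{F}\cap\mathcal{W})\bigr)$ is a hereditary complete cotorsion pair. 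That disposes of the second of the two cotorsion pairs.

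For the first cotorsion pair $\bigl(\mathsf{T}(\mathcal{C})\cap\mathsf{U}^{-1}(\mathcal{W}),\;\mathsf{U}^{-1}(\mathcal{F})\bigr)$, I would first check that $\mathsf{T}(\mathcal{C})\cap\mathsf{U}^{-1}(\mathcal{W})=\mathsf{T}(\mathcal{C}\cap\mathcal{W})$. One inclusion is clear since $\mathsf{UT}(C)=C\oplus\mathsf{F}(C)$ and $\mathsf{F}(C)\in\mathcal{F}\cap\mathcal{W}\subseteq\mathcal{W}$, so for $C\in\mathcal{C}$ one has $\mathsf{T}(C)\in\mathsf{U}^{-1}(\mathcal{W})$ if and only if $C\oplus\mathsf{F}(C)\in\mathcal{W}$, which by thickness of $\mathcal{W}$ is equivalent to $C\in\mathcal{W}$; combined with the observation that $\mathsf{T}(C)\cong\mathsf{T}(C')$ forces $C\cong\mathsf{CT}(C)\cong\mathsf{CT}(C')\cong C'$, this yields the claimed equality. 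Now $(\mathcal{C}\cap\mathcal{W},\mathcal{F})$ is a hereditary complete cotorsion pair in $\mathcal{B}$, again with $\mathbb{L}_1\mathsf{F}(\mathcal{C}\cap\mathcal{W})\subseteq\mathbb{L}_1\mathsf{F}(\mathcal{C})=0$ and $\mathsf{F}(\mathcal{C}\cap\mathcal{W})\subseteq\mathsf{F}(\mathcal{C})\subseteq\mathcal{F}\cap\mathcal{W}\subseteq\mathcal{F}$, so Corollary~\ref{cor1} applies once more to give that $\bigl(\mathsf{T}(\mathcal{C}\cap\mathcal{W}),\mathsf{U}^{-1}(\mathcal{F})\bigr)$ is a hereditary complete cotorsion pair. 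Thus condition (1) is established.

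For condition (2), thickness of $\mathsf{U}^{-1}(\mathcal{W})$, I would use exactness of $\mathsf{U}$ (Proposition~\ref{pro1}~(1)): a short exact sequence in $\mathcal{B}\ltimes\mathsf{F}$ maps under $\mathsf{U}$ to a short exact sequence in $\mathcal{B}$, and $(X,f)\in\mathsf{U}^{-1}(\mathcal{W})$ precisely when $X=\mathsf{U}(X,f)\in\mathcal{W}$; so the two-out-of-three and direct-summand closure of $\mathsf{U}^{-1}(\mathcal{W})$ follow immediately from those of $\mathcal{W}$. This step is routine. Finally, for the triangle equivalence, by Theorem~\ref{te}~(2) we have $\mathrm{Ho}(\mathcal{M}_{\mathcal{B}\ltimes\mathsf{F}})\simeq(\mathsf{T}(\mathcal{C})\cap\mathsf{U}^{-1}(\mathcal{F}))/(\mathsf{T}(\mathcal{C})\cap\mathsf{U}^{-1}(\mathcal{F})\cap\mathsf{U}^{-1}(\mathcal{W}))$, and similarly $\mathrm{Ho}(\mathcal{M}_\mathcal{B})\simeq(\mathcal{C}\cap\mathcal{F})/(\mathcal{C}\cap\mathcal{F}\cap\mathcal{W})$. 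The plan is to show the functor $\mathsf{C}$ (or rather its restriction, with $\mathsf{T}$ as a pseudo-inverse) induces an equivalence between these stable categories: one checks that $(X,f)\in\mathsf{T}(\mathcal{C})\cap\mathsf{U}^{-1}(\mathcal{F})$ forces $(X,f)\cong\mathsf{T}(C)$ with $C=\mathsf{C}(X,f)$, and that $C\in\mathcal{C}$ while $\mathsf{UT}(C)=C\oplus\mathsf{F}(C)\in\mathcal{F}$ (using $\mathsf{F}(\mathcal{C})\subseteq\mathcal{F}$) forces $C\in\mathcal{F}$; conversely $\mathsf{T}$ sends $\mathcal{C}\cap\mathcal{F}$ into $\mathsf{T}(\mathcal{C})\cap\mathsf{U}^{-1}(\mathcal{F})$. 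Under this identification the ideal $\mathsf{T}(\mathcal{C})\cap\mathsf{U}^{-1}(\mathcal{F})\cap\mathsf{U}^{-1}(\mathcal{W})$ corresponds to $\mathsf{T}(\mathcal{C}\cap\mathcal{F}\cap\mathcal{W})$, so $\mathsf{C}$ and $\mathsf{T}$ descend to mutually inverse triangle equivalences on the stable categories.

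\textbf{Main obstacle.} The genuine content lies in the reduction establishing condition~(1), and specifically in the identity $\mathsf{T}(\mathcal{C})\cap\mathsf{U}^{-1}(\mathcal{W})=\mathsf{T}(\mathcal{C}\cap\mathcal{W})$ and its analogue for $\mathcal{F}\cap\mathcal{W}$; everything else is either a direct invocation of Corollary~\ref{cor1} or a formal exactness argument. I expect the most delicate point to be ensuring that the hypotheses needed for Corollary~\ref{cor1} genuinely transfer to the sub-cotorsion-pairs $(\mathcal{C}\cap\mathcal{W},\mathcal{F})$ and $(\mathcal{C},\mathcal{F}\cap\mathcal{W})$ — in particular that $\mathsf{F}(\mathcal{C})\subseteq\mathcal{F}\cap\mathcal{W}$ supplies simultaneously the condition $\mathsf{F}(\mathcal{C})\subseteq\mathcal{F}$ (needed for the pair $(\mathcal{C}\cap\mathcal{W},\mathcal{F})$) and the condition $\mathsf{F}(\mathcal{C})\subseteq\mathcal{F}\cap\mathcal{W}$ (needed for the pair $(\mathcal{C},\mathcal{F}\cap\mathcal{W})$) — together with a careful check that in the first case one applies Corollary~\ref{cor1} with $\mathcal{X}=\mathcal{C}\cap\mathcal{W}$ and $\mathcal{Y}=\mathcal{F}$, for which one needs $\mathsf{F}(\mathcal{C}\cap\mathcal{W})\subseteq\mathcal{F}$, which indeed holds since it is contained in $\mathsf{F}(\mathcal{C})\subseteq\mathcal{F}$.
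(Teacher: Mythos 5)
Your proposal is correct and follows essentially the same route as the paper's own argument: reduce both cotorsion pairs to Corollary~\ref{cor1} applied to $(\mathcal{C},\mathcal{F}\cap\mathcal{W})$ and $(\mathcal{C}\cap\mathcal{W},\mathcal{F})$, verify $\mathsf{T}(\mathcal{C})\cap\mathsf{U}^{-1}(\mathcal{W})=\mathsf{T}(\mathcal{C}\cap\mathcal{W})$ and $\mathsf{U}^{-1}(\mathcal{F})\cap\mathsf{U}^{-1}(\mathcal{W})=\mathsf{U}^{-1}(\mathcal{F}\cap\mathcal{W})$, note thickness of $\mathsf{U}^{-1}(\mathcal{W})$ via exactness of $\mathsf{U}$, and invoke Theorem~\ref{te} for the triangle equivalence. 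You supply a slightly more explicit justification for the identity $\mathsf{T}(\mathcal{C})\cap\mathsf{U}^{-1}(\mathcal{W})=\mathsf{T}(\mathcal{C}\cap\mathcal{W})$ than the paper does, but the content is the same.
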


\begin{proof}
	 Since  $(\mathcal{C,F,W})$ is a hereditary Hovey triples in $\mathcal{B}$, $(\mathcal{C}\cap\mathcal{W},\mathcal{F})$ and $(\mathcal{C},\mathcal{F}\cap\mathcal{W})$ are hereditary complete cotorsion pairs.
	
 Since $\mathbb{L}_1\mathsf{F}(\mathcal{C}\cap\mathcal{W})\subseteq\mathbb{L}_1\mathsf{F}(\mathcal{C})=0$ and $\mathsf{F}(\mathcal{C}\cap\mathcal{W})\subseteq\mathcal{F}$, it follows from Corollary~\ref{cor1} that $(\mathsf{T}(\mathcal{C}\cap\mathcal{W}),\;\mathsf{U}^{-1}(\mathcal{F}))$ is a hereditary complete cotorsion pair.
	 
	 Similarly, $(\mathsf{T}(\mathcal{C}),\;\mathsf{U}^{-1}(\mathcal{F}\cap\mathcal{W}))$ is a hereditary complete cotorsion pair, by Theorem~\ref{thm5}.
	 
	 Since $\mathsf{F}(\mathcal{C})\subseteq\mathcal{W}\cap\mathcal{F}\subseteq\mathcal{W}$, we get $\mathsf{T}(\mathcal{C})\cap\mathsf{U}^{-1}(\mathcal{W})=\mathsf{T}(\mathcal{C}\cap\mathcal{W})$. Also, we have $\mathsf{U}^{-1}(\mathcal{F})\cap\mathsf{U}^{-1}(\mathcal{W})=\mathsf{U}^{-1}(\mathcal{F}\cap\mathcal{W})$. We mention that $\mathsf{U}^{-1}(\mathcal{W})$ is thick, since $\mathcal{W}$ is thick. Therefore, $\big(\mathsf{T}(\mathcal{C}),\;\mathsf{U}^{-1}(\mathcal{F}),\;\mathsf{U}^{-1}(\mathcal{W})\big)$ is a hereditary Hovey triple in $\mathcal{B}\ltimes\mathsf{F}$.
	 
	 Since $\mathsf{F}(\mathcal{C})\subseteq\mathcal{F}\cap\mathcal{W}$, it follows from Theorem~\ref{te} that we have the following triangle equivalence
	 \[\begin{aligned}
	 	\mathrm{Ho}(\mathcal{M}_{\mathcal{B}\ltimes\mathsf{F}})&\simeq(\mathsf{T}(\mathcal{C})\cap \mathsf{U}^{-1}(\mathcal{F}))/(\mathsf{T}(\mathcal{C})\cap\mathsf{U}^{-1}(\mathcal{F})\cap\mathsf{U}^{-1}(\mathcal{W}))\\
	 	&\simeq\mathsf{T}(\mathcal{C}\cap\mathcal{F})/\mathsf{T}(\mathcal{C}\cap\mathcal{F}\cap\mathcal{W})\\
	 	&\simeq (\mathcal{C}\cap\mathcal{F})/(\mathcal{C}\cap\mathcal{F}\cap\mathcal{W})\\
	 	&\simeq\mathrm{Ho}(\mathcal{M}_\mathcal{B}).
	 \end{aligned}\]
	 Here, the third equivalence is induced by $(X,f)\mapsto\mathrm{Coker}(f)$.
\end{proof} 
\begin{thm}\label{thm10}
	Let $\mathcal{B}\ltimes\mathsf{F}$ be a right trivial extension of an abelian category $\mathcal{B}$ with $\mathsf{F}^2=0$. Assume that $(\mathcal{X,Y})$ is a hereditary complete cotorsion pair with $\omega=\mathcal{X}\cap\mathcal{Y}$ contravariantly finite. If $\mathbb{L}_1(\mathcal{X})=0$ and $\mathsf{F}(\mathcal{X})\subseteq\mathcal{Y}$, then $(\mathsf{T}(\mathcal{X}),\mathsf{U}^{-1}(\mathcal{Y}))$ a hereditary complete cotorsion pair with $\omega'=\mathsf{T}(\mathcal{X})\cap \mathsf{U}^{-1}(\mathcal{Y})$ contravariantly finite. Moreover, there is an equivalence of categories $\mathrm{Ho}(\mathcal{M}_\omega)\simeq\mathrm{Ho}(\mathcal{M}_{\omega'}).$
\end{thm}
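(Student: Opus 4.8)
The plan is to handle the three assertions in turn, the last one being the only nontrivial point. The first is immediate: since $\mathbb{L}_1\mathsf{F}(\mathcal{X})=0$ and $\mathsf{F}(\mathcal{X})\subseteq\mathcal{Y}$, Corollary~\ref{cor1} gives directly that $(\mathsf{T}(\mathcal{X}),\mathsf{U}^{-1}(\mathcal{Y}))$ is a hereditary complete cotorsion pair in $\mathcal{B}\ltimes\mathsf{F}$. For the second, I would first identify $\omega'$: for $X\in\mathcal{X}$ one has $\mathsf{U}\mathsf{T}(X)=X\oplus\mathsf{F}(X)$, and since $\mathcal{Y}=\mathcal{X}^\perp$ is closed under direct summands and finite direct sums and $\mathsf{F}(X)\in\mathcal{Y}$, the condition $\mathsf{T}(X)\in\mathsf{U}^{-1}(\mathcal{Y})$ is equivalent to $X\in\mathcal{Y}$; hence $\omega'=\mathsf{T}(\mathcal{X})\cap\mathsf{U}^{-1}(\mathcal{Y})=\mathsf{T}(\omega)$. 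To see that $\mathsf{T}(\omega)$ is contravariantly finite, fix $(A,a)\in\mathcal{B}\ltimes\mathsf{F}$ and a right $\omega$-approximation $f\colon W\to A$ in $\mathcal{B}$ with $W\in\omega$; using the adjunction $(\mathsf{T},\mathsf{U})$ of Proposition~\ref{pro1}(2), the adjoint morphism $\widehat{f}\colon\mathsf{T}(W)\to(A,a)$ is a right $\mathsf{T}(\omega)$-approximation, since any morphism $\mathsf{T}(W')\to(A,a)$ with $W'\in\omega$ corresponds under the adjunction to a morphism $W'\to A$ which factors through $f$, and naturality of the adjunction transports this factorization back to $\mathcal{B}\ltimes\mathsf{F}$.

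For the final assertion, Theorem~\ref{corr2} applied to the contravariantly finite hereditary complete cotorsion pairs $(\mathcal{X},\mathcal{Y})$ in $\mathcal{B}$ and $(\mathsf{T}(\mathcal{X}),\mathsf{U}^{-1}(\mathcal{Y}))$ in $\mathcal{B}\ltimes\mathsf{F}$ yields weakly projective model structures $\mathcal{M}_\omega$ and $\mathcal{M}_{\omega'}$ together with equivalences $\mathrm{Ho}(\mathcal{M}_\omega)\simeq\mathcal{X}/\omega$ and $\mathrm{Ho}(\mathcal{M}_{\omega'})\simeq\mathsf{T}(\mathcal{X})/\mathsf{T}(\omega)$. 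Thus it suffices to prove that $\mathsf{T}$ induces an equivalence $\overline{\mathsf{T}}\colon\mathcal{X}/\omega\xrightarrow{\ \sim\ }\mathsf{T}(\mathcal{X})/\mathsf{T}(\omega)$. The functor $\overline{\mathsf{T}}$ is well defined because $\mathsf{T}$ carries a morphism factoring through an object of $\omega$ to one factoring through an object of $\mathsf{T}(\omega)$, and it is dense by the definition of $\mathsf{T}(\mathcal{X})$. Faithfulness follows from $\mathsf{C}\mathsf{T}\cong\mathrm{Id}_{\mathcal{B}}$ (Proposition~\ref{pro1}(2)): if $\mathsf{T}(g)=\beta\circ\alpha$ with $\alpha\colon\mathsf{T}(X)\to\mathsf{T}(W)$ and $\beta\colon\mathsf{T}(W)\to\mathsf{T}(X')$ for some $W\in\omega$, then applying $\mathsf{C}$ shows $g$ factors through $W$.

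The main obstacle is fullness, where $\mathsf{F}^2=0$, the hypothesis $\mathsf{F}(\mathcal{X})\subseteq\mathcal{Y}$, and the completeness of $(\mathcal{X},\mathcal{Y})$ all come into play. Let $h\colon\mathsf{T}(X)\to\mathsf{T}(X')$ be a morphism in $\mathcal{B}\ltimes\mathsf{F}$. Writing $h$ as a matrix with respect to the decompositions $X\oplus\mathsf{F}(X)$ and $X'\oplus\mathsf{F}(X')$ and exploiting $\mathsf{F}^2=0$ in the compatibility condition $h\circ t_X=t_{X'}\circ\mathsf{F}(h)$, one computes that $h=\left(\begin{smallmatrix}h_{11}&0\\ h_{21}&\mathsf{F}(h_{11})\end{smallmatrix}\right)$ for uniquely determined $h_{11}\colon X\to X'$ and $h_{21}\colon X\to\mathsf{F}(X')$; hence $h-\mathsf{T}(h_{11})=\left(\begin{smallmatrix}0&0\\ h_{21}&0\end{smallmatrix}\right)$, and it remains to show this morphism factors through $\mathsf{T}(\omega)$. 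It factors as $\mathsf{T}(X)\xrightarrow{\,p\,}\mathsf{Z}(\mathsf{F}(X'))\xrightarrow{\,t_{X'}\,}\mathsf{T}(X')$, where $p$ is the morphism adjoint to $h_{21}\colon X\to\mathsf{F}(X')=\mathsf{U}\mathsf{Z}(\mathsf{F}(X'))$, and where I use $\mathsf{Z}(\mathsf{F}(X'))=\mathsf{T}(\mathsf{F}(X'))$ (valid since $\mathsf{F}^2=0$). Since $\mathsf{F}(X')\in\mathcal{Y}$ and $(\mathcal{X},\mathcal{Y})$ is complete, choose a special right $\mathcal{X}$-approximation $0\to K\to X''\xrightarrow{\,\pi\,}\mathsf{F}(X')\to0$; as $\mathcal{Y}$ is closed under extensions and $K,\mathsf{F}(X')\in\mathcal{Y}$, we get $X''\in\mathcal{X}\cap\mathcal{Y}=\omega$. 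Because $X\in\mathcal{X}$ and $K\in\mathcal{Y}$, we have $\mathrm{Ext}_{\mathcal{B}}^1(X,K)=0$, so $h_{21}$ lifts along $\pi$; transporting this lift through the adjunction $(\mathsf{T},\mathsf{U})$ produces $q\colon\mathsf{T}(X)\to\mathsf{T}(X'')$ with $\mathsf{T}(\pi)\circ q=p$. Therefore $h-\mathsf{T}(h_{11})=t_{X'}\circ\mathsf{T}(\pi)\circ q$ factors through $\mathsf{T}(X'')\in\mathsf{T}(\omega)$, which establishes fullness. Combining everything, $\mathrm{Ho}(\mathcal{M}_\omega)\simeq\mathcal{X}/\omega\simeq\mathsf{T}(\mathcal{X})/\mathsf{T}(\omega)\simeq\mathrm{Ho}(\mathcal{M}_{\omega'})$.
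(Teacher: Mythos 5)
Your proof is correct, and for the first two assertions it runs along the same lines as the paper: Corollary~\ref{cor1} gives the hereditary complete cotorsion pair, $\omega'=\mathsf{T}(\omega)$ follows from $\mathsf{F}(\mathcal{X})\subseteq\mathcal{Y}$ exactly as the paper asserts, and your adjunction-based verification that $\mathsf{T}(\omega)$ is contravariantly finite is a cleaner packaging of what the paper does by writing out the morphism $(f_0,\alpha\circ\mathsf{F}(f_0))$ explicitly. Where you genuinely add content is the final equivalence: the paper simply states that $\mathsf{T}(\mathcal{X})/\mathsf{T}(\omega)\simeq\mathcal{X}/\omega$ is induced by $(X,f)\mapsto\mathrm{Coker}(f)$, without justifying faithfulness of that quotient functor (or equivalently, fullness of $\overline{\mathsf{T}}$). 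Your matrix computation showing $h=\bigl(\begin{smallmatrix}h_{11}&0\\ h_{21}&\mathsf{F}(h_{11})\end{smallmatrix}\bigr)$, and the subsequent argument that $h-\mathsf{T}(h_{11})$ factors through $\mathsf{T}(X'')$ for some $X''\in\omega$ via a special right $\mathcal{X}$-approximation of $\mathsf{F}(X')\in\mathcal{Y}$ together with $\mathrm{Ext}^1_{\mathcal{B}}(X,K)=0$, is precisely the nontrivial step the paper leaves implicit, and it correctly uses all three hypotheses ($\mathsf{F}^2=0$, $\mathsf{F}(\mathcal{X})\subseteq\mathcal{Y}$, completeness). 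One tiny wording quibble: the existence of $q\colon\mathsf{T}(X)\to\mathsf{T}(X'')$ with $\mathsf{T}(\pi)\circ q=p$ is not really a consequence of the adjunction $(\mathsf{T},\mathsf{U})$; it is pure functoriality, since $p=(h_{21},0)=\mathsf{T}(h_{21})$ under the identification $\mathsf{Z}(\mathsf{F}(X'))\cong\mathsf{T}(\mathsf{F}(X'))$, so $q=\mathsf{T}(\tilde h_{21})$ works directly. This does not affect the validity of the argument.
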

\begin{proof}
	By Corollary~\ref{cor1}, it suffices to show that $\omega'=\mathsf{T}(\mathcal{X})\cap \mathsf{U}^{-1}(\mathcal{Y})$ is contravariantly finite. Since $\mathsf{F}(\mathcal{X})\subseteq\mathcal{Y}$, we obtain $$\omega'=\mathsf{T}(\mathcal{X})\cap \mathsf{U}^{-1}(\mathcal{Y})=\mathsf{T}(\mathcal{X}\cap\mathcal{Y})=\mathsf{T}(\omega).$$
	For any $(A,\alpha)\in\mathcal{B}\ltimes\mathsf{F}$, since $\omega$ is contravariantly finite, there is a morphism $f_0:X_0\rightarrow A$ such that every  morphism $X\rightarrow A$ with $X\in\omega$ factors through $f_0$. Then $(f_0,\alpha\circ\mathsf{F}(f_0)):\mathsf{T}(X_0)\rightarrow (A,\alpha)$ is a morphism in $\mathcal{B}\ltimes\mathsf{F}$ with $\mathsf{T}(X_0)\in\omega'$. Let $(\beta_1,\beta_2):\mathsf{T}(X)\rightarrow (A,\alpha)$ be a morphism in $\mathcal{B}\ltimes\mathsf{F}$ with $X\in\omega$. Then $\beta_2=\alpha\circ\mathsf{F}(\beta_1)$. There is a morphism $\gamma:X\rightarrow X_0$ such that $\beta_1=f_0\circ\gamma$. Then $\mathsf{T}(\gamma):\mathsf{T}(X)\rightarrow\mathsf{T}(X_0)$ satisfies $$(f_0,\alpha\circ\mathsf{F}(f_0))\circ\mathsf{T}(\gamma)=(f_0\circ\gamma,\alpha\circ\mathsf{F}(f_0)\circ\mathsf{F}(\gamma))=(\beta_1,\beta_2).$$
	It follows that $\omega'=\mathsf{T}(\omega)$ is contravariantly finite, as desired.
	
	Finally, by Theorem~\ref{corr2}, we have
	\[\mathrm{Ho}(\mathcal{M}_{\omega'})\simeq\mathsf{T}(\mathcal{X})/\omega' =\mathsf{T}(\mathcal{X})/\mathsf{T}(\omega)\simeq\mathcal{X}/\omega \simeq\mathrm{Ho}(\mathcal{M}_{\omega'}).\]
	Here, the third equivalence is induced by $(X,f)\mapsto\mathrm{Coker}(f)$.
\end{proof}
\subsection{Remarks: the dual version}
​For convenience, we state the dual versions of the main results for $\zeta$-coextensions of abelian categories without proofs.​ For $\zeta$-coextensions of abelain categoies, we refer to Subsection~\ref{subsec2.2}.

 Let $\mathsf{G}\rtimes_\zeta\mathcal{B}$ be a $\zeta$-coextension of $\mathcal{B}$. For a class $\mathcal{X}$ of objects of $\mathcal{B}$, define
\[\mathsf{U}^{-1}(\mathcal{X}):=\left\{[X,f]\in\mathsf{G}\rtimes_\zeta\mathcal{B}\;|\;X\in\mathcal{X}\right\};\]
Denote by $\nabla(\mathcal{X})$ the class of objects $[X,f]$ in $\mathsf{G}\rtimes_\zeta\mathcal{B}$ such that the sequence $X\xrightarrow{f}\mathsf{G}(X)\xrightarrow{\mathsf{G}(f)-\zeta_X}\mathsf{G}^2(X)$ is exact and $\mathrm{Ker}(f)\in\mathcal{X}$. 
It is easy to check that $\mathsf{H}(\mathcal{X})\subseteq\nabla(\mathcal{X})$. It is clear that if $\mathsf{G}^2=0$, then 
\[\nabla(\mathcal{X})=\left\{[X,f]\in\mathsf{G}\rtimes\mathcal{B}\;|\;f \text{ is an epimorphism with }\mathrm{Ker}(f)\in\mathcal{X}\right\}.\]
For an additive functor $\mathsf{G}$ between abelian categories, let $\{\mathbb{R}^n\mathsf{G}\}_{n\in\mathbb{Z}}$ be the right derived functors of $\mathsf{G}$.

\begin{thm}\label{thm11}
	Let $\mathsf{G}\rtimes_\zeta\mathcal{B}$ be an $\zeta$-coextension of $\mathcal{B}$. Assume that $(\mathcal{X,Y})$ is a cotorsion pair in $\mathcal{B}$. If $\mathbb{R}^1\mathsf{G}(\mathcal{Y})=0$, then $(\mathsf{U}^{-1}(\mathcal{X}),\mathsf{U}^{-1}(\mathcal{X})^\perp)$ is a cotorsion pair in $\mathsf{G}\rtimes_\zeta\mathcal{B}$; and moreover, it is hereditary if and only if so is $(\mathcal{X,Y})$. 
\end{thm}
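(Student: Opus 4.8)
The plan is to dualize the proof of Theorem~\ref{thm1} step by step. By the remark following Lemma~\ref{Ext1}, the dual of Lemma~\ref{ext1} is available: if $\mathbb{R}^i\mathsf{G}(Y)=0$ for $1\le i\le n$, then $\mathrm{Ext}_{\mathsf{G}\rtimes_\zeta\mathcal{B}}^i([X,f],\mathsf{H}(Y))\cong\mathrm{Ext}_\mathcal{B}^i(X,Y)$ for all $1\le i\le n$ and all $[X,f]\in\mathsf{G}\rtimes_\zeta\mathcal{B}$ (one takes an injective resolution of $Y$ in $\mathcal{B}$, applies $\mathsf{H}$ to obtain, via the vanishing of the $\mathbb{R}^i\mathsf{G}(Y)$ together with Proposition~\ref{pro2}~(3), an injective resolution of $\mathsf{H}(Y)$ in $\mathsf{G}\rtimes_\zeta\mathcal{B}$, and then applies the adjointness of $\mathsf{U}$ and $\mathsf{H}$). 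From this, the dual of Lemma~\ref{class} follows by exactly the argument given there: if $\mathbb{R}^1\mathsf{G}(\mathcal{Y})=0$, then $\prescript{\perp}{}{\mathsf{H}(\mathcal{Y})}=\mathsf{U}^{-1}(\prescript{\perp}{}{\mathcal{Y}})$, since for $[X,f]\in\mathsf{G}\rtimes_\zeta\mathcal{B}$ the condition $\mathrm{Ext}^1_{\mathsf{G}\rtimes_\zeta\mathcal{B}}([X,f],\mathsf{H}(Y))=0$ for all $Y\in\mathcal{Y}$ is equivalent, by the dual Ext-isomorphism, to $\mathrm{Ext}^1_\mathcal{B}(X,Y)=0$ for all $Y\in\mathcal{Y}$, i.e.\ to $X\in\prescript{\perp}{}{\mathcal{Y}}$.

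Granted these two facts, the cotorsion pair assertion is purely formal. It suffices to show $\mathsf{U}^{-1}(\mathcal{X})=\prescript{\perp}{}{\big(\mathsf{U}^{-1}(\mathcal{X})^\perp\big)}$. Since $(\mathcal{X},\mathcal{Y})$ is a cotorsion pair we have $\mathcal{X}=\prescript{\perp}{}{\mathcal{Y}}$, so the dual of Lemma~\ref{class} gives $\mathsf{U}^{-1}(\mathcal{X})=\mathsf{U}^{-1}(\prescript{\perp}{}{\mathcal{Y}})=\prescript{\perp}{}{\mathsf{H}(\mathcal{Y})}$, and therefore
\[\prescript{\perp}{}{\big(\mathsf{U}^{-1}(\mathcal{X})^\perp\big)}=\prescript{\perp}{}{\Big(\big(\prescript{\perp}{}{\mathsf{H}(\mathcal{Y})}\big)^\perp\Big)}=\prescript{\perp}{}{\mathsf{H}(\mathcal{Y})}=\mathsf{U}^{-1}(\mathcal{X}),\]
where the middle step uses the identity $\prescript{\perp}{}{\big((\prescript{\perp}{}{\mathcal{S}})^\perp\big)}=\prescript{\perp}{}{\mathcal{S}}$, valid for any class $\mathcal{S}$ of objects. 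Hence $(\mathsf{U}^{-1}(\mathcal{X}),\mathsf{U}^{-1}(\mathcal{X})^\perp)$ is a cotorsion pair in $\mathsf{G}\rtimes_\zeta\mathcal{B}$.

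For the heredity equivalence I would run the last two paragraphs of the proof of Theorem~\ref{thm1} dually, replacing ``closed under cokernels of monomorphisms'' by ``closed under kernels of epimorphisms'' and using that $\mathsf{U}$ is exact, that $\mathsf{Z}$ is exact with $\mathsf{U}\mathsf{Z}\cong\mathsf{Id}_\mathcal{B}$ (Proposition~\ref{pro2}~(1)), and that the left class of a hereditary cotorsion pair is the one closed under kernels of epimorphisms. Concretely: if $(\mathcal{X},\mathcal{Y})$ is hereditary and $f\colon[X_1,a_1]\to[X_2,a_2]$ is an epimorphism in $\mathsf{G}\rtimes_\zeta\mathcal{B}$ with both objects in $\mathsf{U}^{-1}(\mathcal{X})$, then exactness of $\mathsf{U}$ gives $\mathsf{U}(\mathrm{Ker}(f))\cong\mathrm{Ker}(\mathsf{U}(f))\in\mathcal{X}$, so $\mathrm{Ker}(f)\in\mathsf{U}^{-1}(\mathcal{X})$; conversely, for an epimorphism $g\colon X_1\to X_2$ in $\mathcal{B}$ with $X_1,X_2\in\mathcal{X}$, the morphism $\mathsf{Z}(g)$ is an epimorphism between objects of $\mathsf{U}^{-1}(\mathcal{X})$, whence heredity there gives $\mathrm{Ker}(\mathsf{Z}(g))\in\mathsf{U}^{-1}(\mathcal{X})$ and hence $\mathrm{Ker}(g)\cong\mathsf{U}(\mathrm{Ker}(\mathsf{Z}(g)))\in\mathcal{X}$. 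I do not expect a genuine obstacle here, as the one substantive input (the dual of Lemma~\ref{ext1}) is already granted; the only point needing care is to carry out the dualization on the correct side, namely that $\mathsf{H}$ (the co-induction functor, right adjoint to the exact $\mathsf{U}$ and preserving injectives) takes over the role that $\mathsf{T}$ played for $\eta$-extensions, and that the cohomological hypothesis must be imposed on the \emph{right} class $\mathcal{Y}$, via $\mathbb{R}^1\mathsf{G}(\mathcal{Y})=0$.
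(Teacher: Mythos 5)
Your proposal is correct and follows exactly the route the paper intends: the paper explicitly omits proofs for Theorems~\ref{thm11}--\ref{thm13} and refers to them as dual versions of Theorems~\ref{thm1}, \ref{thm2}, \ref{thm5}, so the expected argument is precisely the step-by-step dualization you carry out (dual of Lemma~\ref{ext1} via $\mathsf{H}$ preserving injectives and the adjunction with $\mathsf{U}$, dual of Lemma~\ref{class}, then the formal $\prescript{\perp}{}{\big((\prescript{\perp}{}{\mathcal{S}})^\perp\big)}=\prescript{\perp}{}{\mathcal{S}}$ identity, and heredity via exactness of $\mathsf{U}$ and $\mathsf{Z}$ together with closure of the left class under kernels of epimorphisms). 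Your remark that the cohomological hypothesis must now sit on $\mathcal{Y}$ via $\mathbb{R}^1\mathsf{G}$, and that $\mathsf{H}$ plays the role $\mathsf{T}$ played before, is exactly the one point of care, and you have it right.
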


\begin{thm}\label{thm12}
	Let $\mathsf{G}\rtimes\mathcal{B}$ be a left trivial extension of $\mathcal{B}$ with $\mathsf{G}^2=0$. Assume that $(\mathcal{X,Y})$ is a cotorsion pair in $\mathcal{B}$. Then $(\prescript{\perp}{}{\nabla(\mathcal{Y})},\nabla(\mathcal{Y}))$ is a cotorsion pair in $\mathsf{G}\rtimes\mathcal{B}$. Moreover, if $\mathbb{R}^1\mathsf{G}(\mathcal{Y})=0$, then $(\prescript{\perp}{}{\nabla(\mathcal{Y})},\nabla(\mathcal{Y}))$ is hereditary if and only if so is $(\mathcal{X,Y})$.
\end{thm}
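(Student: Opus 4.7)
The plan is to mirror the proof of Theorem~\ref{thm2} via systematic dualization, since Theorem~\ref{thm12} is the $\zeta$-coextension analogue of that result. First I would formulate and prove the dual analogues of Lemmas~\ref{Com}, \ref{Ext1}, \ref{class2}, and \ref{closed} in the setting of $\mathsf{G}\rtimes_\zeta\mathcal{B}$, systematically replacing cokernels by kernels, monomorphisms by epimorphisms, the functor $\mathsf{T}$ by $\mathsf{H}$, the functor $\mathsf{C}$ by $\mathsf{K}$, and the left derived functors $\mathbb{L}_i\mathsf{F}$ by the right derived functors $\mathbb{R}^i\mathsf{G}$. In particular, the dual of Lemma~\ref{Ext1} should assert that if the sequence $X\xrightarrow{f}\mathsf{G}(X)\xrightarrow{\mathsf{G}(f)-\zeta_X}\mathsf{G}^2(X)$ is exact, then
\[\mathrm{Ext}^1_{\mathsf{G}\rtimes_\zeta\mathcal{B}}(\mathsf{Z}(Y),[X,f])\cong\mathrm{Ext}^1_\mathcal{B}(Y,\mathrm{Ker}(f))\]
for any $Y\in\mathcal{B}$; its proof uses an injective coresolution of $[X,f]$ (available by Proposition~\ref{pro2}(4)) together with the adjunction $(\mathsf{K},\mathsf{Z})$ from Proposition~\ref{pro2}(2).

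Next I would establish the dual of Lemma~\ref{class2}: if $\mathcal{X}$ contains every projective object of $\mathcal{B}$, then $\nabla(\mathcal{X}^\perp)=\mathsf{Z}(\mathcal{X})^\perp$ in $\mathsf{G}\rtimes\mathcal{B}$. The inclusion $\nabla(\mathcal{X}^\perp)\subseteq\mathsf{Z}(\mathcal{X})^\perp$ is immediate from the Ext-isomorphism above. For the reverse inclusion, given $[X,f]\in\mathsf{Z}(\mathcal{X})^\perp$ and $X_0\in\mathcal{X}$, one forms the dual of the short exact sequence~(\ref{seq2}); its splitting under $\mathrm{Ext}^1_{\mathsf{G}\rtimes\mathcal{B}}([X,f],\mathsf{Z}(X_0))=0$ yields that $\mathrm{Hom}_\mathcal{B}(X_0,f):\mathrm{Hom}_\mathcal{B}(X_0,X)\to\mathrm{Hom}_\mathcal{B}(X_0,\mathsf{G}(X))$ is surjective. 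Applying this with $X_0$ any projective object surjecting onto $\mathrm{Coker}(f)$ (which lies in $\mathcal{X}$ by hypothesis) forces $f$ to be an epimorphism, after which the Ext-isomorphism yields $\mathrm{Ker}(f)\in\mathcal{X}^\perp$, so $[X,f]\in\nabla(\mathcal{X}^\perp)$.

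Given these ingredients, the first assertion of Theorem~\ref{thm12} is formal: since $(\mathcal{X,Y})$ is a cotorsion pair, $\mathcal{X}$ contains every projective object of $\mathcal{B}$ and $\mathcal{Y}=\mathcal{X}^\perp$, hence the dual of Lemma~\ref{class2} yields $\nabla(\mathcal{Y})=\mathsf{Z}(\mathcal{X})^\perp$, and therefore
\[\big(\prescript{\perp}{}{\nabla(\mathcal{Y})}\big)^\perp=\big(\prescript{\perp}{}{(\mathsf{Z}(\mathcal{X})^\perp)}\big)^\perp=\mathsf{Z}(\mathcal{X})^\perp=\nabla(\mathcal{Y}),\]
via the identity $(\prescript{\perp}{}{(\mathcal{S}^\perp)})^\perp=\mathcal{S}^\perp$. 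For the heredity equivalence, under $\mathbb{R}^1\mathsf{G}(\mathcal{Y})=0$ one proves the dual of Lemma~\ref{closed}: $\nabla(\mathcal{Y})$ is closed under cokernels of monomorphisms if and only if $\mathcal{Y}$ is. One direction uses $\mathsf{Z}(\mathcal{Y})\subseteq\nabla(\mathcal{Y})$ together with the exactness of $\mathsf{U}$; the other uses $\mathsf{H}(\mathcal{Y})\subseteq\nabla(\mathcal{Y})$ and the vanishing of $\mathbb{R}^1\mathsf{G}$ to ensure $\mathsf{H}$ sends the given short exact sequence in $\mathcal{Y}$ to a short exact sequence in $\mathsf{G}\rtimes\mathcal{B}$, after which a snake-lemma chase analogous to the proof of Lemma~\ref{closed} concludes.

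The main obstacle I expect is the dual of Lemma~\ref{class2}, specifically showing that $f$ is an epimorphism (rather than a monomorphism as in the original argument). The original proof relied on embedding $\mathsf{F}(Y)$ into an injective of $\mathcal{B}$; dually, one must cover $\mathrm{Coker}(f)$ by a projective lying in $\mathcal{X}$ and verify, under $\mathsf{G}^2=0$, that the resulting diagram chase produces the required factorization. The remaining pieces — the dual of Lemma~\ref{closed} and the formal deduction of the cotorsion pair structure and heredity equivalence — are then routine transpositions of the arguments in Section~\ref{sec3}.
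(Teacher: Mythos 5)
Your approach of dualizing the proof of Theorem~\ref{thm2} is precisely what the paper intends (it states the $\zeta$-coextension results explicitly ``without proofs,'' appealing to duality), and the ingredients you identify — the duals of Lemmas~\ref{Com}, \ref{Ext1}, \ref{class2}, \ref{closed}, the Ext-isomorphism $\mathrm{Ext}^1_{\mathsf{G}\rtimes\mathcal{B}}(\mathsf{Z}(Y),[X,f])\cong\mathrm{Ext}^1_{\mathcal{B}}(Y,\Ker f)$, and the formal deduction via $(\prescript{\perp}{}{(\mathcal{S}^\perp)})^\perp=\mathcal{S}^\perp$ — are all correct. Two small slips in the dual of Lemma~\ref{class2}: since $[X,f]\in\mathsf{Z}(\mathcal{X})^\perp$, the vanishing that drives the argument is $\mathrm{Ext}^1_{\mathsf{G}\rtimes\mathcal{B}}(\mathsf{Z}(X_0),[X,f])=0$, not $\mathrm{Ext}^1_{\mathsf{G}\rtimes\mathcal{B}}([X,f],\mathsf{Z}(X_0))=0$; and the cleaner dual of Step~2 of that lemma is to take a projective epimorphism $P\twoheadrightarrow\mathsf{G}(X)$ (so that the factorization $p=f\circ v$ from the surjectivity of $\mathrm{Hom}_{\mathcal{B}}(P,f)$ immediately forces $f$ to be epic), rather than onto $\mathrm{Coker}(f)$ — although your variant does work after an extra lift along $\mathsf{G}(X)\twoheadrightarrow\mathrm{Coker}(f)$.
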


\begin{thm}\label{thm13}
	Let $\mathsf{G}\rtimes\mathcal{B}$ be a left trivial extension of an abelian category $\mathcal{B}$. Assume that $(\mathcal{X,Y})$ is a hereditary complete cotorsion pair, $\mathbb{R}^1\mathsf{G}(\mathcal{Y})=0$ and $\mathsf{G}(\mathcal{Y})\subseteq\mathcal{X}$. Then the following hold. 
	\begin{enumerate}[(1)]
	 \item $(\mathsf{U}^{-1}(\mathcal{X}),\mathsf{U}^{-1}(\mathcal{X})^\perp)$ is a hereditary complete cotorsion pair.
	 \item If $\mathsf{G}^2=0$, then $(\mathsf{U}^{-1}(\mathcal{X}),\nabla(\mathcal{Y}))$ is a hereditary complete cotorsion pair.
\end{enumerate}
\end{thm}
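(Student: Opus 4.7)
The plan is to mirror, by duality, the proofs of Theorem~\ref{thm5} and Corollary~\ref{cor1}. For this I use the dual analogues of Lemmas~\ref{ext1}, \ref{Ext1}, \ref{class}, \ref{DT}, and \ref{can-seq}, together with the dual of Theorem~\ref{thm4}; each is a routine transposition and its proof follows the original step by step using that $\mathsf{G}$ is left exact with right derived functors $\mathbb{R}^i\mathsf{G}$, the adjunction $(\mathsf{U,H})$, and the isomorphism $\mathsf{KH}\cong\mathsf{Id}_\mathcal{B}$ from Proposition~\ref{pro2}. Part~(1) reduces to producing a special \emph{right} $\mathsf{U}^{-1}(\mathcal{X})$-approximation of an arbitrary $[X,f]$, and part~(2) will follow formally from part~(1) once $\prescript{\perp}{}{\nabla(\mathcal{Y})}$ is identified with $\mathsf{U}^{-1}(\mathcal{X})$.

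For part~(1), Theorem~\ref{thm11} already supplies the hereditary cotorsion pair $(\mathsf{U}^{-1}(\mathcal{X}),\mathsf{U}^{-1}(\mathcal{X})^\perp)$, so only completeness is at stake. Given $[X,f]\in\mathsf{G}\rtimes\mathcal{B}$, I factor $f=\iota\circ k$ with $k:X\twoheadrightarrow\mathrm{Im}\,f$ and $\iota:\mathrm{Im}\,f\hookrightarrow\mathsf{G}(X)$; the dual of the computation~(\ref{3.4}) in Lemma~\ref{can-seq}, run via left-exactness of $\mathsf{G}$ (so $\mathsf{G}(\iota)$ is monic and $k$ is epic), produces a short exact sequence
$$0\to\mathsf{Z}(\Ker f)\to[X,f]\to\mathsf{Z}(\mathrm{Im}\,f)\to0.$$
By the dual of Proposition~\ref{cover}(1), it suffices to build a special right $\mathsf{U}^{-1}(\mathcal{X})$-approximation of $\mathsf{Z}(W)$ for arbitrary $W\in\mathcal{B}$. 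Pick an exact sequence $0\to Y_W\xrightarrow{\iota_W}X_W\xrightarrow{\pi_W}W\to0$ with $X_W\in\mathcal{X}$, $Y_W\in\mathcal{Y}$ (from completeness of $(\mathcal{X,Y})$), and form
$$0\to\mathsf{H}(Y_W)\longrightarrow[\mathsf{G}(Y_W)\oplus X_W,\alpha_W]\xrightarrow{(0,\pi_W)}\mathsf{Z}(W)\to0,$$
where $\alpha_W:\mathsf{G}(Y_W)\oplus X_W\to\mathsf{G}^2(Y_W)\oplus\mathsf{G}(X_W)$ is the matrix whose only nonzero entry is $\mathsf{G}(\iota_W)$ in the lower-left slot; this is the dual of the $\beta$ in Theorem~\ref{thm5}. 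The hypothesis $\mathsf{G}(\mathcal{Y})\subseteq\mathcal{X}$ together with $X_W\in\mathcal{X}$ places the middle term in $\mathsf{U}^{-1}(\mathcal{X})$, and the dual of Lemma~\ref{class} (using $\mathbb{R}^1\mathsf{G}(\mathcal{Y})=0$) gives $\prescript{\perp}{}{\mathsf{H}(\mathcal{Y})}=\mathsf{U}^{-1}(\mathcal{X})$, so $\mathsf{H}(Y_W)\in\mathsf{U}^{-1}(\mathcal{X})^\perp$.

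For part~(2), Theorem~\ref{thm12} yields the hereditary cotorsion pair $(\prescript{\perp}{}{\nabla(\mathcal{Y})},\nabla(\mathcal{Y}))$. Since $\mathsf{G}(\mathcal{Y})\subseteq\mathcal{X}$, one has $\mathrm{Ext}^1_\mathcal{B}(\mathsf{G}(Y),Y)=0$ for every $Y\in\mathcal{Y}$; the dual of Lemma~\ref{DT} then collapses $\nabla(\mathcal{Y})$ to $\mathsf{H}(\mathcal{Y})$, and the dual of Theorem~\ref{thm4} delivers $\prescript{\perp}{}{\nabla(\mathcal{Y})}=\mathsf{U}^{-1}(\mathcal{X})$ together with $\nabla(\mathcal{Y})=\mathsf{U}^{-1}(\mathcal{X})^\perp$. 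Completeness of $(\mathsf{U}^{-1}(\mathcal{X}),\nabla(\mathcal{Y}))$ therefore transfers from part~(1), and heredity follows as $(\mathcal{X,Y})$ is hereditary.

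The main obstacle is not conceptual but mechanical: verifying that $\alpha_W$ defines a morphism in $\mathsf{G}\rtimes\mathcal{B}$ (i.e.\ $\mathsf{G}(\alpha_W)\circ\alpha_W=0=\zeta\circ\alpha_W$), that the two squares of the assembly diagram commute, and that the displayed row is exact in the underlying abelian category. These are the precise duals of the verifications in Theorem~\ref{thm5}, with matrix positions transposed and $\mathsf{F},\eta,\mathsf{T},\Ker,\mathbb{L}$ replaced by $\mathsf{G},\zeta,\mathsf{H},\mathrm{Coker},\mathbb{R}$ throughout—the place where a mechanical dualization is most likely to introduce sign errors or misplaced matrix entries.
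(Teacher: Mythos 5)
The paper deliberately states Theorem~\ref{thm13} without proof, presenting it as the formal dual of Theorem~\ref{thm5} and Corollary~\ref{cor1}; your proposal is precisely the dualization the paper is implicitly claiming, and it is correct. The key verifications check out: the short exact sequence $0\to\mathsf{Z}(\Ker f)\to[X,f]\to\mathsf{Z}(\mathrm{Im}\,f)\to0$ is the correct dual of Lemma~\ref{can-seq} (with left-exactness of $\mathsf{G}$ supplying that $\mathsf{G}(\iota)$ is monic, which kills the structure map on $\mathrm{Im}\,f$ when $\zeta=0$); the matrix $\alpha_W$ with $\mathsf{G}(\iota_W)$ in the $(2,1)$ slot satisfies $\mathsf{G}(\alpha_W)\circ\alpha_W=0$ unconditionally, so the middle term lies in $\mathsf{G}\rtimes\mathcal{B}$ without any $\mathsf{G}^2=0$ hypothesis in part~(1); and the identification $\nabla(\mathcal{Y})=\mathsf{H}(\mathcal{Y})=\mathsf{U}^{-1}(\mathcal{X})^\perp$ in part~(2) via the duals of Lemma~\ref{DT} and Theorem~\ref{thm4} is sound. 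One small imprecision: you invoke ``the dual of Proposition~\ref{cover}(1),'' but Proposition~\ref{cover}(1) as stated already concerns special \emph{right} approximations and applies directly; no dualization of that proposition is needed.
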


\section{Applications and examples}\label{sec4}

Some examples of extensions of abelian categories are summarised in \cite{B}. In this section, we study cotorsion pairs in the comma categories, some Morita context rings and trivial extensions of rings.

\subsection{Cotorsion pairs in comma categories}

Let $\mathcal{C}$ and $\mathcal{D}$ be abelian categories. For a right exact functor $\mathsf{G}:\mathcal{C}\rightarrow\mathcal{D}$, the \emph{comma category} \cite{M2}  $(\mathsf{G}\downarrow\mathcal{D})$ is defined as follows. The objects are $\big(\begin{smallmatrix}
	X\\ Y
\end{smallmatrix}\big)_f$ with $X\in\mathcal{C}$, $Y\in\mathcal{D}$ and $f\in\mathrm{Hom}_{\mathcal{D}}(\mathsf{G}X,Y)$. A morphism from $\big(\begin{smallmatrix}
X\\ Y
\end{smallmatrix}\big)_f$ to $\big(\begin{smallmatrix}
X'\\ Y'
\end{smallmatrix}\big)_{f'}$ is $\big(\begin{smallmatrix}\alpha\\ \beta\end{smallmatrix}\big)$ with $\alpha\in\mathrm{Hom}_\mathcal{C}(X,X')$ and $\beta\in\mathrm{Hom}_{\mathcal{D}}(Y,Y')$ such that $f'\circ \mathsf{G}(\alpha)=\beta\circ f.$

Let $\mathcal{B}=\mathcal{C}\times\mathcal{D}$ and $\mathsf{F}:\mathcal{B}\rightarrow\mathcal{B},\;(X,Y)\mapsto(0,\mathsf{G}X)$. Then $\mathsf{F}$ is right exact and $\mathsf{F}^2=0$. It is known that there is an isomorphism of categories $$\Phi:(\mathsf{G}\downarrow\mathcal{D})\xlongrightarrow{\sim}\mathcal{B}\ltimes\mathsf{F},\;\big(\begin{smallmatrix}
	X\\ Y
\end{smallmatrix}\big)_f\longmapsto ((X,Y),(0,f)).$$

For a class $\mathcal{X}$ of objects of $\mathcal{C}$ and a class $\mathcal{Y}$ of objects of $\mathcal{D}$, let
\begin{align*}\left(\begin{smallmatrix}\mathcal X\\ \mathcal Y\end{smallmatrix}\right): & =  \ \{\left(\begin{smallmatrix} X \\ Y\end{smallmatrix}\right)_{f}\in (\mathsf{G}\downarrow\mathcal{D}) \ \mid \   X\in \mathcal X, \ \ Y\in\mathcal Y\};\\[7pt]
	\mathrm{R}(\mathcal X, \ \mathcal Y): & = \ \{\left(\begin{smallmatrix} X \\ Y\end{smallmatrix}\right)_{f}\in (\mathsf{G}\downarrow\mathcal{D}) \ \mid X\in\mathcal{X},\;\mathrm{Coker}(f)\in\mathcal{Y} \\ &\;\;\;\;\;\;\;\;\;\;\;\;\;\;\;\;\;\;\;\;\;\;\;\text{and }f\text{ is a monomorphism}\}.
\end{align*}

We mention that $(\mathcal{X,Y})$ is a class of objects of $\mathcal{B}$. We recall $\mathsf{U}^{-1}((\mathcal{X},\;\mathcal{Y}))$ and $\Delta((\mathcal{X},\;\mathcal{Y}))$ in Subsection~\ref{3.1}. Then we have
\[\Phi\left(\begin{smallmatrix}\mathcal X\\ \mathcal Y\end{smallmatrix}\right)=\mathsf{U}^{-1}((\mathcal{X},\;\mathcal{Y}))\;\mbox{ and }\;\Phi(\mathrm{R}(\mathcal{X},\;\mathcal{Y}))=\Delta((\mathcal{X},\;\mathcal{Y})).\]

There are some characterizations of cotorsion pairs and special precovering classes in comma categories in \cite{HW,HZ}.

\begin{thm}\label{thm7}
	Keep the notation as above. Let $(\mathcal{U},\;\mathcal{X})$ and $(\mathcal{V},\;\mathcal{Y})$ be cotorsion pairs in $\mathcal{C}$ and $\mathcal{D}$, respectively. 
	\begin{enumerate}[(1)]
		\item If $\mathbb{L}_1\mathsf{G}(\mathcal{U})=0$, then $\big(\prescript{\perp}{}{\left(\begin{smallmatrix}
				\mathcal{X}\\ \mathcal{Y}
			\end{smallmatrix}\right)},\;\left(\begin{smallmatrix}
			\mathcal{X}\\ \mathcal{Y}
		\end{smallmatrix}\right)\big)$ is a cotorsion pair in $(\mathsf{G}\downarrow\mathcal{D})$; and moreover, it is hereditary if and only if so are $(\mathcal{U}\;\mathcal{X})$ and $(\mathcal{V},\;\mathcal{Y})$.
		\item $(\mathrm{R}(\mathcal U, \mathcal V),\;\mathrm{R}(\mathcal U, \mathcal V)^\perp)$ is a cotorsion pair in $(\mathsf{G}\downarrow\mathcal{D})$; and moreover, if $\mathbb{L}_1\mathsf{G}(\mathcal{X})=0$, then it is hereditary if and only if so are $(\mathcal{U},\;\mathcal{X})$ and $(\mathcal{V},\;\mathcal{Y})$.
		\item If $\mathbb{L}_1\mathsf{G}(\mathcal{U})=0$, then we have $\big(\prescript{\perp}{}{\left(\begin{smallmatrix}
				\mathcal{X}\\ \mathcal{Y}
			\end{smallmatrix}\right)},\;\left(\begin{smallmatrix}
			\mathcal{X}\\ \mathcal{Y}
		\end{smallmatrix}\right)\big)=(\mathrm{R}(\mathcal U,\mathcal V),\;\mathrm{R}(\mathcal U, \mathcal V)^\perp).$
	\end{enumerate}
\end{thm}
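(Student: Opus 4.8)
The plan is to reduce everything, via the isomorphism of categories $\Phi:(\mathsf G\downarrow\mathcal D)\xrightarrow{\sim}\mathcal B\ltimes\mathsf F$ with $\mathcal B=\mathcal C\times\mathcal D$ and $\mathsf F(X,Y)=(0,\mathsf G X)$ (so that $\mathsf F^2=0$), to Theorems~\ref{thm1}, \ref{thm2} and \ref{thm4} and Proposition~\ref{pro3.10} applied inside $\mathcal B\ltimes\mathsf F$. First I would set up the data over $\mathcal B$. Using $\mathrm{Ext}^n_{\mathcal C\times\mathcal D}\big((A,B),(A',B')\big)\cong\mathrm{Ext}^n_{\mathcal C}(A,A')\oplus\mathrm{Ext}^n_{\mathcal D}(B,B')$ one checks coordinatewise that $\big((\mathcal U,\mathcal V),(\mathcal X,\mathcal Y)\big)$ is a cotorsion pair in $\mathcal B$ (that is, $\prescript{\perp}{}{(\mathcal X,\mathcal Y)}=(\mathcal U,\mathcal V)$ and $(\mathcal U,\mathcal V)^{\perp}=(\mathcal X,\mathcal Y)$), and that it is hereditary if and only if both $(\mathcal U,\mathcal X)$ and $(\mathcal V,\mathcal Y)$ are, since $\mathrm{Ext}^2_{\mathcal B}\big((\mathcal U,\mathcal V),(\mathcal X,\mathcal Y)\big)=\mathrm{Ext}^2_{\mathcal C}(\mathcal U,\mathcal X)\oplus\mathrm{Ext}^2_{\mathcal D}(\mathcal V,\mathcal Y)$. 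Also $\mathsf F$ factors as $\mathcal C\times\mathcal D\to\mathcal C\xrightarrow{\mathsf G}\mathcal D\hookrightarrow\mathcal C\times\mathcal D$ where the first and last functors are exact and preserve projectives, so $\mathbb L_i\mathsf F(X,Y)=(0,\mathbb L_i\mathsf G X)$; hence $\mathbb L_1\mathsf F((\mathcal U,\mathcal V))=0$ if and only if $\mathbb L_1\mathsf G(\mathcal U)=0$. Finally, being an isomorphism of abelian categories, $\Phi$ preserves $\mathrm{Ext}$, hence $(-)^{\perp}$ and $\prescript{\perp}{}{(-)}$; together with the identifications $\Phi\big(\left(\begin{smallmatrix}\mathcal X\\ \mathcal Y\end{smallmatrix}\right)\big)=\mathsf U^{-1}((\mathcal X,\mathcal Y))$ and $\Phi(\mathrm R(\mathcal U,\mathcal V))=\Delta((\mathcal U,\mathcal V))$ recorded just above the theorem, this lets me transfer cotorsion pairs in both directions.

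For (1) I would apply Theorem~\ref{thm1} to the cotorsion pair $\big((\mathcal U,\mathcal V),(\mathcal X,\mathcal Y)\big)$ in $\mathcal B$: under $\mathbb L_1\mathsf G(\mathcal U)=0$ it produces the cotorsion pair $\big(\prescript{\perp}{}{\mathsf U^{-1}((\mathcal X,\mathcal Y))},\,\mathsf U^{-1}((\mathcal X,\mathcal Y))\big)$ in $\mathcal B\ltimes\mathsf F$, hereditary iff $\big((\mathcal U,\mathcal V),(\mathcal X,\mathcal Y)\big)$ is; transporting along $\Phi$ and using the heredity equivalence above yields (1). For (2), since $\mathsf F^2=0$ I would apply Theorem~\ref{thm2} to the same cotorsion pair, obtaining the cotorsion pair $\big(\Delta((\mathcal U,\mathcal V)),\,\Delta((\mathcal U,\mathcal V))^{\perp}\big)$ in $\mathcal B\ltimes\mathsf F$; the heredity clause comes from Theorem~\ref{thm2} together with the heredity equivalence above once the relevant $\mathbb L_1$ vanishes, and transporting along $\Phi$ (using $\Phi(\mathrm R(\mathcal U,\mathcal V))=\Delta((\mathcal U,\mathcal V))$) gives (2).

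For (3), with $\mathbb L_1\mathsf G(\mathcal U)=0$ in force, the easy half is Proposition~\ref{pro3.10} applied to the cotorsion pairs of (1) and (2): it gives $\Delta((\mathcal U,\mathcal V))^{\perp}\subseteq\mathsf U^{-1}((\mathcal X,\mathcal Y))$, equivalently $\prescript{\perp}{}{\mathsf U^{-1}((\mathcal X,\mathcal Y))}\subseteq\Delta((\mathcal U,\mathcal V))$ — one containment of cotorsion pairs, hence (after transport) $\mathrm R(\mathcal U,\mathcal V)^{\perp}\subseteq\left(\begin{smallmatrix}\mathcal X\\ \mathcal Y\end{smallmatrix}\right)$. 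For the reverse — equivalently the explicit formula $\prescript{\perp}{}{\left(\begin{smallmatrix}\mathcal X\\ \mathcal Y\end{smallmatrix}\right)}=\mathrm R(\mathcal U,\mathcal V)$ — I would appeal to Theorem~\ref{thm4} for the cotorsion pair $\big((\mathcal U,\mathcal V),(\mathcal X,\mathcal Y)\big)$, which yields $\Delta((\mathcal U,\mathcal V))^{\perp}=\mathsf U^{-1}((\mathcal X,\mathcal Y))$ and $\prescript{\perp}{}{\mathsf U^{-1}((\mathcal X,\mathcal Y))}=\Delta((\mathcal U,\mathcal V))$, i.e. exactly the claimed equality of cotorsion pairs after transport along $\Phi$. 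I expect the main obstacle to be checking the hypothesis of Theorem~\ref{thm4} in this situation, namely $\mathrm{Ext}^1_{\mathcal B}(S,\mathsf F S)=0$ for $S$ in the left class $(\mathcal U,\mathcal V)$; by the product decomposition this reads $\mathrm{Ext}^1_{\mathcal D}(V,\mathsf G U)=0$ for all $U\in\mathcal U$ and $V\in\mathcal V$, and it is precisely here — not in any of the formal transport steps — that one must work, analysing it through the canonical exact sequences of Lemma~\ref{can-seq} and the isomorphism of Lemma~\ref{Ext1}.
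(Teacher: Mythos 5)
Your reductions for parts (1) and (2) are correct and match the paper: set $\mathcal L=(\mathcal U,\mathcal V)$, $\mathcal R=(\mathcal X,\mathcal Y)$, note $\mathbb L_1\mathsf F(\mathcal L)=(0,\mathbb L_1\mathsf G(\mathcal U))$ and $\mathbb L_1\mathsf F(\mathcal R)=(0,\mathbb L_1\mathsf G(\mathcal X))$, and transport Theorems~\ref{thm1} and~\ref{thm2} along $\Phi$.

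Part (3) has a genuine gap. You want to invoke Theorem~\ref{thm4} for the pair $(\mathcal L,\mathcal R)$, which requires $\mathrm{Ext}^1_{\mathcal B}(S,\mathsf F S)=0$ for $S\in\mathcal L$, i.e.\ $\mathrm{Ext}^1_{\mathcal D}(V,\mathsf G U)=0$ for all $U\in\mathcal U$, $V\in\mathcal V$. You correctly identify this as the crux, but it is not something one can ``work out'' from the standing hypotheses: part (3) of Theorem~\ref{thm7} assumes only $\mathbb L_1\mathsf G(\mathcal U)=0$, and the condition $\mathrm{Ext}^1_{\mathcal D}(V,\mathsf G U)=0$ is a strictly stronger hypothesis (it is essentially the extra assumption $\mathsf G(\mathcal U)\subseteq\mathcal Y$ appearing only in Theorem~\ref{thm8}). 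Indeed, if Theorem~\ref{thm4} applied, you would also get $\Delta(\mathcal L)=\mathsf T(\mathcal L)$, i.e.\ every $\left(\begin{smallmatrix}X\\ Y\end{smallmatrix}\right)_f\in\mathrm R(\mathcal U,\mathcal V)$ would have $f$ a \emph{split} monomorphism; but $\mathrm R(\mathcal U,\mathcal V)$ contains $\left(\begin{smallmatrix}U\\ Y\end{smallmatrix}\right)_f$ for any extension $0\to\mathsf G U\xrightarrow{f}Y\to V\to0$ with $U\in\mathcal U,V\in\mathcal V$, and such extensions need not split. So your route proves strictly less than the theorem claims and cannot be completed as stated.

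The paper's proof of (3) avoids Theorem~\ref{thm4} entirely and exploits the product structure of $\mathcal B=\mathcal C\times\mathcal D$. One direction, $\Delta(\mathcal L)^{\perp}\subseteq\mathsf U^{-1}(\mathcal R)$, comes from Proposition~\ref{pro3.10} as you say. For the reverse inclusion, take $((X,Y),(0,f))\in\mathsf U^{-1}(\mathcal R)$ and observe the short exact sequence
\begin{equation*}
0\longrightarrow\mathsf Z(0,Y)\longrightarrow((X,Y),(0,f))\longrightarrow\mathsf Z(X,0)\longrightarrow0
\end{equation*}
in $\mathcal B\ltimes\mathsf F$ (possible precisely because $\mathsf F(0,Y)=0$, so the inclusion of the second coordinate and the projection onto the first are morphisms in the extension category). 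For any $L\in\Delta(\mathcal L)$, Lemma~\ref{Ext1} gives $\mathrm{Ext}^1_{\mathcal B\ltimes\mathsf F}(L,\mathsf Z(0,Y))\cong\mathrm{Ext}^1_{\mathcal B}(\mathsf C(L),(0,Y))=0$ since $\mathsf C(L)\in\mathcal L$ and $(0,Y)\in\mathcal R$, and similarly $\mathrm{Ext}^1_{\mathcal B\ltimes\mathsf F}(L,\mathsf Z(X,0))=0$; since $\Delta(\mathcal L)^{\perp}$ is closed under extensions, $((X,Y),(0,f))\in\Delta(\mathcal L)^{\perp}$. This is the ingredient your proposal is missing: a decomposition of $\mathsf U^{-1}(\mathcal R)$-objects into $\mathsf Z$-images using the product structure, rather than the split-off of $\Delta(\mathcal L)$ that Theorem~\ref{thm4} would provide under its stronger hypothesis.
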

\begin{proof}
	We write $\mathcal{L}=(\mathcal{U,V})$ and $\mathcal{R}=(\mathcal{X,Y})$. 
	
	Since $(\mathcal{U},\;\mathcal{X})$ and $(\mathcal{V},\;\mathcal{Y})$ is cotorsion pairs in $\mathcal{C}$ and $\mathcal{D}$, $(\mathcal{L},\;\mathcal{R})$ is a cotorsion pair in $\mathcal{B}$. We mention that $\mathbb{L}_1\mathsf{F}(\mathcal{L})=(0,\mathbb{L}_1\mathsf{G}(\mathcal{U}))$ and $\mathbb{L}_1\mathsf{F}(\mathcal{R})=(0,\mathbb{L}_1\mathsf{G}(\mathcal{X}))$. Applying Theorem~\ref{thm1} and Theorem~\ref{thm2}, we get (1) and (2), respectively.
	
	(3) We prove that $\mathsf{U}^{-1}(\mathcal{R})=\Delta(\mathcal{L})^\perp.$ Since $\mathbb{L}_1\mathsf{F}(\mathcal{L})=(0,\mathbb{L}_1\mathsf{G}(\mathcal{U}))=0$, it follows from Proposition~\ref{pro3.10} that $\Delta(\mathcal{L})^\perp\subseteq\mathsf{U}^{-1}(\mathcal{R})$.
	
	To show the opposite inclusion, let $((X,Y),(0,f))\in\mathsf{U}^{-1}(\mathcal{R})$. There is an exact sequence
	\[0\longrightarrow((0,Y),(0,0))\xlongrightarrow{(0,\mathsf{Id}_Y)}((X,Y),(0,f))\xlongrightarrow{(\mathsf{Id}_X,0)}((X,0),(0,0))\longrightarrow0\]
	 in $\mathcal{B}$. For any $L\in\Delta(\mathcal{L})$, by Lemma~\ref{Ext1}, we have
	 \[\mathrm{Ext}_{\mathcal{B}\ltimes\mathsf{F}}^1(L,\mathsf{Z}(0,Y))\cong\mathrm{Ext}_\mathcal{B}^1(\mathsf{C}(L),(0,Y))=0,\]
	 since $\mathsf{C}(L)\in\mathcal{L}$ and $(0,Y)\in\mathcal{R}$. Thus, $\mathsf{Z}(0,Y)\in\Delta(\mathcal{L})^\perp$. Similarly, we have $\mathsf{Z}(X,0)\in\Delta(\mathcal{L})^\perp$. Since $\Delta(\mathcal{L})^\perp$ is closed under extensions. Therefore, we get $((X,Y),(0,f))\in\Delta(\mathcal{L})^\perp$. Hence, $\Delta(\mathcal{L})^\perp\supseteq\mathsf{U}^{-1}(\mathcal{R})$. Then we are done.
\end{proof}

\begin{thm}\label{thm8}
	Keep the notation as above. Let $(\mathcal{U},\;\mathcal{X})$ and $(\mathcal{V},\;\mathcal{Y})$ be hereditary complete cotorsion pairs in $\mathcal{C}$ and $\mathcal{D}$, respectively. If $\mathbb{L}_1\mathsf{G}(\mathcal{U})=0$ and $\mathsf{G}(\mathcal{U})\subseteq\mathcal{Y}$, then the following hold.
	\begin{enumerate}[(1)]
		\item $\big(\prescript{\perp}{}{\left(\begin{smallmatrix}
				\mathcal{X}\\ \mathcal{Y}
			\end{smallmatrix}\right)},\;\left(\begin{smallmatrix}
			\mathcal{X}\\ \mathcal{Y}
		\end{smallmatrix}\right)\big)$ is a hereditary complete cotorsion pair in $(\mathsf{G}\downarrow\mathcal{D})$.
		\item We have
		\[\prescript{\perp}{}{\left(\begin{smallmatrix}
				\mathcal{X}\\ \mathcal{Y}
			\end{smallmatrix}\right)}=\left\{\left(\begin{smallmatrix}
			U\\ \mathsf{G}(U)
		\end{smallmatrix}\right)_{1}\oplus\left(\begin{smallmatrix}
			0\\ V
		\end{smallmatrix}\right)_{0}\;\mid\;U\in\mathcal{U},\;V\in\mathcal{V}\right\}.\]
		\item If $\omega_1=\mathcal{U}\cap\mathcal{X}$ and $\omega_2=\mathcal{V}\cap\mathcal{Y}$ are contravariantly finite, then $$\omega=\prescript{\perp}{}{\left(\begin{smallmatrix}
				\mathcal{X}\\ \mathcal{Y}
			\end{smallmatrix}\right)}\cap\left(\begin{smallmatrix}
			\mathcal{X}\\ \mathcal{Y}
		\end{smallmatrix}\right)=\left\{\left(\begin{smallmatrix}
		U\\ \mathsf{G}(U)
		\end{smallmatrix}\right)_{1}\oplus\left(\begin{smallmatrix}
		0\\ V
		\end{smallmatrix}\right)_{0}\;\mid\;U\in\mathcal{U}\cap\mathcal{X},\;V\in\mathcal{V}\cap\mathcal{Y}\right\}$$
		is contravariantly finite, and there is an equivalence of categories $\mathrm{Ho}(\mathcal{M}_\omega)\simeq\mathrm{Ho}(\omega_1)\times\mathrm{Ho}(\omega_2)$.
	\end{enumerate}
\end{thm}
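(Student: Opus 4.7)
The plan is to reduce the theorem to the corresponding statements for the right trivial extension $\mathcal{B}\ltimes\mathsf{F}$ via the isomorphism $\Phi$, and then to apply Corollary~\ref{cor1} and Theorem~\ref{thm10} directly. Set $\mathcal{L}=(\mathcal{U},\mathcal{V})$ and $\mathcal{R}=(\mathcal{X},\mathcal{Y})$, regarded as classes of objects of $\mathcal{B}=\mathcal{C}\times\mathcal{D}$. Since cotorsion pairs in a product of abelian categories decompose as products, $(\mathcal{L},\mathcal{R})$ is a hereditary complete cotorsion pair in $\mathcal{B}$. With $\mathsf{F}(X,Y)=(0,\mathsf{G}X)$, the hypotheses transfer as $\mathbb{L}_1\mathsf{F}(\mathcal{L})=(0,\mathbb{L}_1\mathsf{G}(\mathcal{U}))=0$ and $\mathsf{F}(\mathcal{L})=(0,\mathsf{G}(\mathcal{U}))\subseteq(0,\mathcal{Y})\subseteq\mathcal{R}$, so Corollary~\ref{cor1} and Theorem~\ref{thm10} apply.

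For parts~(1) and~(2), Corollary~\ref{cor1} yields that $(\mathsf{T}(\mathcal{L}),\mathsf{U}^{-1}(\mathcal{R}))$ is a hereditary complete cotorsion pair in $\mathcal{B}\ltimes\mathsf{F}$. Since $\Phi$ sends $\left(\begin{smallmatrix}\mathcal{X}\\ \mathcal{Y}\end{smallmatrix}\right)$ to $\mathsf{U}^{-1}(\mathcal{R})$, this gives~(1). For the explicit description in~(2), I would compute $\mathsf{T}(U,V)$ for $(U,V)\in\mathcal{L}$ directly: with $\mathsf{F}^2=0$ and $\eta=0$, its underlying object is $(U,V\oplus\mathsf{G}(U))$ and the structure map is the canonical inclusion of $\mathsf{G}(U)$ into the second $\mathcal{D}$-summand. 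Translating through $\Phi^{-1}$ and reordering the direct summands in the $\mathcal{D}$-component, this object is isomorphic in $(\mathsf{G}\downarrow\mathcal{D})$ to $\left(\begin{smallmatrix}U\\ \mathsf{G}(U)\end{smallmatrix}\right)_{1}\oplus\left(\begin{smallmatrix}0\\ V\end{smallmatrix}\right)_{0}$, which identifies $\prescript{\perp}{}{\left(\begin{smallmatrix}\mathcal{X}\\ \mathcal{Y}\end{smallmatrix}\right)}=\Phi^{-1}(\mathsf{T}(\mathcal{L}))$ with the class stated in~(2).

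For part~(3), the first step is to observe that $\omega_\mathcal{B}:=\mathcal{L}\cap\mathcal{R}=\omega_1\times\omega_2$ is contravariantly finite in $\mathcal{B}$; this is immediate, because componentwise right $\omega_1$- and $\omega_2$-approximations combine to a right $\omega_\mathcal{B}$-approximation. Theorem~\ref{thm10} then gives contravariant finiteness of $\mathsf{T}(\omega_\mathcal{B})$ in $\mathcal{B}\ltimes\mathsf{F}$ together with an equivalence $\mathrm{Ho}(\mathcal{M}_{\omega_\mathcal{B}})\simeq\mathrm{Ho}(\mathcal{M}_{\mathsf{T}(\omega_\mathcal{B})})$. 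The same calculation as in~(2), now restricted to $(U,V)\in\omega_1\times\omega_2$, identifies $\Phi^{-1}(\mathsf{T}(\omega_\mathcal{B}))$ with $\omega$, so $\omega$ is contravariantly finite. Finally, Theorem~\ref{corr2} identifies $\mathrm{Ho}(\mathcal{M}_{\omega_\mathcal{B}})$ with the additive quotient $\mathcal{L}/\omega_\mathcal{B}=(\mathcal{U}\times\mathcal{V})/(\omega_1\times\omega_2)$, and the obvious componentwise equivalence $(\mathcal{U}\times\mathcal{V})/(\omega_1\times\omega_2)\simeq(\mathcal{U}/\omega_1)\times(\mathcal{V}/\omega_2)$ yields the desired $\mathrm{Ho}(\mathcal{M}_\omega)\simeq\mathrm{Ho}(\omega_1)\times\mathrm{Ho}(\omega_2)$.

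The main obstacle I anticipate is the bookkeeping in translating $\mathsf{T}$ across $\Phi$ and verifying the explicit direct summand decomposition in~(2); once that concrete identification is in hand, everything else is a routine application of the results already established in Section~\ref{sec3}.
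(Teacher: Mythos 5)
Your proposal is correct and follows essentially the same route as the paper: both reduce to the product cotorsion pair $(\mathcal{L},\mathcal{R})$ in $\mathcal{B}=\mathcal{C}\times\mathcal{D}$, verify $\mathbb{L}_1\mathsf{F}(\mathcal{L})=0$ and $\mathsf{F}(\mathcal{L})\subseteq\mathcal{R}$, invoke Theorem~\ref{thm4}/Corollary~\ref{cor1} to identify $\prescript{\perp}{}{\mathsf{U}^{-1}(\mathcal{R})}=\mathsf{T}(\mathcal{L})$ and obtain completeness, and cite Theorem~\ref{thm10} for part (3). You spell out the translation of $\mathsf{T}$ across $\Phi$ and the componentwise decomposition of the stable quotient in somewhat more detail than the paper does, but the underlying argument is the same.
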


\begin{proof}
	We write $\mathcal{L}=(\mathcal{U,V})$ and $\mathcal{R}=(\mathcal{X,Y})$. Then $(\mathcal{L},\mathcal{R})$ is a hereditary complete cotorsion pair in $\mathcal{B}$.
	
	 Since $\mathbb{L}_1\mathsf{F}(\mathcal{L})=(0,\mathbb{L}_1\mathsf{G}(\mathcal{U}))=0$ and $\mathsf{F}(\mathcal{L})=(0,\mathsf{G}(\mathcal{U}))\subseteq\mathcal{R}$, it follows from Theorem~\ref{thm4} that $\prescript{\perp}{}{\mathsf{U}^{-1}(\mathcal{R})}=\mathsf{T}(\mathcal{L})$. Therefore, we have
	\[\prescript{\perp}{}{\left(\begin{smallmatrix}
			\mathcal{X}\\ \mathcal{Y}
		\end{smallmatrix}\right)}=\left\{\left(\begin{smallmatrix}
		U\\ \mathsf{G}(U)
	\end{smallmatrix}\right)_{1}\oplus\left(\begin{smallmatrix}
		0\\ V
	\end{smallmatrix}\right)_{0}\;\mid\;U\in\mathcal{U},\;V\in\mathcal{V}\right\}.\]
	By Corollary~\ref{cor1}, $(\mathsf{T}(\mathcal{L}),\mathsf{U}^{-1}(\mathcal{R}))$ is a hereditary complete cotorsion pair in $\mathcal{B}\ltimes\mathsf{F}$. Therefore, $\big(\prescript{\perp}{}{\left(\begin{smallmatrix}
			\mathcal{X}\\ \mathcal{Y}
		\end{smallmatrix}\right)},\;\left(\begin{smallmatrix}
		\mathcal{X}\\ \mathcal{Y}
	\end{smallmatrix}\right)\big)$ is a hereditary complete cotorsion pair in $(\mathsf{G}\downarrow\mathcal{D})$. For (3), it follows from Theorem~\ref{thm10}.
\end{proof}

\begin{thm}
	Keep the notation as above. Let $\mathcal{M}_\mathcal{C}=(\mathcal{U},\mathcal{X},\mathcal{W}_1)$ and $\mathcal{M}_\mathcal{D}=(\mathcal{V},\mathcal{Y},\mathcal{W}_2)$ be hereditary  Hovey triples in $\mathcal{C}$ and $\mathcal{D}$, respectively. If $\mathbb{L}_1\mathsf{G}(\mathcal{U})=0$ and $\mathsf{G}(\mathcal{U})\subseteq\mathcal{Y}\cap\mathcal{W}_2$, then 
	$$
	\mathcal{M}=(\mathcal{U}',\;\left(\begin{smallmatrix}
		\mathcal{X}\\ \mathcal{Y}
	\end{smallmatrix}\right),\;\left(\begin{smallmatrix}
	\mathcal{W}_1\\ \mathcal{W}_2
	\end{smallmatrix}\right))
	$$ is a hereditary Hovey triple in $(\mathsf{G}\downarrow\mathcal{D})$ and there is a triangle equivalence $\mathrm{Ho}(\mathcal{M})\simeq\mathrm{Ho}(\mathcal{M}_\mathcal{C})\times\mathrm{Ho}(\mathcal{M}_\mathcal{D})$. Here, $\mathcal{U}'=\left\{\big(\begin{smallmatrix}
		U\\\mathsf{G}(U)
	\end{smallmatrix}\big)_{1}\oplus\left(\begin{smallmatrix}
	0\\ V
	\end{smallmatrix}\right)_{0}\;\mid\;U\in\mathcal{U},\;V\in\mathcal{V}\right\}$.
\end{thm}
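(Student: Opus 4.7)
The plan is to reduce the theorem to Theorem~\ref{thm9} via the standard identification $(\mathsf{G}\downarrow\mathcal{D})\cong\mathcal{B}\ltimes\mathsf{F}$ given by $\Phi$, where $\mathcal{B}=\mathcal{C}\times\mathcal{D}$ and $\mathsf{F}(X,Y)=(0,\mathsf{G}X)$. Write $\mathcal{L}=(\mathcal{U},\mathcal{V})$, $\mathcal{R}=(\mathcal{X},\mathcal{Y})$ and $\mathcal{W}=(\mathcal{W}_1,\mathcal{W}_2)$, viewed as classes in $\mathcal{B}$. First, I would check that $\mathcal{M}_{\mathcal{B}}=(\mathcal{L},\mathcal{R},\mathcal{W})$ is a hereditary Hovey triple in $\mathcal{B}$: since cotorsion pairs, hereditariness, completeness, and thickness are all testable componentwise, the product of the two Hovey triples is again a hereditary Hovey triple.

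Next, I would verify the hypotheses of Theorem~\ref{thm9} for $\mathcal{M}_{\mathcal{B}}$. The functor $\mathsf{F}$ satisfies $\mathsf{F}^2=0$, and $\mathbb{L}_1\mathsf{F}(\mathcal{L})=(0,\mathbb{L}_1\mathsf{G}(\mathcal{U}))=0$ by assumption. Moreover, $\mathsf{F}(\mathcal{L})=(0,\mathsf{G}(\mathcal{U}))\subseteq(\mathcal{X}\cap\mathcal{W}_1,\mathcal{Y}\cap\mathcal{W}_2)=\mathcal{R}\cap\mathcal{W}$, using that $0\in\mathcal{X}\cap\mathcal{W}_1$ and $\mathsf{G}(\mathcal{U})\subseteq\mathcal{Y}\cap\mathcal{W}_2$. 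Hence Theorem~\ref{thm9} yields that $(\mathsf{T}(\mathcal{L}),\mathsf{U}^{-1}(\mathcal{R}),\mathsf{U}^{-1}(\mathcal{W}))$ is a hereditary Hovey triple in $\mathcal{B}\ltimes\mathsf{F}$, together with a triangle equivalence $\mathrm{Ho}(\mathcal{M}_{\mathcal{B}\ltimes\mathsf{F}})\simeq\mathrm{Ho}(\mathcal{M}_{\mathcal{B}})$.

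Then I would translate the three classes through $\Phi$. The identifications $\Phi(\mathsf{U}^{-1}(\mathcal{R}))=\bigl(\begin{smallmatrix}\mathcal{X}\\\mathcal{Y}\end{smallmatrix}\bigr)$ and $\Phi(\mathsf{U}^{-1}(\mathcal{W}))=\bigl(\begin{smallmatrix}\mathcal{W}_1\\\mathcal{W}_2\end{smallmatrix}\bigr)$ are immediate from the definition of $\mathsf{U}$. For the cofibrant class, a short direct computation shows that for $U\in\mathcal{U}$, $V\in\mathcal{V}$ one has $\Phi(\mathsf{T}(U,V))\cong\bigl(\begin{smallmatrix}U\\\mathsf{G}(U)\end{smallmatrix}\bigr)_{1}\oplus\bigl(\begin{smallmatrix}0\\V\end{smallmatrix}\bigr)_{0}$, because $\mathsf{T}(U,V)$ has underlying object $(U,V\oplus\mathsf{G}U)$ with structure map $\binom{0}{1}:\mathsf{G}(U)\to V\oplus\mathsf{G}(U)$; hence $\Phi(\mathsf{T}(\mathcal{L}))=\mathcal{U}'$. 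Combining these three identifications produces the claimed hereditary Hovey triple $\mathcal{M}$ in $(\mathsf{G}\downarrow\mathcal{D})$.

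Finally, it remains to identify $\mathrm{Ho}(\mathcal{M}_{\mathcal{B}})$ with $\mathrm{Ho}(\mathcal{M}_{\mathcal{C}})\times\mathrm{Ho}(\mathcal{M}_{\mathcal{D}})$. By Theorem~\ref{te}(2), the homotopy category of $\mathcal{M}_{\mathcal{B}}$ is the stable category $(\mathcal{L}\cap\mathcal{R})/(\mathcal{L}\cap\mathcal{R}\cap\mathcal{W})$, which equals $\bigl((\mathcal{U}\cap\mathcal{X})\times(\mathcal{V}\cap\mathcal{Y})\bigr)/\bigl((\mathcal{U}\cap\mathcal{X}\cap\mathcal{W}_1)\times(\mathcal{V}\cap\mathcal{Y}\cap\mathcal{W}_2)\bigr)$; this factors as the product of the two stable categories, which are precisely $\mathrm{Ho}(\mathcal{M}_{\mathcal{C}})$ and $\mathrm{Ho}(\mathcal{M}_{\mathcal{D}})$. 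Composing with the triangle equivalence obtained from Theorem~\ref{thm9} gives the desired $\mathrm{Ho}(\mathcal{M})\simeq\mathrm{Ho}(\mathcal{M}_{\mathcal{C}})\times\mathrm{Ho}(\mathcal{M}_{\mathcal{D}})$. The main potential obstacle is the bookkeeping in Step~3 to confirm that $\Phi\circ\mathsf{T}$ really lands in the subcategory $\mathcal{U}'$ and that the identification is compatible with direct sums; everything else is a routine application of the previously established machinery.
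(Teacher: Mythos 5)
Your proposal is correct and follows the same route as the paper: form the product Hovey triple $(\mathcal{L},\mathcal{R},\mathcal{W})$ on $\mathcal{B}=\mathcal{C}\times\mathcal{D}$, check $\mathbb{L}_1\mathsf{F}(\mathcal{L})=0$ and $\mathsf{F}(\mathcal{L})\subseteq\mathcal{R}\cap\mathcal{W}$, and apply Theorem~\ref{thm9}, then transport the classes through $\Phi$. The paper states this one-line reduction without verification; you have simply made explicit the componentwise checks, the identification $\Phi(\mathsf{T}(\mathcal{L}))=\mathcal{U}'$, and the factorization of the stable category, all of which are routine and correct.
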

\begin{proof}
	Let $\mathcal{L}=(\mathcal{U,V})$, $\mathcal{F}=(\mathcal{X,Y})$ and $\mathcal{W}=(\mathcal{W}_1,\mathcal{W}_2)$. Then $(\mathcal{L},\;\mathcal{F},\;\mathcal{W})$ is a hereditary complete Hovey triple in $\mathcal{B}$. The results follow from Theorem~\ref{thm9}.
\end{proof}

\subsection{Examples}
\begin{exm}[Morita context rings]

Let $A$ and $B$ be rings, $M$ a $B$-$A$-bimodule and $N$ an $A$-$B$-bimodule. A Morita context ring \cite{Bass} with zero bimodule homomorphisms is $\Lambda:=\begin{pmatrix}
	A&N\\ M&B
\end{pmatrix}$ with componentwise addition, and multiplication
\[\begin{pmatrix}
	a_1&n_1\\
	m_1&b_1
\end{pmatrix}\begin{pmatrix}
a_2&n_2\\ m_2&b_2
\end{pmatrix}=\begin{pmatrix}
a_1a_2&a_1n_2+n_1b_2\\ m_1a_2+b_1m_2&b_1b_2
\end{pmatrix}.\]
\end{exm}

Let $\mathcal{B}=(A\text{-}\mathrm{Mod})\times(B\text{-}\mathrm{Mod})$, $\mathsf{F}:\mathcal{B}\rightarrow\mathcal{B},\;(X,Y)\mapsto(N\otimes_B Y,\;M\otimes_A X)$, and $\mathsf{G}:\mathcal{B}\rightarrow\mathcal{B},\;(X,Y)\mapsto(\mathrm{Hom}_B(M,Y),\;\mathrm{Hom}_A(N,X))$. Then there is the following expressions of module categories of Morita context rings; see \cite[Theorem~1.5]{Gr}.

\begin{pro}
	There are isomorphisms of categories $\Lambda$-$\mathrm{Mod}\cong\mathcal{B}\ltimes\mathsf{F}\cong\mathsf{G}\rtimes\mathcal{B}$.
\end{pro}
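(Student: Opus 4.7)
The plan is to prove the first isomorphism $\Lambda\text{-}\mathrm{Mod}\cong\mathcal{B}\ltimes\mathsf{F}$ by hand via the standard Peirce decomposition, and then deduce the second isomorphism $\mathcal{B}\ltimes\mathsf{F}\cong\mathsf{G}\rtimes\mathcal{B}$ as a formal consequence of Proposition~\ref{pro3}(4).

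For the first isomorphism, consider the orthogonal idempotents $e_1=\bigl(\begin{smallmatrix}1&0\\0&0\end{smallmatrix}\bigr)$ and $e_2=\bigl(\begin{smallmatrix}0&0\\0&1\end{smallmatrix}\bigr)$ with $e_1+e_2=1_\Lambda$. Given a $\Lambda$-module $L$, set $X:=e_1L$ and $Y:=e_2L$. Then $X$ becomes a left $A$-module and $Y$ becomes a left $B$-module via the diagonal copies of $A,B$ in $\Lambda$, and the off-diagonal matrix units provide $A$-balanced and $B$-balanced maps $f_1\colon N\otimes_B Y\to X$ and $f_2\colon M\otimes_A X\to Y$. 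Define the functor $\Psi\colon\Lambda\text{-}\mathrm{Mod}\to\mathcal{B}\ltimes\mathsf{F}$ by $L\mapsto((X,Y),(f_1,f_2))$ and $\varphi\mapsto(e_1\varphi|_X,e_2\varphi|_Y)$. The key point is that the hypothesis that the bimodule homomorphisms of the Morita context vanish is exactly the identity $\bigl(\begin{smallmatrix}0&n\\0&0\end{smallmatrix}\bigr)\bigl(\begin{smallmatrix}0&0\\m&0\end{smallmatrix}\bigr)=0=\bigl(\begin{smallmatrix}0&0\\m&0\end{smallmatrix}\bigr)\bigl(\begin{smallmatrix}0&n\\0&0\end{smallmatrix}\bigr)$ in $\Lambda$, which translates into $f_1\circ(\mathrm{Id}_N\otimes f_2)=0$ and $f_2\circ(\mathrm{Id}_M\otimes f_1)=0$. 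Since $\mathsf{F}(f)=(\mathrm{Id}_N\otimes f_2,\mathrm{Id}_M\otimes f_1)$ and $\eta=0$, this is precisely the defining compatibility $f\circ\mathsf{F}(f)=0$ in $\mathcal{B}\ltimes\mathsf{F}$.

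For the inverse, given $((X,Y),(f_1,f_2))\in\mathcal{B}\ltimes\mathsf{F}$, put $L:=X\oplus Y$ and define
\[\begin{pmatrix}a&n\\m&b\end{pmatrix}\cdot\begin{pmatrix}x\\y\end{pmatrix}:=\begin{pmatrix}ax+f_1(n\otimes y)\\f_2(m\otimes x)+by\end{pmatrix}.\]
A direct check using $f_1\circ(\mathrm{Id}_N\otimes f_2)=0$ and $f_2\circ(\mathrm{Id}_M\otimes f_1)=0$ shows that this is associative; the remaining axioms are straightforward. The two constructions are mutually inverse on objects and morphisms, giving an isomorphism of categories.

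For the second isomorphism, observe that $\mathsf{F}=(N\otimes_B-,\,M\otimes_A-)$ has the right adjoint $\mathsf{G}=(\mathrm{Hom}_B(M,-),\,\mathrm{Hom}_A(N,-))$ by the standard tensor-hom adjunction applied componentwise. Since the natural transformation $\eta$ is taken to be zero for the right trivial extension, Proposition~\ref{pro3}(2) gives that the corresponding $\zeta$ is also zero, and Proposition~\ref{pro3}(4) then yields the isomorphism of categories $\mathcal{B}\ltimes\mathsf{F}\xrightarrow{\sim}\mathsf{G}\rtimes\mathcal{B}$. The only genuine verification is the compatibility condition in the first isomorphism, which is the bookkeeping identifying the zero bimodule homomorphism hypothesis with $\mathsf{F}^2=0$-style relations; everything else is routine.
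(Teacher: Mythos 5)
Your proof is correct, but note that the paper does not actually prove this proposition; it simply cites it to Green's 1982 paper \cite[Theorem~1.5]{Gr}. Your argument therefore supplies a self-contained proof of a result the paper takes as known, and the route you take is the standard one: the Peirce decomposition along $e_1,e_2$ gives the functor $\Lambda\text{-}\mathrm{Mod}\to\mathcal{B}\ltimes\mathsf{F}$, the zero-bimodule-homomorphism hypothesis on $\Lambda$ is exactly what makes the relations $f_1\circ(\mathrm{Id}_N\otimes f_2)=0$ and $f_2\circ(\mathrm{Id}_M\otimes f_1)=0$ hold (which is $f\circ\mathsf{F}(f)=0$ since $\eta=0$), and Proposition~\ref{pro3}(2)(4) then transfers the construction across the tensor-hom adjunction $(\mathsf{F},\mathsf{G})$ to produce $\mathcal{B}\ltimes\mathsf{F}\cong\mathsf{G}\rtimes\mathcal{B}$. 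The indexing in $\mathsf{F}(f)=(\mathrm{Id}_N\otimes f_2,\mathrm{Id}_M\otimes f_1)$ (with the cross-over coming from $\mathsf{F}(X,Y)=(N\otimes_B Y,M\otimes_A X)$) is handled correctly, and the associativity check for the inverse functor indeed comes down to exactly the two vanishing identities you isolated. The only cosmetic point worth flagging is that your inverse builds $L:=X\oplus Y$ externally; to get a strict isomorphism of categories (rather than an equivalence) one implicitly uses the internal decomposition $L=e_1L\oplus e_2L$, which is the standard convention. Otherwise the argument is complete and matches what one would find in Green.
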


For an object $((X,Y),(f,g))\in\mathcal{B}\ltimes\mathsf{F}$, we write $\big(\begin{smallmatrix}
	X\\ Y
\end{smallmatrix}\big)_{f,g}$ as a $\Lambda$-module.

The cotorsion pairs and model structures on Morita context rings have been studied in \cite{CRZ}. By applying Theorems~\ref{thm1} and~\ref{thm11}, we obtain \cite[Theorem~1.1]{CRZ}. We mention that $\mathbb{L}_i\mathsf{F}=(\mathrm{Tor}_i^A(N,-),\;\mathrm{Tor}_i^B(M,-))$ and that $\mathsf{F}^2=0$ if and only if $M\otimes _AN=0=N\otimes_BM$.  Then Theorems~\ref{thm2} and~\ref{thm12}  recover \cite[Theorem~1.2]{CRZ}.  Applying Theorem~\ref{thm5} and Corollary~\ref{cor1}, we have the following result; compare \cite[Theorem~1.5]{CRZ}.

\begin{thm}
	Let $\Lambda=\begin{pmatrix}
		A&N\\ M&B
	\end{pmatrix}$ be a Morita context ring. Let $(\mathcal{U,X})$ and $(\mathcal{V,Y})$ be hereditary complete cotorsion pairs in $A$-$\mathrm{Mod}$ and $B$-$\mathrm{Mod}$, respectively. Assume that $\mathrm{Tor}_1^A(M,\mathcal{U})=0=\mathrm{Tor}_1^B(N,\mathcal{V})$. If $M\otimes_A \mathcal{U}\subseteq\mathcal{Y}$ and $N\otimes_B\mathcal{V}\subseteq\mathcal{X}$, then the following hold.
	\begin{enumerate}[(1)]
		\item $\big(\prescript{\perp}{}{\left(\begin{smallmatrix}\mathcal X\\ \mathcal Y\end{smallmatrix}\right)},\left(\begin{smallmatrix}\mathcal X\\ \mathcal Y\end{smallmatrix}\right)\big)$ is a hereditary complete cotorsion pair.
		\item If  $M\otimes_AN=0=N\otimes_BM$, then $(\mathsf{T}_A(\mathcal{U})\oplus\mathsf{T}_B(\mathcal{V}),\big(\begin{smallmatrix}
			\mathcal{X}\\ \mathcal{Y}
		\end{smallmatrix}\big))$ is a hereditary complete cotorsion pair in $\Lambda$-$\mathrm{Mod}$.
	\end{enumerate}
	Here, the functor $\mathsf{T}_A: A\text{-}\mathrm{Mod}\rightarrow\Lambda\text{-}\mathrm{Mod}$ is given by $X\mapsto\big(\begin{smallmatrix}
	X\\ M\otimes_A X
	\end{smallmatrix}\big)_{0,1}$, the functor $\mathsf{T}_B:B\text{-}\mathrm{Mod}\rightarrow\Lambda\text{-}\mathrm{Mod}$ is given by $Y\mapsto\big(\begin{smallmatrix}
	N\otimes_BY\\ Y
	\end{smallmatrix}\big)_{1,0}$, and 
	$$
	\left(\begin{smallmatrix}\mathcal X\\ \mathcal Y\end{smallmatrix}\right)=  \ \{\left(\begin{smallmatrix} X \\ Y\end{smallmatrix}\right)_{f,g}\in\Lambda\text{-}\mathrm{Mod} \ \mid \   X\in \mathcal X, \ \ Y\in\mathcal Y\}.
	$$
\end{thm}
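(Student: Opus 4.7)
The plan is to transport the statement through the category isomorphism $\Lambda\text{-}\mathrm{Mod}\cong\mathcal{B}\ltimes\mathsf{F}$ recalled just before the theorem, where $\mathcal{B}=(A\text{-}\mathrm{Mod})\times(B\text{-}\mathrm{Mod})$ and $\mathsf{F}(X,Y)=(N\otimes_B Y,\,M\otimes_A X)$, and then invoke Theorem~\ref{thm5} and Corollary~\ref{cor1}. The input cotorsion pair on $\mathcal B$ will be the componentwise product $(\mathcal L,\mathcal R):=((\mathcal U,\mathcal V),(\mathcal X,\mathcal Y))$; because $\mathrm{Ext}^1$, kernels and cokernels in a product of abelian categories are all computed componentwise, this product pair is automatically a hereditary complete cotorsion pair in $\mathcal B$ whenever the two factor pairs are, and the induced approximation sequences are obtained simply by taking approximations in each factor.

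For part~(1), I first observe that the derived functors of $\mathsf F$ split componentwise: $\mathbb L_1\mathsf F(U,V)=(\mathrm{Tor}_1^B(N,V),\,\mathrm{Tor}_1^A(M,U))$, which vanishes on $\mathcal L$ by the Tor hypotheses, and $\mathsf F(U,V)=(N\otimes_B V,\,M\otimes_A U)\in(\mathcal X,\mathcal Y)=\mathcal R$ by the two containment hypotheses. Theorem~\ref{thm5} then yields at once that $(\prescript{\perp}{}{\mathsf U^{-1}(\mathcal R)},\mathsf U^{-1}(\mathcal R))$ is a hereditary complete cotorsion pair in $\mathcal B\ltimes\mathsf F$. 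Under the isomorphism with $\Lambda\text{-}\mathrm{Mod}$, an object $\big(\begin{smallmatrix}X\\ Y\end{smallmatrix}\big)_{f,g}$ corresponds to $((X,Y),(f,g))$, so $\mathsf U^{-1}(\mathcal R)$ is exactly the class $\big(\begin{smallmatrix}\mathcal X\\ \mathcal Y\end{smallmatrix}\big)$, and (1) follows.

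For part~(2), the additional hypothesis $M\otimes_A N=0=N\otimes_B M$ gives
\[
\mathsf F^2(X,Y)=\mathsf F(N\otimes_B Y,\,M\otimes_A X)=(N\otimes_B M\otimes_A X,\;M\otimes_A N\otimes_B Y)=(0,0),
\]
so $\mathsf F^2=0$, and Corollary~\ref{cor1} applies to give the hereditary complete cotorsion pair $(\mathsf T(\mathcal L),\mathsf U^{-1}(\mathcal R))$. The remaining task is to identify $\mathsf T(\mathcal L)$ with $\mathsf T_A(\mathcal U)\oplus\mathsf T_B(\mathcal V)$. Since $\mathsf T$ is additive and every $(U,V)\in\mathcal L$ splits as $(U,0)\oplus(0,V)$, one has $\mathsf T(U,V)\cong\mathsf T(U,0)\oplus\mathsf T(0,V)$; unpacking the definition of $\mathsf T$ (with $\eta=0$) in the Morita-context notation yields $\mathsf T(U,0)=\big(\begin{smallmatrix}U\\ M\otimes_A U\end{smallmatrix}\big)_{0,1}=\mathsf T_A(U)$ and $\mathsf T(0,V)=\big(\begin{smallmatrix}N\otimes_B V\\ V\end{smallmatrix}\big)_{1,0}=\mathsf T_B(V)$, as required.

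The main obstacle is not conceptual but notational: one has to check carefully that the componentwise decompositions of $\mathsf F$, $\mathbb L_1\mathsf F$ and $\mathsf T$ behave as expected on $\mathcal B=(A\text{-}\mathrm{Mod})\times(B\text{-}\mathrm{Mod})$, and that the classes $\mathsf U^{-1}(\mathcal R)$ and $\mathsf T(\mathcal L)$ match the explicit Morita-context descriptions in the statement. Once this bookkeeping is done, the two parts are immediate consequences of Theorem~\ref{thm5} and Corollary~\ref{cor1}, respectively.
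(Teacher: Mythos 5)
Your proposal is correct and follows exactly the route the paper intends: form the componentwise cotorsion pair $(\mathcal L,\mathcal R)=((\mathcal U,\mathcal V),(\mathcal X,\mathcal Y))$ on $\mathcal B=(A\text{-}\mathrm{Mod})\times(B\text{-}\mathrm{Mod})$, verify $\mathbb L_1\mathsf F(\mathcal L)=0$ and $\mathsf F(\mathcal L)\subseteq\mathcal R$ from the Tor and containment hypotheses, and apply Theorem~\ref{thm5} (resp.\ Corollary~\ref{cor1}, after checking $\mathsf F^2=0$) to obtain (1) (resp.\ (2)), with the bookkeeping identification $\mathsf T(U,V)\cong\mathsf T_A(U)\oplus\mathsf T_B(V)$. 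This matches the paper's (one-line) proof, which simply cites Theorem~\ref{thm5} and Corollary~\ref{cor1}.
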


Similarly, applying Theorems~\ref{thm9} and~\ref{thm10}, one has the corresponding results; compare \cite[Theorem~1.6]{CRZ}.

\begin{exm}[Trivial extensions of rings]
\emph{	Let $R$ be a ring and $M$ an $R$-$R$-bimodule. The \emph{trivial extension} of $R$ by $M$, denoted by $R\ltimes M$, is the ring $R\times M$ with componentwise addition, and  multiplication given by $(r_1,m_1)(r_2,m_2)=(r_1r_2,r_1m_2+m_1r_2)$. Let $\mathcal{B}=R\text{-}\mathrm{Mod}$, $\mathsf{F}=M\otimes_R-$ and $\mathsf{G}=\mathrm{Hom}_R(M,-)$. Then the category $(R\ltimes M)\text{-}\mathrm{Mod}$ is isomorphic to $\mathcal{B}\ltimes\mathsf{F}$ and is isomorphic to $\mathsf{G}\rtimes\mathcal{B}$; see \cite[Proposition~1.13]{FPR}. We mention that Theorems~\ref{thm1} and~\ref{thm11} recover \cite[Theorem~4.2~(1) and ~(2)]{Mao2}. It is of interest to compare Theorem~\ref{thm5} and \cite[Theorem~5.3]{Mao2}.}
\end{exm}

	\vskip 10pt
	
\noindent {\bf Acknowledgements.}\quad The author thanks his supervisor Professor Xiao-Wu Chen for his guidance, thanks Professor Jiangsheng Hu for many helpful suggestions and thanks Taolue Long many helpful comments.
	
	\frenchspacing

	\vskip 10pt
	{\footnotesize \noindent Dongdong Hu\\
		School of Mathematical Sciences, University of Science and Technology of China, Hefei 230026, Anhui, PR China}
	
\end{CJK}

\begin{thebibliography}{999}
		

	\bibitem{AA}K.D. Akinci and R. Alizade, {\em Special precovers in cotorsion theories}, Proc. Edinb. Math. Soc. \textbf{45} (2) (2002), 411-420.
		
		\bibitem{Bass}H. Bass, {\em The Morita Theorems}, Mimeographed Notes, University of Oregon, 1962.
		
		\bibitem{B}A. Beligiannis, {\em Cleft extensions of abelian categories and applications to ring theory}, Comm.
		Algebra \textbf{28} (2000), 4503–4546.
		
		\bibitem{B2}A. Beligiannis, {\em On the Relative Homology of Cleft Extensions of Rings and Abelian Categories}, J. Pure Appl. Algebra \textbf{150} (2000), 237-299.
		
		
		\bibitem{BR}A. Beligiannis and I. Reiten, {\em Homological and homotopical aspects of torsion theories}, Mem. Amer. Math. Soc. \textbf{188} (2007), 1-207.
		
		\bibitem{CLZ}J. Cui, X.S. Lu and P. Zhang, {\em Model structure from one hereditary complete cotorsion pair}, J. Pure Appl. Algebra \textbf{229} (2025), No. 107958. 
		
		\bibitem{CRZ}J. Cui, S. Rong and P. Zhang, {\em Cotorsion pairs and model structures on Morita rings}, J. Algebra \textbf{661} (2025), 1-81.
		
		\bibitem{D} S.E. Dickson, {\em A torsion theory for abelian categories}, Trans. Amer. Math. Soc. \textbf{121} (1) (1966), 223–235.
		
		\bibitem{DPZ} R.M. Zhu, Y.Y. Peng and N.Q. Ding, {\em Recollements associated to cotorsion pairs over upper triangular matrix rings}, Publ. Math. Debrecen  \textbf{98} (2021), 83-113.
		
		\bibitem{DLLX} Z.X. Di, L.P. Li, L. Liang and F. Xu, {\em Completeness of induced cotorsion pairs in representation categories of rooted quivers}, Acta Math. Sin. (Engl. Ser.) \textbf{40} (2024), 2436-2452.
		
		\bibitem{EPZ} S. Estrada, M.A. P\'{e}rez and H.Y. Zhu, {\em Balanced pairs, cotorsion triplets and quiver representations}, Proc. Edinb. Math. Soc. \textbf{63} (2) (2020), 67-90.
		
		\bibitem{EJ} E.E. Enochs and O.M.G. Jenda, {\em Relative Homological Algebra}, Berlin, New York: De Gruyter, 2000.
		
		\bibitem{FPR}R. Fossum, P. Griffith and I. Reiten, {\em Trivial Extensions of Abelian Categories, Homological Algebra of Trivial Extensions of Abelian Categories with Applications to Ring Theory}, Lect. Notes in Math. \textbf{456} Springer-Verlag, Berlin, 1975.
		
		\bibitem{G}J.R. García-Rozas, {\em Covers and Envelopes in the Category of Complexes of Modules}, Res. Notes Math. \textbf{407} Chapman \& Hall/CRC, Boca Raton, FL, 1999.
		
		\bibitem{GJ} R. G\"{o}bel and J. Trlifaj, {\em Approximations and Endomorphism Algebras of Modules}, Berlin-New York: Walter de Gruyter, 2006.
		
	%	\bibitem{GPS}E.L. Green, C. Psaroudakis and {\O}. Solberg, {\em Reduction techniques for the finitistic dimension}, Trans. Amer. Math. Soc. \textbf{374} (2021), 6839–6879.
		
		\bibitem{Gr} E.L. Green, {\em On the representation theory of rings in matrix form}, Pac. J. Math. \textbf{100} (1) (1982), 123-138.
		
		\bibitem{Gi} J.Gillespie, {\em Hereditary abelian model categories}, Bull. Lond. Math. Soc. \textbf{48} (6) (2016) 895-922.
		
		\bibitem{HLXZ} W. Hu, X.H. Luo, B.X. Xiong and G.D. Zhou, {\em Gorenstein projective bimodules via monomorphism categories and filtration categories}, J. Pure Appl. Algebra \textbf{223} (3) (2019), 1014-1039.
		
		\bibitem{HJ}H. Holm and P. J{\o}rgensen, {\em Cotorsion pairs in categories of quiver representations}, Kyoto J. Math. \textbf{59} (2019), 575-606.
		
		\bibitem{HW} Y. Yuan, J. He and D. Wu, {\em Cotorsion pairs in comma categories}, Czechoslovak Math. J. \textbf{74} (149) (2024), 715-734.
		
	\bibitem{HZ}	J. Hu and H. Zhu, {\em Special precovering classes in comma categories}, Sci. China, Math. \textbf{65}
		(2022), 933–950.
		
	\bibitem{H}M. Hovey, {\em Cotorsion pairs, model category structures, and representation theory}, Math. Z. \textbf{241} (3) (2002), 553-592.
		
	\bibitem{K}P. Kostas, {\em Cleft extensions of rings and singularity categories}, 2024, arXiv: 2409.07919.
		
		\bibitem{LZ} Z.W. Li and P. Zhang, {\em A construction of Gorenstein-projective modules}, J. Algebra \textbf{323} (6) (2010), 1802-1812.
		
		\bibitem{Luo-Z} X.H. Luo and P. Zhang, {\em Separated monic representations I: Gorenstein-projective modules}, J. Algebra \textbf{479} (2017), 1-34.
		
		\bibitem{Mao} L.X. Mao, {\em Cotorsion pairs and approximation classes over formal triangular matrix rings}, J. Pure Appl. Algebra \textbf{224} (6) (2020), 106271, 21 pp.
		\bibitem{Mao2}L. Mao, {\em Cotorsion pairs and Hovey triples over trivial ring extensions}, Comm. Algebra, \textbf{52} (1) (2024), 148–171.
		
		
		\bibitem{M}N. Marmaridis, {\em On Extensions of Abelian Categories with Applications to Ring Theory}, J. Algebra \textbf{150} (1993), 50-64.
		
		\bibitem{M2}N. Marmaridis, {\em Comma categories in representation theory}, Comm. Algebra \textbf{11}
		(1983), 1919–1943.
		
		\bibitem{O}S. Odaba\c{s}{\i}, {\em Completeness of the induced cotorsion pairs in categories of quiver representations}, J. Pure Appl. Algebra \textbf{223} (10) (2019), 4536-4559.
		
		\bibitem{S}L. Salce, {\em Cotorsion theory for abelian groups}, Symp. Math. \textbf{23} (1972), 12-32.
		
		\bibitem{Z}P. Zhang, {\em Monomorphism categories, cotilting theory, and Gorenstein-projective modules}, J. Algebra \textbf{339} (2011), 180-202.
	\end{thebibliography}
\end{document}